\newcommand{\RR}{{\mathbb R}}
\newcommand{\bS}{{\mathbb S}}
\newcommand{\N}{\mathbb{N}}
\newcommand{\K}{\mathbf{K}}
\newcommand{\bg}{\mathbf{g}}
\newcommand{\bA}{\mathbf{A}}
\def\bx{{\boldsymbol{x}}}
\def\bw{{\boldsymbol{w}}}
\def\bv{{\boldsymbol{v}}}
\def\bu{{\boldsymbol{u}}}
\newcommand{\bX}{\mathbf{X}}
\def\bX{{\boldsymbol{X}}}
\def\bA{{\boldsymbol{A}}}
\def\bu{{\boldsymbol{u}}}
\def\bv{{\boldsymbol{v}}}
\def\bg{{\boldsymbol{\gamma}}}
\newcommand{\sos}{\mbox{\upshape\tiny sos}}
\newcommand{\sps}{\mbox{\upshape\tiny sps}}
\newcommand{\tr}[1]{\mbox{\upshape tr}\left(#1\right)}
\def\ba{{\boldsymbol{\alpha}}}
\def\bb{{\boldsymbol{\beta}}}
\newcommand{\mH}{\mathcal{H}}
\newcommand{\wt}[1]{\widetilde{#1}}
\newcommand{\ve}{\text{\upshape vec}}
\newcommand{\snd}[1]{\vert\N^n_{#1}\vert}
\newcommand{\nul}{\text{\upshape Null}}
\newif\ifcomment
\setlist[enumerate]{leftmargin=.5in}
\setlist[itemize]{leftmargin=.5in}
\crefname{hypothesis}{Hypothesis}{Hypotheses}
\title{Non-SOS Positivstellens\"atze for semi-algebraic sets defined by polynomial matrix inequalities\thanks{Submitted to the editors DATE.
\funding{Feng Guo was supported by the Chinese National Natural Science Foundation under grant 12471478.}}}
\author{Feng Guo\thanks{School of Mathematical Sciences, Dalian University of Technology, Dalian 116024, Liaoning Province, China (\email{fguo@dlut.edu.cn}).}}
\DeclareMathOperator{\diag}{diag}
\begin{document}

\maketitle

% REQUIRED
\begin{abstract}
This paper establishes new Positivstellens\"atze for polynomials that are positive on sets 
defined by polynomial matrix inequalities (PMIs). 
We extend the classical Handelman and Krivine–Stengle theorems from the scalar inequality 
setting to the matrix context, deriving explicit certificate forms that do not rely on 
sums-of-squares (SOS). 
Specifically, we show that under certain conditions,
any polynomial positive on a PMI-defined semialgebraic set
admits a representation using Kronecker powers of the defining matrix (or its dilated form)  
with positive semidefinite coefficient matrices.
Under correlative sparsity pattern, we further prove more efficient, 
sparse representations that significantly reduce computational complexity.
By applying these results to polynomial optimization with PMI constraints, 
we construct a hierarchy of semidefinite programming relaxations whose size depends only on 
the dimension of the constraint matrix, and not on the number of variables.
Consequently, our relaxations may remain computationally feasible for problems 
with large number of variables and low-dimensional matrix constraints, 
offering a practical alternative where the traditional SOS-based relaxations become intractable.
\end{abstract}

% REQUIRED
\begin{keywords}
Positivstellens\"atze, polynomial matrix inequality, Handelman Positivstellensatz, Krivine–Stengle Positivstellensatz, correlative sparsity, semidefinite programming, polynomial optimization
\end{keywords}

% REQUIRED
\begin{MSCcodes}
90C23, 90C22, 90C26, 14P10, 13J30
\end{MSCcodes}

\section{Introduction}

Positivstellens\"atze are fundamental results in real algebraic geometry, 
offering constructive certificates for the positivity of a polynomial over a semialgebraic set
\cite{realAG,Laurent_sumsof,PPSOSMarshall,ScheidererSurvey}. 
These powerful results have widespread applications in areas like optimization, algebraic geometry, control theory, and more where polynomial positivity is a key concern \cite{SOCAGbook,HKL2020,Lasserre2015,Laurent_sumsof,Nie2023}.

Typically, Positivstellens\"atze characterize a polynomial 
$f(x)\in\RR[\bx]:=\RR[x_1,\ldots,x_n]$ that is positive on a basic semialgebraic set
\begin{equation}\label{eq::set1}
\{\bx\in\RR^n \mid g_j(\bx)\ge 0, j=1,\ldots,s\},
\end{equation}
where $g_1,\ldots,g_s\in\RR[\bx]$.
Many such results achieve this by expressing the polynomial using
sums of squares (SOS) structures. For instance,
under the assumption that the set \eqref{eq::set1} is compact, 
Schm\"udgen \cite{Schmugen1991} showed that any polynomial $f$ positive on the set \eqref{eq::set1}
belongs to the preordering generated by $g_j$'s--that is, it can be written as an SOS-weighted 
combination of cross-products of $g_j$'s. 
Putinar’s Positivstellensatz \cite{Putinar1993} provides an alternative representation that avoids 
the need for cross-products of $g_j$'s under the Archimedean condition (slightly stronger than compactness). 
Building on Putinar’s Positivstellensatz, Lasserre \cite{LasserreGlobal2001,Lasserre09,Lasserre2015} 
introduced the well-known moment-SOS hierarchy of semidefinite programming (SDP) relaxations 
for polynomial optimization. 
Reznick \cite{Reznick1995} proved that after be multiplying by some power of $\sum_{i=1}^n x_i^2$, any 
positive definite form is a sum of even 
powers of linear forms. Putinar and Vasilescu \cite{PV1999} extended Reznick's result to the constrained case where $f$ and $g_j$'s are homogeneous polynomial of even degree.
%Without requiring compactness of the set \eqref{eq::set1}, 
%Krivine-Stengle Positivstellensatz \cite{Krivine1964, Stengle1974} states that a polynomial $f$ is positive 
%over the set \eqref{eq::set1} if and only if $\psi f=1+\phi$ for some $\psi, \phi$ 
%from the preordering generated by  $g_j$'s.

There also exist several non-SOS structured Positivstellens\"atze.
P\'olya \cite{polya} demonstrated that if $f$ is homogeneous and positive on 
$\RR^n_+\setminus\{\mathbf{0}\}$, then 
multiplying $f$ by some power of $\sum_{i=1}^nx_i$, one obtains a polynomial with nonnegative coefficients. 
Dickinson and Povh \cite{DP2015} generalized P\'olya's Positivstellensatz for homogeneous polynomials 
being positive on the intersection of 
$\RR^n_+\setminus\{\mathbf{0}\}$ and the set \eqref{eq::set1}, where $g_j$'s are assumed to be also homogeneous. 
By replacing SOS polynomials by sums of non-negative circuit polynomials 
and sums of arithmetic-geometric mean exponential functions,
non-SOS Schm\"udgen-type and Putinar-type Positivstellens\"atze have been derived in 
\cite{CS2016, DId2017,RVZ2021}. 
When the set \eqref{eq::set1} is a compact polyhedron with non-empty interior, Handelman's Positivstellensatz
\cite{Handelman1988} states
that any polynomial $f$ positive on the set \eqref{eq::set1} can be expressed as
\begin{equation}\label{eq::handelman}
f(\bx)=\sum_{\ba\in\N^s}\lambda_{\ba}g_1(\bx)^{\alpha_1}\cdots g_s(\bx)^{\alpha_s},
\end{equation}
for some (finitely many) nonnegative scalars $\{\lambda_{\ba}\}$. In general, 
assuming that $0\le g_i(\bx)\le 1$ on the set \eqref{eq::set1} for every $i$, and the family 
$\{1, g_1,\ldots, g_s\}$ generates $\RR[\bx]$, Krivine-Stengle's Positivstellensatz 
\cite{Krivine1964, Stengle1974} (see also \cite{Vas2003})
asserts that a positive polynomial $f$ on the set \eqref{eq::set1} 
admits a representation of the form
\begin{equation}\label{eq::KS}
f(\bx)=\sum_{\ba, \bb\in\N^s}\delta_{\ba,\bb}g_1(\bx)^{\alpha_1}(1-g_1(\bx))^{\beta_1}\cdots 
g_s(\bx)^{\alpha_s}(1-g_s(\bx))^{\beta_s},
\end{equation}
for some (finitely many) nonnegative scalars $\{\delta_{\ba,\bb}\}$.  
As an alternative to the moment-SOS relaxations for polynomial optimization,
Lasserre \cite{LasserreLP2002,LasserreLP2005} proposed the linear programming (LP) relaxations 
by leveraging the representations \eqref{eq::handelman} and \eqref{eq::KS}.

Most Positivstellens\"atze could be generalized to the matrix setting. 
A matrix version of Handelman's Positivstellensatz was proposed in \cite{LD2018}
to characterize a symmetric polynomial matrix
%$F(\bx)\in\bS[\bx]^p$ (the set of $p\times p$ symmetric polynomial matrices) 
that is positive definite on the set \eqref{eq::set1}. Using the class of SOS polynomial matrices,
Scherer and Hol \cite{SH2006} developed a matrix analogue of Putinar’s Positivstellensatz 
for polynomial matrices positive definite (PD) on a set defined by a polynomial matrix inequality (PMI):
\begin{equation}\label{eq::K}
    \K:=\{\bx\in\RR^n \mid G(\bx)\succeq 0\},
\end{equation}
where $G$ is a $q\times q$ symmetric polynomial matrix.
Building on this, Dinh et al. \cite{DHL2021} generalized the classical Schm\"udgen, 
Putinar-Vasilescu, and Dickinson–Povh Positivstellens\"atze to the polynomial matrix setting.
Given the projections of two PMI-defined semialgebraic sets, Klep and Nie \cite{KN2020} provided
a matrix Positivstellensatz with lifting polynomials to determine whether one is contained in the other. 
These generalizations have a wide range of applications, particularly in areas 
such as optimal control, systems theory \cite{HL2006,HL2012,ichihara2009optimal,pozdyayev2014atomic,vanantwerp2000tutorial}.

\vskip 5pt
\noindent{\itshape Contributions.}
In this paper, we establish non-SOS Positivstellens\"atze for characterizing 
a polynomial $f(\bx)\in\RR[\bx]$ that is positive on the PMI-defined semialgebraic set $\K$. 
Our results extend the classical Positivstellens\"atze of Handelman \eqref{eq::handelman} 
and  Krivine-Stengle \eqref{eq::KS}--which apply to semialgebraic sets defined by finitely many 
scalar polynomial inequalities--to the matrix-valued setting. 

(i) We first consider the case when $G(\bx)$ is linear in $\bx$. Assuming that 
$\K$ is compact and has nonempty interior, we prove that if $f(\bx)$ is positive on $\K$,
then it has the representation
\begin{equation}\label{eq::linear}
f(\bx)=\lambda_0+\sum_{k=1}^m \left\langle\Lambda_k, G(\bx)^{\otimes k}\right\rangle
\end{equation}
for some $m\in\N$, nonnegative number $\lambda_0$ and ${q^k}\times {q^k}$ positive
semidefinite (PSD) matrices $\Lambda_k$, $k=1,\ldots,m$ (Theorem \ref{th::main1}).

(ii) Then we extend the result to the nonlinear case. Assume that $\K$ is compact and 
$I_q-G(\bx)\succeq 0$ on $\K$ ($I_q$ denotes the $q\times q$ identity matrix), and that
$\RR[\bx]$ can be generated by $\{I_q, G(\bx)\}$ (in the sense of Assumption \ref{assump::1} (ii)),
we prove that if $f(\bx)$ is positive on $\K$, then it can be written as
\begin{equation}\label{eq::nonlinear}
\lambda_0+\sum_{k=1}^m \left\langle\Lambda_k, \wt{G}(\bx)^{\otimes k}\right\rangle\ 
\end{equation}
for some $m\in\N$, nonnegative number $\lambda_0$ and ${(2q)^k}\times {(2q)^k}$ positive
semidefinite matrices $\Lambda_k$, $k=1,\ldots,m$ (Theorem \ref{th::main2}). 
Here, $\wt{G}(\bx)$ denotes the block-diagonal matrix $\diag(G(\bx), I_q-G(\bx))$. 

(iii) Assume that $f(\bx)$ and $G(\bx)$ exhibits the correlative sparsity--i.e., 
$G(\bx)$ is block-diagonal and $\bx$ can be grouped into overlapping subsets 
satisfying running intersection property (Assumption \ref{assump::vs}), 
such that each monomial in $f(\bx)$ and each 
block in $G(\bx)$ depend only on one subset--we derive 
sparse version of the representations \eqref{eq::linear} and \eqref{eq::nonlinear}, that requires
much fewer PSD matrices.

(iv) Applying the representations \eqref{eq::linear} and \eqref{eq::nonlinear} to 
polynomial optimization with PMI constraints, we develop a hierarchy of SDP relaxations and 
analysis their asymptotic behavior and exactness. 
As with the LP relaxations for scalar optimization due to Lasserre \cite{LasserreLP2002,LasserreLP2005},
our SDP relaxations can not be exact in general. In principle, the SOS-based SDP hierarchy derived 
from Scherer–Hol's Positivstellensatz \cite{SH2006} may produce tighter bounds. 
Nevertheless, it is worth noting 
that size of our SDP relaxations depends only on the powers of $q$ and not on
the number $n$ of variables. Consequently,
for the problems with very large $n$ and relatively small $q$ 
that are significantly beyond the capability of the SOS-based SDP relaxation,
solving our SDP relaxations may still yield meaningful lower bounds of the optimal value 
in a reasonable time (see Example \ref{ex::3}).

\vskip 7pt
The rest of this paper is organized as follows.  
We recall some notation and preliminaries in section \ref{sec::pre}.
In Section \ref{sec::pos}, we establish the Positivstellens\"atze \eqref{eq::linear}
and \eqref{eq::nonlinear}, along with their sparse version under the correlative 
sparsity. In Section~\ref{sec::apps}, we apply these results to develop SDP relaxations 
for PMI-constrained polynomial optimization. Conclusions are given in Section~\ref{sec::con}.

\section{Notation and preliminaries}\label{sec::pre}
We collect some notation and basic concepts
which will be used in this paper. We denote by $\bx$
the $n$-tuple of variables $(x_1,\ldots,x_n)$.
The symbol $\N$ (resp., $\RR$, $\RR_+$) denotes
the set of nonnegative integers (resp., real numbers, nonnegative real numbers). 
For positive integer $n\in\N$, denote by $[n]$ the set $\{1,\ldots,n\}$.
Denote by $\RR^q$ (resp. $\RR^{l_1\times l_2}$, $\bS^{q}$, $\bS_+^{q}$, $\bS_{++}^{q}$) 
the $q$-dimensional real vector (resp. $l_1\times l_2$ real matrix, $q\times q$ symmetric real matrix,
$q\times q$ PSD matrix, $q\times q$ PD matrix) space. 
Denote by $\RR^n_+$ the nonnegative orthant of $\RR^n$.
For $\bv\in\RR^q$ (resp., $N\in\RR^{l_1\times l_2}$), the symbol 
$\bv^{\intercal}$ (resp., $N^\intercal$) denotes the transpose of $\bv$ (resp., $N$). 
For a matrix $N\in\RR^{q\times q}$, $\tr{N}$ denotes its trace. 
For two matrices $N_1$ and $N_2$, $N_1\otimes N_2$ denotes the 
Kronecker product of $N_1$ and $N_2$.
For two matrices $N_1$ and $N_2$ of the same size, $\langle N_1, N_2\rangle$ denotes the 
inner product $\tr{N_1^{\intercal}N_2}$ of $N_1$ and $N_2$.
For square matrices $M_1,\ldots,M_s$, the notation $\diag(M_1,\ldots,M_s)$ represents 
the block-diagonal matrix formed by placing $M_1,\ldots,M_s$ along the diagonal.
The notation $I_q$ denotes the $q\times q$ identity matrix.
%and $0_m$ denotes the  $m$-dimensional vector of all zeros.
For any $t\in \RR$, $\lceil t\rceil$ (resp., $\lfloor t\rfloor$) denotes the smallest (resp., largest)
integer that is not smaller (resp., larger) than $t$. 
%For $\bu\in \RR^n$, $\Vert \bu\Vert$ denotes the standard Euclidean norm of $\bu$.
%For $N\in\RR^{l_1\times l_2}$, $\Vert N\Vert$ denotes the spectral norm of $N$.
For a vector $\ba=(\alpha_1,\ldots,\alpha_n)\in\N^n$,
let $\vert\ba\vert=\alpha_1+\cdots+\alpha_n$. 
For a set $A$, we use $\vert A\vert$ to denote its cardinality.
For $k\in\N$, let $\N^n_k\coloneqq\{\ba\in\N^n\mid \vert\ba\vert\le k\}$
and $\snd{k}=\binom{n+k}{k}$ be its cardinality.
For variables $\bx \in \RR^n$ and $\ba\in\N^n$, $\bx^{\ba}$ denotes the monomial
$x_1^{\alpha_1}\cdots x_n^{\alpha_n}$.
Let $\RR[\bx]$ (resp. $\bS[\bx]^q$) denote 
the set of real polynomials (resp. $q\times q$ symmetric real polynomial matrices) in $\bx$.
For $h\in\RR[\bx]$, we denote by $\deg(h)$
its total degree in $\bx$.
For a polynomial matrix $T(\bx)=[T_{ij}(\bx)]$, denote $\deg(T)\coloneqq\max_{i,j}\deg(T_{ij})$.
For $k\in\N$, denote by $\RR[\bx]_k$ (resp., $\bS[\bx]^q_k$) the subset of $\RR[\bx]$ (resp., $\bS[\bx]^q$)
of degree up to $k$.

%\subsection{SOS matrices and positivstellens\"atz for polynomial matrices}\label{subsec::pre}
For a polynomial $f(\bx)\in\RR[\bx]$, if there exist polynomials $f_1(\bx),\ldots,f_t(\bx)$ such that $f(\bx)=\sum_{i=1}^tf_i(\bx)^2$,
then we call $f(\bx)$ an SOS polynomial.
A polynomial matrix $\Sigma(\bx)\in\bS[\bx]^{q}$ is said to be an \emph{SOS matrix} if there exists an $l\times q$ polynomial matrix $T(\bx)$ for some
$l\in\N$ such that $\Sigma(\bx)=T(\bx)^{\intercal}T(\bx)$. For $d\in\N$, denote by $[\bx]_d$
the canonical basis of $\RR[\bx]_d$, i.e.,
\begin{equation}\label{eq::ud}
	[\bx]_d\coloneqq[1,\ x_1,\ x_2,\ \cdots,\ x_n,\ x_1^2,\ x_1x_2,\ \cdots,\
	x_n^d]^{\intercal},
\end{equation}
whose cardinality is $\snd{d}=\binom{n+d}{d}$. 
With $d=\deg(T)$, we can write $T(\bx)$ as
\[
	T(\bx)=Q([\bx]_d\otimes I_q) \text{ with } 
 Q=[Q_1,\ldots,Q_{\snd{d}}],\quad Q_i\in\RR^{l\times q},
\]
where $Q$ is the vector of coefficient matrices of $T(\bx)$ with respect to
$[\bx]_d$. Hence, $\Sigma(\bx)$ is an SOS matrix with respect to $[\bx]_d$ if there
exists some $Q\in\RR^{l\times q\snd{d}}$ satisfying 
\[
	\Sigma(\bx)=T(\bx)^{\intercal}T(\bx)=([\bx]_d\otimes I_q)^{\intercal}(Q^{\intercal}Q)([\bx]_d\otimes I_q). 
\]
We thus have the following results.
\begin{proposition}{\upshape \cite[Lemma 1]{SH2006}}\label{prop::SOSrep}
A polynomial matrix $\Sigma(\bx)\in\bS[\bx]^{q}$ is an SOS matrix with
respect to the monomial basis $[\bx]_d$ if and only if there exists $Z\in\mathbb{S}_+^{q\snd{d}}$ such that  
\[\Sigma(\bx)=([\bx]_d\otimes I_q)^{\intercal} Z ([\bx]_d\otimes I_q).\]
\end{proposition}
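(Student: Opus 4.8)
The plan is to reduce the statement to the elementary linear-algebraic fact that a symmetric matrix is positive semidefinite if and only if it admits a factorization $Q^{\intercal}Q$, and then to transfer this fact through the substitution $T(\bx)=Q([\bx]_d\otimes I_q)$ already introduced just before the proposition. Recall that, by the general definition, $\Sigma(\bx)$ is an SOS matrix when $\Sigma(\bx)=T(\bx)^{\intercal}T(\bx)$ for some polynomial matrix $T$, and being SOS \emph{with respect to} $[\bx]_d$ restricts attention to factors $T$ of degree at most $d$. The proposition asserts that this restricted class is exactly the image of the PSD cone $\mathbb{S}_+^{q\snd{d}}$ under the map $Z\mapsto([\bx]_d\otimes I_q)^{\intercal}Z([\bx]_d\otimes I_q)$, so no algebraic geometry is needed; the content is dimensional bookkeeping together with PSD factorization.

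For the forward implication, suppose $\Sigma(\bx)$ is an SOS matrix with respect to $[\bx]_d$. By the construction preceding the statement there is some $Q\in\RR^{l\times q\snd{d}}$ with
\[
\Sigma(\bx)=([\bx]_d\otimes I_q)^{\intercal}(Q^{\intercal}Q)([\bx]_d\otimes I_q).
\]
Setting $Z=Q^{\intercal}Q$ produces a symmetric matrix of size $q\snd{d}\times q\snd{d}$ which is positive semidefinite, since every Gram matrix $Q^{\intercal}Q$ is positive semidefinite; hence $Z\in\mathbb{S}_+^{q\snd{d}}$ and the desired representation holds at once.

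For the converse, I would invoke the spectral (or Cholesky) decomposition: any $Z\in\mathbb{S}_+^{q\snd{d}}$ can be written as $Z=U\Lambda U^{\intercal}$ with $\Lambda\succeq 0$ diagonal, so $Z=Q^{\intercal}Q$ upon taking $Q=\Lambda^{1/2}U^{\intercal}\in\RR^{q\snd{d}\times q\snd{d}}$. Defining the polynomial matrix $T(\bx):=Q([\bx]_d\otimes I_q)$, which has $q\snd{d}$ rows and, since the entries of $[\bx]_d$ have degree at most $d$, total degree at most $d$, one computes
\[
T(\bx)^{\intercal}T(\bx)=([\bx]_d\otimes I_q)^{\intercal}Q^{\intercal}Q([\bx]_d\otimes I_q)=([\bx]_d\otimes I_q)^{\intercal}Z([\bx]_d\otimes I_q)=\Sigma(\bx),
\]
exhibiting $\Sigma(\bx)$ as an SOS matrix with respect to $[\bx]_d$.

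I do not expect a genuine obstacle here; the only points requiring attention are dimensional. One must check that the Gram matrix $Q^{\intercal}Q$ lands in $\mathbb{S}_+^{q\snd{d}}$, which is forced by $Q$ having exactly $q\snd{d}$ columns through the expansion $T(\bx)=Q([\bx]_d\otimes I_q)$ with $[\bx]_d$ of length $\snd{d}$, and that the reconstructed factor $T(\bx)$ has degree at most $d$ so that it is genuinely expressible over $[\bx]_d$. No uniqueness of $Z$ is needed: although distinct positive semidefinite matrices may yield the same $\Sigma(\bx)$ because the entries of $[\bx]_d\otimes I_q$ satisfy monomial relations, the statement asks only for existence, so a single factorization suffices.
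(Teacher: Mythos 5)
Your proposal is correct and matches the paper's treatment: the paper folds the same argument into the discussion preceding the proposition (writing $T(\bx)=Q([\bx]_d\otimes I_q)$ so that $\Sigma(\bx)=([\bx]_d\otimes I_q)^{\intercal}(Q^{\intercal}Q)([\bx]_d\otimes I_q)$, then identifying $Z=Q^{\intercal}Q$) and cites Scherer--Hol for the formal statement, with the converse being exactly your PSD-factorization step $Z=Q^{\intercal}Q$. Nothing is missing; this is the standard Gram-matrix argument in both cases.
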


%\begin{lemma}\label{prop::submatrix}
%Let $\Sigma(\bx)\in \bS[\bx]^{p}$ be an SOS matrix and $\Sigma_k(\bx)\in\bS[\bx]^{k\times k}$ 
%be a principal submatrix of $\Sigma(\bx)$ whose rows and columns are indexed by $(p_1,\ldots,p_k)$ with
%$1\le p_1<\cdots<p_k\le p$, then $\Sigma_k(\bx)$ is an SOS matrix. 
%\end{lemma}

We recall Scherer-Hol's Positivstellensatz \cite{SH2006} for PMI-defined semialgebraic sets.

\begin{assumption}\label{assump::archi}
{\rm
For the defining matrix $G(\bx)$ of $\K$ in \eqref{eq::K}, there exists $r\in\RR$ and an SOS polynomial matrix 
$\Sigma(\bx)\in\bS[\bx]^{q}$ such that $r^2 - \sum_{i=1}^n x_i^2 - \langle \Sigma(\bx), G(\bx)\rangle$ 
is an SOS.}
\end{assumption}

\begin{theorem}{\upshape \cite[Corollary 1]{SH2006}}\label{th::psatz}
Let Assumption \ref{assump::archi} hold and $f(\bx)\in\RR[\bx]$ be positive definite on $\K$.
Then there exist SOS polynomial $\sigma(\bx)\in\RR[\bx]$ and SOS polynomial matrix
$\Sigma(\bx)\in\bS[\bx]^q$ such that 
\[
f(\bx)=\sigma(\bx)+\left\langle \Sigma(\bx), G(\bx)\right\rangle.
\]
\end{theorem}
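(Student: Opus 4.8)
The plan is to adapt the functional-analytic proof of Putinar's Positivstellensatz to the matrix setting, letting the matrix structure enter only through the support analysis of the representing measure. Introduce the \emph{matrix quadratic module}
\[
M \coloneqq \bigl\{\, \sigma(\bx) + \langle \Sigma(\bx), G(\bx)\rangle \ :\ \sigma\ \text{SOS},\ \Sigma\ \text{an SOS matrix}\,\bigr\},
\]
a convex cone in $\RR[\bx]$; the assertion to be proved is precisely that $f \in M$. The first step is to verify that $M$ is \emph{Archimedean}. Assumption \ref{assump::archi} states that $r^2 - \sum_i x_i^2 - \langle \Sigma, G\rangle$ is SOS for some $r$ and some SOS matrix $\Sigma$, which rearranges to $r^2 - \sum_i x_i^2 \in M$. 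The standard lemma that a quadratic module containing $N - \sum_i x_i^2$ contains some $N' \pm p$ for every $p\in\RR[\bx]$ then shows that $1$ is an order unit for $M$, i.e.\ for each $p$ there is $t>0$ with $1 + t p \in M$. (The same inequality shows $\K$ is compact, since $r^2-\sum_i x_i^2\ge 0$ on $\K$.)

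Next I argue by contradiction: suppose $f \notin M$. Since $1$ is an algebraic interior order unit of the cone $M$, Eidelheit's separation theorem supplies a nonzero linear functional $L:\RR[\bx]\to\RR$ with $L \ge 0$ on $M$, $L(1)=1$, and $L(f)\le 0$. Taking $\Sigma=0$ shows $L$ is nonnegative on all SOS polynomials, while the Archimedean property controls the moments of $L$; by Putinar's representation theorem (equivalently, Haviland's theorem together with the Archimedean moment bound), $L$ is given by a Borel probability measure $\mu$ whose support is contained in the ball $\{\|\bx\|\le r\}$, so that $L(p)=\int p\,d\mu$ for all $p\in\RR[\bx]$.

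The decisive, genuinely matrix-valued step is to show $\supp(\mu)\subseteq\K$. For any fixed vector $c\in\RR^q$ and any $h\in\RR[\bx]$, the rank-one choice $\Sigma(\bx)=h(\bx)^2\,c c^\intercal$ is an SOS matrix by Proposition \ref{prop::SOSrep}, so $\langle \Sigma, G\rangle = h^2\,(c^\intercal G c)\in M$ and hence $\int h^2\,(c^\intercal G(\bx) c)\,d\mu \ge 0$. Because $\mu$ is compactly supported, polynomials are dense in $L^2(\mu)$; if $c^\intercal G c$ were negative on a set of positive $\mu$-measure we could approximate that set's indicator by some $h$ to force $\int h^2\,(c^\intercal G c)\,d\mu<0$, a contradiction. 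Thus $c^\intercal G(\bx) c \ge 0$ $\mu$-a.e.\ for each $c$; ranging $c$ over a countable dense subset of $\RR^q$ and using continuity of $\bx\mapsto G(\bx)$ gives $G(\bx)\succeq 0$ $\mu$-a.e., i.e.\ $\supp(\mu)\subseteq\K$. Since $f$ is strictly positive on the compact set $\K$, we have $f\ge\delta>0$ on $\supp(\mu)$, whence $L(f)=\int f\,d\mu\ge\delta>0$, contradicting $L(f)\le 0$. Therefore $f\in M$, which is exactly the claimed decomposition $f=\sigma+\langle\Sigma,G\rangle$.

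I expect the main obstacle to lie in making the representation step fully rigorous—passing from ``$L\ge 0$ on the Archimedean module $M$'' to ``a compactly supported measure $\mu$'', which genuinely requires the Archimedean moment bounds feeding into Haviland's theorem rather than a one-line citation—together with the density/localization argument that upgrades the scalar inequalities $\int h^2(c^\intercal G c)\,d\mu\ge 0$ to the pointwise semidefiniteness $G\succeq 0$ on $\supp(\mu)$; the rank-one SOS-matrix test is the key device that makes this matrix step reduce to the scalar one.
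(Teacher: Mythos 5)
The paper never proves this theorem: it is quoted directly from Scherer--Hol \cite[Corollary 1]{SH2006} and used as a black box, so there is no in-paper proof to compare against. Judged as a standalone argument, your proposal is essentially correct. It is the classical functional-analytic proof of Putinar's Positivstellensatz transplanted to the matrix quadratic module $M$: Archimedeanity of $M$ from Assumption \ref{assump::archi}, Eidelheit separation of $f$ from $M$, a representing measure supported in a ball, and the rank-one test $\Sigma(\bx)=h(\bx)^2\,cc^{\intercal}$ to force $\supp(\mu)\subseteq\K$. The rank-one device is indeed the correct matrix-specific ingredient, and the argument is not circular, since scalar Putinar (equivalently its dual, moment form) is a prior result. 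It is worth noting that your route also differs in flavor from what this paper does for its \emph{own} non-SOS analogues (Theorems \ref{th::main1} and \ref{th::main2}): there the author likewise separates $f$ from a cone using Theorem \ref{th::sep}, but then avoids representing measures entirely, invoking Krein--Milman together with the extreme-point/multiplicativity analysis of Proposition \ref{prop::extre} to turn the separating functional into a limit of convex combinations of point evaluations on $\K$. Your measure-theoretic step and the paper's Krein--Milman step play exactly the same role, so the two strategies are close cousins; yours buys a shorter path by leaning on the scalar Putinar/Haviland machinery, while the paper's is more self-contained and works for cones (like $\mH(G)$) that contain no SOS structure at all.

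Three points in your sketch deserve tightening, though none is a fatal gap. First, you should verify explicitly that $M$ is a quadratic module, i.e.\ closed under multiplication by arbitrary squares $p^2$: this requires that $p^2\Sigma$ is again an SOS matrix (clear, since $p^2\,T^{\intercal}T=(pT)^{\intercal}(pT)$), and it is needed twice --- once for the ``standard lemma'' that produces the order-unit property from $r^2-\sum_i x_i^2\in M$, and once in the representing-measure step, where boundedness of the GNS multiplication operators requires $L\bigl(p^2(r^2-\sum_i x_i^2)\bigr)\ge 0$ for all $p$, not merely $L\ge 0$ on SOS plus the single membership $r^2-\sum_i x_i^2\in M$. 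Second, the normalization $L(1)=1$ after separation needs the observation that $L(1)=0$ would force $L\equiv 0$ (from $k\pm p\in M$), which is precisely the role Lemma \ref{lem::L} plays in the paper's parallel arguments. Third, in the localization step, passing to the limit in $\int h_k^2\,(c^{\intercal}G c)\,d\mu$ uses that $c^{\intercal}G(\bx)c$ is bounded on the compact support of $\mu$; this is why compactness of the support (from the Archimedean bound) must be secured before, not after, the localization. All three repairs are routine.
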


\section{Non-SOS Positivstellens\"atze for PMI constraints}\label{sec::pos}
In this section, we characterize polynomials positive on the PMI-defined set $\K$ 
by constructing a novel polynomial cone using Kronecker powers of the defining matrix $G(\bx)$
(or its dilated form)  with PSD coefficient matrices. 
Our results extend the classical Positivstellens\"atze of Handelman \eqref{eq::handelman}
and  Krivine-Stengle \eqref{eq::KS} from scalar polynomial inequalities to 
the matrix-valued setting.
Our proof strategy, inspired by \cite{Vas2003}, involves adapting the scalar framework to 
the matrix setting and integrating our novel cone construction.

\subsection{The linear case}
In this part, we assume that the defining matrix $G(\bx)$ of $\K$ is linear in $\bx$, 
i.e., there exist  matrices $A_0, \ldots, A_n\in\bS^q$, such that 
\begin{equation}
    \K=\left\{\bx\in\RR^n\ \Big|\ G(\bx)=A_0+\sum_{i=1}^n A_ix_i\succeq 0\right\}.
\end{equation}

Recall the polar of the convex set $\K$ defined as follows 
\[
\K^{\circ}=\{\bu\in\RR^n \mid \bu^\intercal \bx\le 1,\ \forall \bx\in\K\}.
\]
\begin{proposition}\label{prop::polar}
    Assume that $\mathbf{0}$ is an interior of $\K$, then 
    \[
    \K^{\circ}=\{-(\langle U, A_1\rangle, \ldots, \langle U, A_n\rangle) \mid 
    U\in\bS^q_+,\ \langle U, A_0\rangle\le 1\}.
    \]
\end{proposition}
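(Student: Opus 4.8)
The plan is to read the two descriptions of $\K^\circ$ as the primal and dual sides of one semidefinite program and to extract the identity from weak and strong conic duality. Write $S$ for the set on the right-hand side. For a fixed $\bu\in\RR^n$, membership $\bu\in\K^\circ$ is exactly the statement that the optimal value
\[ p^*(\bu)=\sup\{\bu^\intercal\bx \mid G(\bx)\succeq 0\} \]
is at most $1$. Attaching a PSD multiplier $U$ to the constraint $G(\bx)\succeq0$, the Lagrangian dual of this SDP is
\[ d^*(\bu)=\inf\{\langle U,A_0\rangle \mid U\in\bS^q_+,\ \langle U,A_i\rangle=-u_i,\ i\in[n]\}, \]
because the inner supremum over $\bx$ is finite only when $u_i+\langle U,A_i\rangle=0$ for every $i$, in which case it equals $\langle U,A_0\rangle$. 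A matrix $U$ is dual-feasible with $\langle U,A_0\rangle\le1$ precisely when $-(\langle U,A_1\rangle,\dots,\langle U,A_n\rangle)$ lies in $S$, so the proposition reduces to showing that $p^*(\bu)\le1$ if and only if such a $U$ exists.

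The inclusion $S\subseteq\K^\circ$ is just weak duality. For any $\bx\in\K$ and any dual-feasible $U$ one has $\langle U,A_0\rangle-\bu^\intercal\bx=\langle U,A_0+\sum_i x_iA_i\rangle=\langle U,G(\bx)\rangle\ge0$, since the trace inner product of two PSD matrices is nonnegative; hence $\bu^\intercal\bx\le\langle U,A_0\rangle\le1$ for all $\bx\in\K$, i.e. $\bu\in\K^\circ$. The reverse inclusion $\K^\circ\subseteq S$ is the substance: given $\bu$ with $p^*(\bu)\le1$ I must produce an \emph{actual} $U\in\bS^q_+$ meeting the dual constraints with $\langle U,A_0\rangle\le1$. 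This is where strong duality together with attainment of the dual infimum enters, and it is the step I expect to be the main obstacle.

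To secure a zero duality gap with dual attainment I would use the interior hypothesis as a Slater condition, after a normalization. First I would reduce to the nondegenerate case: if $N$ is the common kernel of $A_0,\dots,A_n$, then $\mathbf{0}\in\operatorname{int}\K$ forces $\ker A_0\subseteq N$. Indeed, any $v\in\ker A_0$ satisfies $v^\intercal G(\pm t\be_i)v=\pm t\,v^\intercal A_iv\ge0$ for small $t$, so $v^\intercal A_iv=0$, whence $A_iv=0$ by positive semidefiniteness of $G(\pm t\be_i)$, giving $G(\bx)v\equiv0$. Restricting to $N^\perp$ (which changes neither $\K$ nor $S$) therefore makes $G(\mathbf{0})=A_0$ positive definite, so the primal SDP is strictly feasible at $\bx=\mathbf{0}$. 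Since $p^*(\bu)\le1$ is finite, standard conic-duality theory then yields $p^*(\bu)=d^*(\bu)$ with the dual optimum attained; the optimal $U\in\bS^q_+$ satisfies $\langle U,A_i\rangle=-u_i$ and $\langle U,A_0\rangle=p^*(\bu)\le1$, i.e. $\bu\in S$, completing the proof.

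The delicate points, and the reason the interior assumption cannot be dropped, are exactly that Slater's condition is what guarantees a vanishing duality gap and that the dual value is achieved rather than merely approached; the kernel reduction is needed because when $A_0$ is singular the dual feasible directions can be unbounded and the image under $U\mapsto-(\langle U,A_i\rangle)_{i}$ need not be closed. As an alternative route one can instead verify directly that $S$ is closed and convex and that its polar $S^\circ$ equals $\K$ (the computation showing $\bx\in S^\circ\iff G(\bx)\succeq0$ is short, testing with rank-one $U=t\,vv^\intercal$), so that the bipolar theorem gives $\K^\circ=(S^\circ)^\circ=S$; closedness of $S$ there rests on the same nondegeneracy and interior input.
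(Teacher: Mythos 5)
Your proof is correct, and it takes a genuinely different route from the paper's. The paper disposes of this proposition in one line: it notes that interiority of $\mathbf{0}$ gives $A_0\succeq 0$ and that the affine hull of $\K$ is all of $\RR^n$, and then cites a known characterization of the polar of a spectrahedron (Theorem 2 of the reference [RG1995] by Ramana and Goldman). You instead give a self-contained conic-duality argument: weak duality yields the inclusion of the right-hand set $S$ into $\K^{\circ}$, and for the converse you restore Slater's condition by factoring out the common kernel $N$ of $A_0,\ldots,A_n$ before invoking strong duality with dual attainment. Your kernel-reduction lemma is the key original ingredient and it is sound: for $v\in\ker A_0$, interiority of $\mathbf{0}$ gives $G(\pm t\be_i)\succeq 0$ for small $t$, hence $v^{\intercal}A_iv=0$, hence $G(t\be_i)v=0$ (a PSD matrix annihilates any vector on which its quadratic form vanishes), hence $A_iv=0$; this is exactly what is needed, since $\mathbf{0}\in\operatorname{int}\K$ does \emph{not} by itself make $A_0$ positive definite, and the compression to $N^{\perp}$ indeed changes neither $\K$ nor $S$. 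What the paper's citation buys is brevity; what your argument buys is transparency—it makes visible precisely where the interiority hypothesis enters (once in the kernel reduction, once as strict feasibility), in effect reproving the Ramana--Goldman result in the case needed here. Your closing bipolar-theorem alternative is also viable in outline, but as you yourself note it hinges on closedness of $S$, which you only sketch; the duality route you carry out in full is the one that stands on its own.
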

\begin{proof}
    Since $\mathbf{0}$ is an interior of $\K$, we have $A_0\succeq 0$ and the affine hull of $\K$ is 
    $\RR^n$. Then, the conclusion follows from \cite[Theorem 2]{RG1995}.
\end{proof}

For any $k\in\N$ and $U\in\bS^q$, 
denote by $U^{\otimes k}$ the $k$-times Kronecker product of $U$, i.e., 
\[
U^{\otimes k}=\underbrace{U\otimes U \otimes \cdots \otimes U}_{k\ \text{times}}.
\]
\begin{assumption}\label{assump::0}
    The set $\K$ is compact and has nonempty interior.
\end{assumption}
\begin{proposition}\label{prop::main}
Suppose that Assumption \ref{assump::0} holds and $\bu$ is an interior of $\K$. 
    Let $\bb(\bx)=\beta_0+\sum_{i=1}^n\beta_i x_i$, $\beta_i\in\RR$. 
    \begin{enumerate}
       \item[\upshape (i)] If $\bb(\bx)\ge 0$ on $\K$, then there exist $U\in\bS^q_+$ and $0\le \gamma\le \bb(\bu)$ such that  
       $\bb(\bx)=\langle U, G(\bx)\rangle+\gamma$;
       \item[\upshape (ii)] There exists $U\in\bS^q_{++}$ such that $1=\langle U, G(\bx)\rangle$;
       \item[\upshape (iii)] For any $k\in\N$, there exists $U\in\bS^{q^k}_{++}$ such that 
       $1=\left\langle U, G(\bx)^{\otimes k}\right\rangle$;
       \item[\upshape (iv)] For each $i\in[n]$, there exists $U_i\in\bS^q$ such that $x_i=\langle U_i, G(\bx)\rangle$.
    \end{enumerate}
\end{proposition}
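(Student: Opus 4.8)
The plan is to derive all four items from two structural consequences of Assumption~\ref{assump::0}: compactness forces the coefficient matrices to be nondegenerate, while nonempty interior supplies a strict-feasibility point $G(\bu)\succ 0$ that activates the relevant duality. I would first record the compactness facts, since they feed everything else. Boundedness of $\K$ is equivalent to the recession cone $\{\bd\in\RR^n \mid \sum_{i=1}^n d_i A_i\succeq 0\}$ being $\{\mathbf{0}\}$. From this I would extract: (a) $A_1,\ldots,A_n$ are linearly independent and $\mathcal{V}:=\mathrm{span}\{A_1,\ldots,A_n\}$ meets $\bS^q_+$ only at $0$ (any PSD element of $\mathcal{V}$ is a recession direction, hence zero); and (b) $A_0\notin\mathcal{V}$, since $A_0=\sum_i c_iA_i$ would give $G(\bx)=\sum_i A_i(x_i+c_i)$ and force $\K$ to be the single point $-\bc$, contradicting nonempty interior.

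Fact (b) shows $A_0,A_1,\ldots,A_n$ are linearly independent, so the linear map $U\mapsto(\langle U,A_0\rangle,\ldots,\langle U,A_n\rangle)$ from $\bS^q$ to $\RR^{n+1}$ is surjective; choosing for each $i$ a preimage $U_i$ of the vector with $\langle U_i,A_0\rangle=0$ and $\langle U_i,A_j\rangle=\delta_{ij}$ gives $x_i=\langle U_i,G(\bx)\rangle$, proving (iv). For (ii) I would combine fact (a) with a separation argument: since $\mathcal{V}$ is disjoint from the compact slice $\{X\in\bS^q_+\mid \tr{X}=1\}$, strict separation yields $W\in\mathcal{V}^{\perp}$ with $\langle W,X\rangle>0$ on that slice, i.e. $\lambda_{\min}(W)>0$, so $W\succ 0$. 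Because $\langle W,G(\bu)\rangle=\langle W,A_0\rangle>0$, normalizing $U=W/\langle W,A_0\rangle\in\bS^q_{++}$ gives $\langle U,A_i\rangle=0$ for $i\geq 1$ and $\langle U,A_0\rangle=1$, hence $\langle U,G(\bx)\rangle\equiv 1$.

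Item (iii) is then a one-line consequence of (ii): taking the same $U$ and using multiplicativity of the trace inner product under Kronecker products, $\langle U^{\otimes k},G(\bx)^{\otimes k}\rangle=\langle U,G(\bx)\rangle^k\equiv 1$, while $U^{\otimes k}\in\bS^{q^k}_{++}$ as a Kronecker power of a positive definite matrix.

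Finally, (i) is the analytic heart, and I would reduce it to the polar description in Proposition~\ref{prop::polar} after translating the interior point to the origin. Set $G'(\by)=G(\by+\bu)=G(\bu)+\sum_i A_i y_i$, so that $\mathbf{0}$ is interior to $\K'=\K-\bu$ and $\bb$ becomes $\bb'(\by)=\bb(\bu)+\sum_i\beta_i y_i\geq 0$ on $\K'$. If $\bb(\bu)=0$, then $\bb'$ attains its minimum at an interior point and must be constant, forcing $\bb\equiv 0$ trivially. Otherwise $\bb(\bu)>0$, and $\bb'\geq 0$ on $\K'$ rewrites as $\sum_i(-\beta_i/\bb(\bu))y_i\leq 1$, placing $(-\beta_1/\bb(\bu),\ldots,-\beta_n/\bb(\bu))$ in $(\K')^{\circ}$. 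Proposition~\ref{prop::polar} then produces $U'\in\bS^q_+$ with $\langle U',A_i\rangle=\beta_i/\bb(\bu)$ for $i\in[n]$ and $\langle U',A_0\rangle\leq 1$; scaling $U=\bb(\bu)\,U'$ and setting $\gamma=\bb(\bu)-\langle U,A_0\rangle\geq 0$ matches every coefficient, so $\bb(\bx)=\langle U,G(\bx)\rangle+\gamma$, with $\langle U,G'(\by)\rangle=\langle U,G(\bx)\rangle$ returning to the original coordinates. The bound $\gamma\leq\bb(\bu)$ is then automatic from $\bb(\bu)=\langle U,G(\bu)\rangle+\gamma$ and $\langle U,G(\bu)\rangle\geq 0$. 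The main obstacle is exactly this item: getting the separation/polar duality and the translation bookkeeping right, and confirming that compactness is precisely what guarantees the nondegeneracy (trivial recession cone, independence of the $A_i$, and $A_0\notin\mathcal{V}$) used throughout.
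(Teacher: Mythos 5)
Your proposal is correct, but it reaches items (ii) and (iv) by a genuinely different route than the paper. The paper makes item (i) the workhorse: it proves (ii) by applying (i) to $1-\varepsilon\langle I_q, G(\bx)\rangle$ (compactness supplies $\varepsilon$), and proves (iv) by applying (i) to $x_i+M_i$ and then subtracting a constant via (ii). You instead extract structural nondegeneracy facts from compactness and nonempty interior --- trivial recession cone, hence $\mathrm{span}\{A_1,\ldots,A_n\}\cap\bS^q_+=\{0\}$ and linear independence of $A_0,A_1,\ldots,A_n$ --- and then get (iv) by pure linear algebra (surjectivity of $U\mapsto(\langle U,A_0\rangle,\ldots,\langle U,A_n\rangle)$) and (ii) by strictly separating the subspace $\mathrm{span}\{A_1,\ldots,A_n\}$ from the trace-one slice of the PSD cone to produce $W\succ 0$ orthogonal to every $A_i$, $i\in[n]$. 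Your item (i) coincides with the paper's (translation to the origin, case split on $\bb(\bu)=0$, then Proposition~\ref{prop::polar}), and (iii) is the same Kronecker-power computation. What each approach buys: the paper's is more economical, reusing (i) and the polar proposition throughout, whereas yours isolates the geometric content of Assumption~\ref{assump::0} and makes (ii) and (iv) independent of the polar duality, which is arguably more transparent. Two small points to tighten: in (ii) the strict inequality $\langle W, A_0\rangle>0$ needs the observation that $G(\bu)\neq 0$ --- which follows from your own fact (b), since $G(\bu)=0$ would put $A_0$ in $\mathrm{span}\{A_1,\ldots,A_n\}$ --- and in (i) your formula $\gamma=\bb(\bu)-\langle U,A_0\rangle$ should read $\gamma=\bb(\bu)-\langle U,G(\bu)\rangle$, since in the translated coordinates the constant coefficient matrix is $G(\bu)$, not $A_0$ (your subsequent sentence uses the correct identity, so this is only a notational slip).
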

\begin{proof}
    (i) Without loss of generality, we may assume that $\bu=\mathbf{0}$.
    Since $\bb(\bx)\ge 0$ on $\K$ and $\mathbf{0}\in\K$, we have $\beta_0\ge 0$. If $\beta_0=0$, then $\bb(\bx)$ would be identically 
    zero since $\mathbf{0}$ is an interior of $\K$, and there would be 
    nothing to do. We may thus assume that $\beta_0>0$. As for all $\bx\in\K$,
    \[
    \bb(\bx)=\beta_0\left(1+\sum_{i=1}^n\frac{\beta_i}{\beta_0}x_i\right)\ge 0,
    \]
    we have $-(\beta_1/\beta_0,\ldots,\beta_n/\beta_0)\in \K^{\circ}$.
    By Proposition \ref{prop::polar}, there exists $U'\in\bS^q_+$ such that 
       $\beta_i/\beta_0=\langle U', A_i\rangle$, $i\in[n]$, and 
       $\langle U', A_0\rangle\le 1$. Thus,
       \[
       \bb(\bx)=\beta_0\left(1+\sum_{i=1}^n \langle U', A_i\rangle x_i\right)
       =\langle \beta_0 U', G(\bx)\rangle + \beta_0(1 - \langle U', A_0\rangle).
       \]
The conclusion follows by letting $U=\beta_0 U'$ and $\gamma=\beta_0(1 - \langle U', A_0\rangle)$.

    (ii) Since $\K$ is compact, there exists $\varepsilon>0$ such that 
    $1-\varepsilon\langle I_q, G(\bx)\rangle\ge 0$ on $\K$. By (i), there
    $U'\in\bS^q_+$ and $0\le \gamma\le 1-\varepsilon \langle I_q, G(\bu)\rangle$ such that  
    \[
    1-\varepsilon\langle I_q, G(\bx)\rangle=\langle U', G(\bx)\rangle+\gamma.
    \]
    Then, it holds $1-\gamma=\langle \varepsilon I_q+U', G(\bx)\rangle$. 
    Since $\bu$ is an interior of $\K$, we must have $\langle I_q, G(\bu)\rangle>0$ and 
    thus $\gamma<1$. Then, the conclusion follows by letting $U=(\varepsilon I_q+U')/(1-\gamma)$.

    (iii) By (ii), there exists $U'\in\bS^q_{++}$ such that $1=\langle U', G(\bx)\rangle$. 
    For any $k\in\N$, let $U=U'^{\otimes k}$, then $U\succ 0$ and 
    \[
    \begin{aligned}
    \left\langle U, G(\bx)^{\otimes k}\right\rangle
    &=\tr{U'^{\otimes k}G(\bx)^{\otimes k}}\\
    &=\tr{\left(U'^{\otimes k-1}G(\bx)^{\otimes k-1}\right) \otimes (U'G(\bx))}\\
    &=\tr{(U'^{\otimes k-1}G(\bx)^{\otimes k-1}}\tr{U'G(\bx)}\\
    &=\left\langle U'^{\otimes k-1}, G(\bx)^{\otimes k-1}\right\rangle
    \left\langle U', G(\bx)\right\rangle\\
    &=\left\langle U'^{\otimes k-1}, G(\bx)^{\otimes k-1}\right\rangle
    \end{aligned}
    \]
Then, the conclusion follows by induction on $k$.

    (iv) Fix an $i\in[n]$. Since $\K$ is compact, there exists positive 
    $M_i\in\RR$ such that $x_i\ge -M_i$ for all $\bx\in\K$. Then, by (i),
    there exists $U'\in\bS^q_+$ and $0\le \gamma\le M_i$ such that  
    $x_i+M_i=\langle U', G(\bx)\rangle+\gamma$. By (ii), there exists $U''\in\bS^q$ such that  
    $\gamma-M_i=\langle U'', G(\bx)\rangle$. Thus, 
    \[
    x_i=(x_i+M_i) - M_i=\langle U'+U'', G(\bx)\rangle.
    \]
    The conclusion follows by letting $U=U'+U''$.
\end{proof}

Let 
\[
\mH(G):=\left\{\lambda_0+\sum_{k=1}^m \left\langle\Lambda_k, G(\bx)^{\otimes k}\right\rangle\ \Big|\ m\in\N,\ \lambda_0\in\RR_+,\ \Lambda_k\in\bS^{q^k}_+,\ k\in[m]\right\}\subset\RR[\bx].
\]
Clearly, if $f\in\mH(G)$, then $f(\bx)\ge 0$ on $\K$. 
\begin{proposition}\label{prop::closed}
The cone $\mH(G)$ is closed under addition and multiplication.
\end{proposition}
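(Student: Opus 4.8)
The plan is to check the two closure properties in turn. Closure under addition will be essentially immediate, while closure under multiplication will reduce to a single Kronecker-product identity together with the fact that the PSD cone is preserved under Kronecker products.

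For addition, take two elements $f=\lambda_0+\sum_{k=1}^{m}\langle\Lambda_k,G(\bx)^{\otimes k}\rangle$ and $g=\mu_0+\sum_{k=1}^{m'}\langle M_k,G(\bx)^{\otimes k}\rangle$ of $\mH(G)$. I would pad the shorter sum with zero coefficient matrices (trivially PSD) so that both sums run to $m''=\max\{m,m'\}$, whence $f+g=(\lambda_0+\mu_0)+\sum_{k=1}^{m''}\langle\Lambda_k+M_k,G(\bx)^{\otimes k}\rangle$. Since $\lambda_0+\mu_0\ge 0$ and $\Lambda_k+M_k\in\bS^{q^k}_+$ (the PSD cone being closed under addition), this displays $f+g$ as a member of $\mH(G)$.

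For multiplication, the core step is the identity
\[
\left\langle\Lambda_j,G(\bx)^{\otimes j}\right\rangle\left\langle M_k,G(\bx)^{\otimes k}\right\rangle=\left\langle\Lambda_j\otimes M_k,G(\bx)^{\otimes(j+k)}\right\rangle,
\]
which I would establish by following the same trace manipulations as in Proposition \ref{prop::main}(iii): rewrite both factors as traces, apply $\tr{N_1}\tr{N_2}=\tr{N_1\otimes N_2}$, and then use the mixed-product rule $(A\otimes B)(C\otimes D)=(AC)\otimes(BD)$ together with $G(\bx)^{\otimes j}\otimes G(\bx)^{\otimes k}=G(\bx)^{\otimes(j+k)}$. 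The decisive positivity input is that the Kronecker product of two PSD matrices is again PSD, so $\Lambda_j\otimes M_k\in\bS^{q^{j+k}}_+$ whenever $\Lambda_j$ and $M_k$ are PSD.

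With this identity available, I would expand the product $fg$ term by term. The term $\lambda_0\mu_0$ is a nonnegative scalar; each cross term $\lambda_0\langle M_k,G(\bx)^{\otimes k}\rangle$ and $\mu_0\langle\Lambda_j,G(\bx)^{\otimes j}\rangle$ retains a PSD coefficient because scaling by a nonnegative number preserves the PSD cone; and each genuine product of two inner products becomes $\langle\Lambda_j\otimes M_k,G(\bx)^{\otimes(j+k)}\rangle$ by the identity above. Collecting the contributions by the common Kronecker power $l$ and summing the corresponding $q^l\times q^l$ coefficient matrices (which again stays within the PSD cone) exhibits $fg$ in the required form, so $fg\in\mH(G)$. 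I do not anticipate a genuine obstacle here; the only point needing care is the index bookkeeping, since a fixed power $l$ collects contributions from every pair $(j,k)$ with $j+k=l$ as well as from the scalar-times-term products.
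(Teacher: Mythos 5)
Your proof is correct and takes essentially the same route as the paper's: the key identity $\left\langle\Lambda_j, G(\bx)^{\otimes j}\right\rangle\left\langle M_k, G(\bx)^{\otimes k}\right\rangle=\left\langle\Lambda_j\otimes M_k, G(\bx)^{\otimes (j+k)}\right\rangle$, obtained by the very same trace and mixed-product manipulations, combined with the fact that the Kronecker product of PSD matrices is PSD. The only difference is that you spell out the padding for addition and the collection of terms by total Kronecker power, which the paper leaves implicit.
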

\begin{proof}
It is obvious that $f+g\in\mH(G)$ if $f, g\in\mH(G)$. To see $f\cdot g\in \mH(G)$, 
    notice that for any $k, \ell\in\N$, it holds
    \[
    \begin{aligned}
    \left\langle\Lambda_k, G(\bx)^{\otimes k}\right\rangle
    \left\langle\Lambda_{\ell}, G(\bx)^{\otimes \ell}\right\rangle
    &=\tr{\Lambda_k G(\bx)^{\otimes k}}\tr{\Lambda_{\ell}G(\bx)^{\otimes \ell}}\\
    &=\tr{\left(\Lambda_k G(\bx)^{\otimes k}\right)\otimes \left(\Lambda_{\ell}G(\bx)^{\otimes \ell}\right)}\\
    &=\tr{\left(\Lambda_k \otimes \Lambda_{\ell} \right)\left(G(\bx)^{\otimes k}\otimes G(\bx)^{\otimes \ell}\right)}\\
    &=\left\langle\Lambda_k\otimes \Lambda_{\ell}, G(\bx)^{\otimes k+\ell}\right\rangle,
    \end{aligned}
    \]
    and $\Lambda_k\otimes \Lambda_{\ell}\in \bS^{q^{k+\ell}}_+$ for any $\Lambda_k\in\bS^{q^k}_+$
    and $\Lambda_{\ell}\in\bS^{q^{\ell}}_+$.
\end{proof}

Moreover, we have 
\begin{proposition}\label{prop::pc}
    If Assumption \ref{assump::0} holds, then for any 
    $h\in\RR[\bx]$, there exist $h_1, h_2\in\mH(G)$ such that 
    $h=h_1-h_2$; that is $\RR[\bx]=\mH(G)-\mH(G)$.
%In addition, if $\K$ contains interior, then  $\mH(G)\cap -\mH(G)=\{0\}$. 
\end{proposition}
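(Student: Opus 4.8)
The plan is to recognize that the statement is equivalent to showing that the difference set $\mH(G)-\mH(G)$, which is \emph{a priori} only a subset of $\RR[\bx]$, is actually all of $\RR[\bx]$. My strategy is to prove that $\mH(G)-\mH(G)$ is a unital $\RR$-subalgebra of $\RR[\bx]$ containing each coordinate function $x_i$; since $1,x_1,\ldots,x_n$ generate $\RR[\bx]$ as an algebra, this forces $\mH(G)-\mH(G)=\RR[\bx]$.

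First I would check that $\mH(G)-\mH(G)$ is a linear subspace. Closure under addition is immediate. For scalar multiplication, observe that $\mH(G)$ is a convex cone: scaling $\lambda_0$ and every $\Lambda_k$ by a factor $c\ge 0$ preserves nonnegativity and positive semidefiniteness, so $c\,\mH(G)\subseteq\mH(G)$ for $c\ge 0$; hence for any real $c$ the element $c(h_1-h_2)$ equals either $(ch_1)-(ch_2)$ or $(|c|h_2)-(|c|h_1)$, both in $\mH(G)-\mH(G)$. Next I would show closure under multiplication: for $a=h_1-h_2$ and $b=g_1-g_2$ with $h_i,g_i\in\mH(G)$, expanding gives $ab=(h_1g_1+h_2g_2)-(h_1g_2+h_2g_1)$, and Proposition \ref{prop::closed} (closure of $\mH(G)$ under addition and multiplication) places both parenthesized terms back in $\mH(G)$. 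Together with $1\in\mH(G)$ (take $\lambda_0=1$, or invoke Proposition \ref{prop::main}(ii)), this shows $\mH(G)-\mH(G)$ is a unital subalgebra.

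The crux is to place each $x_i$ in $\mH(G)-\mH(G)$. Proposition \ref{prop::main}(iv) supplies $U_i\in\bS^q$ with $x_i=\langle U_i, G(\bx)\rangle$, but $U_i$ is merely symmetric, so this expression need not lie in $\mH(G)$ directly. The one genuine step is to decompose $U_i$ via its spectral (Jordan) decomposition as $U_i=U_i^+-U_i^-$ with $U_i^+,U_i^-\in\bS^q_+$, whence $x_i=\langle U_i^+, G(\bx)\rangle-\langle U_i^-, G(\bx)\rangle$ is a difference of two members of $\mH(G)$ (each of the form $m=1$, $\lambda_0=0$, $\Lambda_1=U_i^\pm$). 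Having now shown that the unital subalgebra $\mH(G)-\mH(G)$ contains $1$ and all of $x_1,\ldots,x_n$, I conclude it equals $\RR[\bx]$.

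The only nonformal point is this symmetric-versus-PSD gap in the last step, and it is handled by the standard positive/negative part splitting of a symmetric matrix; everything else is bookkeeping. I would also flag where Assumption \ref{assump::0} is used: it is not invoked directly here but is exactly what underwrites Proposition \ref{prop::main}(iv), whose proof uses compactness (to bound each $x_i$ from below on $\K$) and the nonempty-interior hypothesis (through part (i)), so the hypothesis enters solely through that ingredient.
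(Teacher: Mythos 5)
Your proof is correct, and it rests on the same two pillars as the paper's: Proposition \ref{prop::main}(iv) to write $x_i=\langle U_i, G(\bx)\rangle$, and the positive/negative splitting of a symmetric matrix into a difference of PSD matrices. The organization differs, though, in where the splitting happens and how you scale up from degree one to general polynomials. The paper works monomial by monomial: it forms $\bx^{\ba}=\left\langle U_1^{\otimes\alpha_1}\otimes\cdots\otimes U_n^{\otimes\alpha_n},\, G(\bx)^{\otimes|\ba|}\right\rangle$ explicitly via the Kronecker-trace identity, and then decomposes the single large symmetric matrix $U_1^{\otimes\alpha_1}\otimes\cdots\otimes U_n^{\otimes\alpha_n}\in\bS^{q^{|\ba|}}$ as $U^+-V^+$ with $U^+,V^+\succeq 0$; linearity of $\mH(G)-\mH(G)$ then finishes the job. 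You instead split only the $n$ degree-one matrices $U_i=U_i^+-U_i^-$, and delegate all higher-degree bookkeeping to the ring structure: you verify that $\mH(G)-\mH(G)$ is a unital subalgebra (multiplicative closure coming from Proposition \ref{prop::closed}, which the paper's proof never needs to cite) and invoke the fact that $1,x_1,\ldots,x_n$ generate $\RR[\bx]$. Your route is more modular -- the Kronecker computations are hidden inside Proposition \ref{prop::closed} rather than repeated inline -- while the paper's route is more explicit and, as a byproduct, produces a concrete representation of each monomial that is reused verbatim in the proof of Proposition \ref{prop::unperforated} (where the undecomposed matrix $U_1^{\otimes\alpha_1}\otimes\cdots\otimes U_n^{\otimes\alpha_n}$ is dominated by a positive definite $U$); your version would not directly supply that later ingredient. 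Your closing remark about where Assumption \ref{assump::0} enters is accurate.
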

\begin{proof}

    Now suppose that Assumption \ref{assump::0} holds. To prove $\RR[\bx]=\mH(G)-\mH(G)$,
    without loss of generality,  it suffices to prove that $\bx^\ba\in \mH(G)-\mH(G)$ 
    for any $\bx^\ba=x_1^{\alpha_1}\cdots x_n^{\alpha_n}$, $\ba\in\N^n$.
   By Proposition \ref{prop::main} (iv), there exists $U_i\in\bS^q$ such
   that $x_i=\langle U_i, G(\bx)\rangle$. Then, for each $i\in[n]$ and $\alpha_i\in\N$,
   \[
   x_i^{\alpha_i}=\left\langle U_i^{\otimes{\alpha_i}}, G(\bx)^{\otimes{\alpha_i}}\right\rangle,
   \]
   and 
   \[
   \bx^{\ba}=x_1^{\alpha_1}\cdots x_n^{\alpha_n}
   =\left\langle U_1^{\otimes{\alpha_1}}\otimes \cdots\otimes U_n^{\otimes{\alpha_n}}, 
   G(\bx)^{\otimes{|\ba|}}\right\rangle.
   \]
   Decompose $U_1^{\otimes{\alpha_1}}\otimes \cdots\otimes U_n^{\otimes{\alpha_n}}=U^+-V^+$,
   where $U^+, V^+\in\bS_+^{q^{|\ba|}}$. Then, 
   \[
   \bx^{\ba}
   =\left\langle U^+,  G(\bx)^{\otimes{|\ba|}}\right\rangle - 
   \left\langle V^+,  G(\bx)^{\otimes{|\ba|}}\right\rangle\in\mH(G)-\mH(G).
   \]
   The conclusion follows.
%    Since $f(\bx)\ge 0$ on $\K$ for any $f\in\mH(G)$, we have $f\equiv 0$ on $\K$ if $f\in \mH(G)\cap -\mH(G)$.
%    Hence, if $\K$ contains interior, this implies that $f(\bx)$ is identically zero.
\end{proof}

%By Proposition \ref{prop::pc}, if $\K$ contains interior, then the cone $\mH(G)$ imposes 
%the structure of a partially ordered ring on $\RR[\bx]$ defined by $f\le g$ if and only if $g-f\in\mH(G)$. 

%\begin{definition}
%An element $\bu$ of a partially ordered abelian group $\mG$ is an order
%unit if it is positive, and for all elements $\bv$ of $\mG$, there exists 
%a positive integer $a$ such that $\bv\le a\bu$. The partially ordered 
%group $\mG$ is unperforated if for all $\bv\in\mG$, if there is a positive 
%integer $a$ such that $a\bv\ge 0$, then $\bv\ge 0$.
%\end{definition}

\begin{definition}
    Let $V$ be a vector space and let $A\subset V$ be a convex set. We 
    say that $\bv\in A$ lies in the algebraic interior of $A$ if for any 
    straight line $\ell$ passing through $\bv$, the point $\bv$ lies in 
    the interior of the intersection $A\cap\ell$. 
\end{definition}

\begin{theorem}\cite[III, Corollary 1.7]{ACC}\label{th::sep}
    Let $V$ be a vector space and let $A, B\subset V$ be non-empty convex sets such that $A\cap B=\emptyset$. Suppose that $A$ has 
    a non-empty algebraic interior. Then, $A$ and $B$ can be separated 
    by an affine hyperplane.
\end{theorem}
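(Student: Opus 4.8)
The plan is to derive the separation from the Hahn--Banach extension theorem by way of a Minkowski gauge, which is the standard route in a purely algebraic (topology-free) setting. First I would replace the pair $(A,B)$ by a single convex set: put $C := A - B = \{a - b \mid a \in A,\ b \in B\}$. Convexity of $A$ and $B$ gives convexity of $C$; the disjointness $A \cap B = \emptyset$ gives $\mathbf{0} \notin C$; and if $a_0$ lies in the algebraic interior of $A$, then, fixing any $b_0 \in B$, the point $c_0 := a_0 - b_0$ lies in the algebraic interior of $C$ (every line through $c_0$ is, after translation by $b_0$, a line through $a_0$ meeting $A$ in an interval about $a_0$). Separating $A$ from $B$ by an affine hyperplane is then equivalent to producing a nonzero linear functional $\phi$ with $\phi(c) \le 0$ for all $c \in C$, i.e.\ separating $C$ from the origin.

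Next I would move the algebraic interior to the origin by setting $D := C - c_0$, so that $\mathbf{0}$ is in the algebraic interior of $D$ while the point $x_0 := -c_0$ satisfies $x_0 \notin D$ (this is exactly $\mathbf{0} \notin C$). Having $\mathbf{0}$ in the algebraic interior makes $D$ absorbing, so the Minkowski gauge $p(x) := \inf\{t > 0 \mid x/t \in D\}$ is finite on all of $V$, and convexity of $D$ makes $p$ sublinear. The key convexity computation is that $\mathbf{0} \in D$ together with convexity gives $tD \subseteq D$ for $t \in [0,1]$, so that $x_0 \notin D$ forces $p(x_0) \ge 1$.

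I would then apply Hahn--Banach. On the line $\RR x_0$ define $\ell(t x_0) := t\, p(x_0)$; one checks $\ell \le p$ there, since $\ell = p$ for $t \ge 0$ and $\ell \le 0 \le p$ for $t < 0$. Extending $\ell$ to a linear functional $\phi$ on $V$ dominated by $p$ gives $\phi(x) \le p(x) \le 1$ for $x \in D$, while $\phi(x_0) = p(x_0) \ge 1$; in particular $\phi \ne 0$ and $\phi$ separates $D$ from $x_0$. Undoing the two translations (by $c_0$, then rewriting $C = A - B$) turns this into a nonzero $\phi$ and a scalar $\gamma$ with $\phi(a) \le \gamma \le \phi(b)$ for all $a \in A$ and $b \in B$, so $\{\phi = \gamma\}$ is the desired separating hyperplane.

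The main obstacle I anticipate is the extension step: in a general vector space carrying no topology, Hahn--Banach must be used in its algebraic (dominated-extension) form, which rests on the axiom of choice through Zorn's lemma. All of the substantive content preceding it is carried by the hypothesis that $A$ has nonempty algebraic interior — it is precisely this that forces $D$ to be absorbing, hence the gauge $p$ to be finite and sublinear, and that yields $p(x_0) \ge 1$. Establishing these gauge properties without borrowing any topological structure is where the genuine care lies.
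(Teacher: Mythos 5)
The paper never proves this statement: it is imported verbatim from the reference \cite{ACC} (Chapter III, Corollary 1.7) and used as a black box in the proof of Theorem \ref{th::main1}, so there is no in-paper argument to compare yours against. What you have written is a correct, self-contained proof of the cited result, by the classical topology-free separation argument: pass to $C=A-B$, so that $A\cap B=\emptyset$ becomes $\mathbf{0}\notin C$ and a core point $a_0$ of $A$ yields the core point $c_0=a_0-b_0$ of $C$ (this step is sound because algebraic-interior points of the subset $A-b_0$ remain algebraic-interior points of the superset $C$); translate $c_0$ to the origin so that $D=C-c_0$ is absorbing, making the Minkowski gauge $p$ finite and sublinear; use $\mathbf{0}\in D$ and convexity to get $tD\subseteq D$ for $t\in[0,1]$, hence $p(x_0)\ge 1$ for $x_0=-c_0\notin D$; and extend the functional $t x_0\mapsto t\,p(x_0)$ from the line $\RR x_0$ by the algebraic Hahn--Banach dominated-extension theorem, where your verification of domination on the line (equality for $t\ge 0$, and $\le 0\le p$ for $t<0$) is exactly the point that needs checking. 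Undoing the translations gives a nonzero $\phi$ with $\phi(a)\le\phi(b)$ for all $a\in A$, $b\in B$; since $B\neq\emptyset$ the supremum $\sup_A\phi$ is finite, and any $\gamma$ between $\sup_A\phi$ and $\inf_B\phi$ furnishes the separating hyperplane $\{\phi=\gamma\}$, which is the notion of separation the quoted statement asserts. The only caveat worth recording is the one you flagged yourself: in a bare vector space the extension step genuinely requires the Zorn's-lemma form of Hahn--Banach, so the proof is nonconstructive; this is unavoidable and does not affect its validity or its use in Theorem \ref{th::main1}.
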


\begin{proposition}\label{prop::unperforated}
    Suppose that Assumption \ref{assump::0} holds.
    Then, $1$ lies in the algebraic interior of the convex set 
    $\mH(G)$.
\end{proposition}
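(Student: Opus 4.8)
The plan is to unfold the definition of algebraic interior directly. A straight line through $1$ is exactly a set of the form $\{1+th\mid t\in\RR\}$ for some direction $h\in\RR[\bx]$, so I must show that for each such $h$ there is $\epsilon>0$ with $1+th\in\mH(G)$ for all $t\in(-\epsilon,\epsilon)$. Since $1\in\mH(G)$ (take $\lambda_0=1$ and all $\Lambda_k=0$, or use Proposition \ref{prop::main}(iii) with $k=1$) and $\mH(G)$ is convex by Proposition \ref{prop::closed}, it suffices to produce, for an arbitrary direction $g\in\RR[\bx]$, some $\epsilon_g>0$ with $1+\epsilon_g g\in\mH(G)$: applying this to $g=h$ and to $g=-h$ and invoking convexity then fills out a full interval around $1$ on the line, which is the interior of the intersection relative to the line.

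First I would write an arbitrary $g$ in terms of Kronecker powers of $G$. By Proposition \ref{prop::main}(iv) each variable satisfies $x_i=\langle U_i,G(\bx)\rangle$ for some $U_i\in\bS^q$, so taking Kronecker products exactly as in the proof of Proposition \ref{prop::pc} expresses every monomial of total degree $k\ge 1$ as $\langle\,\cdot\,,G(\bx)^{\otimes k}\rangle$ for a symmetric, generally \emph{indefinite}, coefficient matrix. Collecting terms by total degree yields a representation $g=c_0+\sum_{k=1}^m\langle C_k,G(\bx)^{\otimes k}\rangle$ with $c_0\in\RR$ the constant term and $C_k\in\bS^{q^k}$. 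The obstruction to membership in $\mH(G)$ is precisely that the $C_k$ need not be positive semidefinite.

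The key device is to dominate the indefinite $C_k$ using the strictly positive definite certificates of $1$ supplied by Proposition \ref{prop::main}(iii): for each $k$ there is $U_k\in\bS^{q^k}_{++}$ with $\langle U_k,G(\bx)^{\otimes k}\rangle\equiv 1$. Choose $\mu_k>0$ with $U_k\succeq\mu_k I$ and $\rho_k\ge 0$ with $C_k\succeq-\rho_k I$, and set $\delta_k=\epsilon\rho_k/\mu_k$. Then $\Lambda_k:=\delta_k U_k+\epsilon C_k\succeq(\delta_k\mu_k-\epsilon\rho_k)I=0$, so each $\Lambda_k$ is PSD, while $\langle U_k,G^{\otimes k}\rangle\equiv 1$ gives $\langle\Lambda_k,G(\bx)^{\otimes k}\rangle=\delta_k+\epsilon\langle C_k,G(\bx)^{\otimes k}\rangle$. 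Summing over $k$ and adding back a scalar, I obtain $1+\epsilon g=\lambda_0+\sum_{k=1}^m\langle\Lambda_k,G(\bx)^{\otimes k}\rangle$ with $\lambda_0=1+\epsilon c_0-\sum_{k=1}^m\delta_k$.

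It remains to ensure $\lambda_0\ge 0$. Since each $\delta_k$ is a fixed multiple of $\epsilon$, we have $\sum_k\delta_k=\epsilon D$ with $D=\sum_{k=1}^m\rho_k/\mu_k$ independent of $\epsilon$, so $\lambda_0=1-\epsilon(D-c_0)$, which is nonnegative as soon as $\epsilon$ is taken small enough. For such $\epsilon$ all coefficient matrices $\Lambda_k$ are PSD and $\lambda_0\ge 0$, hence $1+\epsilon g\in\mH(G)$, and the reduction in the first paragraph completes the proof. The main obstacle is exactly the passage from ``$g$ is representable with indefinite coefficient matrices'' to ``$1+\epsilon g$ is representable with PSD coefficient matrices''; this is what the definite certificates $U_k$ of $1$ buy us, by permitting a positive-definite shift that both absorbs the sign of $C_k$ and perturbs the constant term only by a controlled amount, the boundedness being guaranteed by the compactness hypothesis underlying Proposition \ref{prop::main}.
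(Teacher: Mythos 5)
Your proof is correct and follows essentially the same route as the paper's: both rest on the positive definite certificate $1=\left\langle U, G(\bx)^{\otimes k}\right\rangle$ from Proposition \ref{prop::main}(iii) and the Kronecker-power representation of monomials from Proposition \ref{prop::main}(iv), absorbing the indefinite coefficient matrices by a positive definite shift whose effect on the constant term is controlled. The only difference is bookkeeping --- the paper argues monomial-by-monomial (showing $a\pm\bx^{\ba}\in\mH(G)$ and reducing the general case via convexity), whereas you collect terms by total degree and track the constant $\lambda_0$ explicitly.
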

\begin{proof}
By definition, we need to prove that for any $f\in\RR[\bx]$, there 
exists $\varepsilon_f>0$ such that $1+\lambda f\in\mH(G)$ for all 
$|\lambda|<\varepsilon_f$. Without loss of generality, 
it suffices to assume that $f$ is a monomial.

Fixing an $f=\bx^\ba=x_1^{\alpha_1}\cdots x_n^{\alpha_n}$, $\ba\in\N^n$,
   we prove that there exists positive integer $a$ such that 
   $a\pm \bx^{\ba}\in\mH(G)$. 
   As proved in Proposition \ref{prop::pc},
%   For each $i\in[n]$, 
%   by Proposition \ref{prop::main} (iv), 
there exists $U_i\in\bS^q$, $i\in[n]$, such that 
%$x_i=\langle U_i, G(\bx)\rangle$. Then, for each $i\in[n]$,
%   \[
%   x_i^{\alpha_i}=\left\langle U_i^{\otimes{\alpha_i}}, G(\bx)^{\otimes{\alpha_i}}\right\rangle,
%   \]
%   and 
   \[
   \bx^{\ba}=\left\langle U_1^{\otimes{\alpha_1}}\otimes \cdots\otimes U_n^{\otimes{\alpha_n}}, 
   G(\bx)^{\otimes{|\ba|}}\right\rangle.
   \]
   By Proposition \ref{prop::main} (iii), there exists $U\in\bS^{q^{|\ba|}}_{++}$ such that 
   $1=\left\langle U, G(\bx)^{\otimes |\ba|}\right\rangle$.
   As $U$ is positive definite, there exists a positive integer $a$ such that
   $a U\pm U_1^{\otimes{\alpha_1}}\otimes \cdots\otimes U_n^{\otimes{\alpha_n}}\succeq 0$. 
   Hence,
   \[
   a\cdot 1\pm \bx^{\ba}=\left\langle aU\pm U_1^{\otimes{\alpha_1}}\otimes \cdots\otimes U_n^{\otimes{\alpha_n}}, 
   G(\bx)^{\otimes{|\ba|}}\right\rangle\in\mH(G).
   \]
   Then, $1+\lambda \bx^{\ba}\in\mH(G)$ for all 
$|\lambda|<1/a$. By letting $\varepsilon_f=1/a$, the proof is complete.
\end{proof}

Denote by $\pi(\mH(G))$ the set of all linear functionals $L : \RR[\bx]\to \RR$
such that $L(1)=1$ and $L(f)\ge 0$ for all $f\in\mH(G)$.  
Clearly, $\pi(\mH(G))$ is a convex set. 
We can identify each $L\in\pi(\mH(G))$ with the point $(L(\bx^{\ba}))_{\ba\in\N^n}\in\RR^{\N^n}$. 
Hence, we regard 
$\pi(\mH(G))$ as a subset of $\RR^{\N^n}$, which we endow with the 
product topology, i.e., the topology of pointwise convergence. Then,
we have
\begin{lemma}
    If $1$ is an algebraic interior point of $\mH(G)$, 
    then $\pi(\mH(G))$ is compact. 
\end{lemma}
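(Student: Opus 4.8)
The plan is to realize $\pi(\mH(G))$ as a closed subset of a compact product of intervals and then invoke Tychonoff's theorem. First I would record that for each fixed $f\in\RR[\bx]$, written as $f=\sum_{\ba}c_{\ba}\bx^{\ba}$ with only finitely many nonzero coefficients $c_{\ba}$, the evaluation $L\mapsto L(f)=\sum_{\ba}c_{\ba}L(\bx^{\ba})$ is a \emph{finite} linear combination of the coordinate projections on $\RR^{\N^n}$, hence continuous for the product topology (topology of pointwise convergence). Consequently, the single coordinate equation $L(1)=L(\bx^{\mathbf 0})=1$ defines a closed set, and for every $f\in\mH(G)$ the condition $L(f)\ge 0$ also defines a closed set. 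Their intersection is exactly $\pi(\mH(G))$, so $\pi(\mH(G))$ is closed in $\RR^{\N^n}$.

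Next I would extract uniform (in $L$) coordinatewise bounds from the hypothesis that $1$ is an algebraic interior point of $\mH(G)$. Applying the definition of algebraic interior to the straight line through $1$ in the direction of a monomial $\bx^{\ba}$, there is some $\varepsilon_{\ba}>0$ such that $1+\lambda\bx^{\ba}\in\mH(G)$ for all $|\lambda|<\varepsilon_{\ba}$ (this is precisely the reformulation used in the proof of Proposition \ref{prop::unperforated}). Hence for every $L\in\pi(\mH(G))$ and every such $\lambda$ we have $1+\lambda L(\bx^{\ba})=L(1+\lambda\bx^{\ba})\ge 0$; letting $\lambda\to\pm\varepsilon_{\ba}$ and using continuity in $\lambda$ gives $1\pm\varepsilon_{\ba}L(\bx^{\ba})\ge 0$, so that $|L(\bx^{\ba})|\le 1/\varepsilon_{\ba}=:a_{\ba}$. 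Crucially, $a_{\ba}$ depends only on the monomial $\bx^{\ba}$ and not on $L$.

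It then follows that $\pi(\mH(G))\subseteq\prod_{\ba\in\N^n}[-a_{\ba},\,a_{\ba}]$, which is compact in the product topology by Tychonoff's theorem. A closed subset of a compact space is compact, so $\pi(\mH(G))$ is compact, which completes the argument.

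The only nonroutine ingredient is the passage from the algebraic interior hypothesis to the \emph{uniform} bounds $a_{\ba}$; the closedness and the Tychonoff step are the standard closed-subset-of-a-product argument. I do not expect a genuine obstacle here, but the point requiring care is that the $\varepsilon_{\ba}$ produced by the algebraic interior property depends only on the direction $\bx^{\ba}$, which is exactly what makes the bound on $L(\bx^{\ba})$ independent of $L$ and hence what confines $\pi(\mH(G))$ to a fixed compact box.
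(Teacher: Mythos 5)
Your proof is correct and follows essentially the same route as the paper: both establish that $\pi(\mH(G))$ is closed in the product topology, use the algebraic interior property at $1$ to obtain coordinatewise bounds $|L(\bx^{\ba})|$ that are uniform in $L$, and conclude compactness via Tychonoff's theorem. The only cosmetic differences are that you argue closedness via continuity of the finitely-supported evaluation maps rather than by pointwise convergence of sequences, and you express the bound as $1/\varepsilon_{\ba}$ instead of the paper's integer $N_{\ba}$ with $N_{\ba}\pm\bx^{\ba}\in\mH(G)$; these are equivalent.
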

\begin{proof}
    Suppose that $\{L_k\}_{k\in\N}\subset\pi(\mH(G))$ and 
    $\lim_{k\to\infty} L_k=L_{\infty}$. Then, by the pointwise convergence, 
    we have $L_{\infty}(1)=1$ and $L_{\infty}(f)\ge 0$ for all $f\in\mH(G)$. 
    Then, $L_{\infty}\in\pi(\mH(G))$ and thus $\pi(\mH(G))$ is closed.
    Since $1$ is an algebraic interior point of $\mH(G)$, for any $\ba\in\N^n$, 
    there exists a positive integer $N_{\ba}$ such that $N_{\ba}\pm\bx^{\ba}\in\mH(G)$.
    Hence, $|L(\bx^{\ba})|\le N_{\ba}$ for any  $L\in\pi(\mH(G))$ and $\ba\in\N^n$.  
    Thus, $\pi(\mH(G))$ is pointwise bounded and hence compact
    by Tychonoff's Theorem. 
\end{proof}

\begin{lemma}\label{lem::L}
   Suppose that $\RR[\bx]=\mH(G)-\mH(G)$ and $1$ is an algebraic interior point of $\mH(G)$.
   If a linear functional $L : \RR[\bx]\to \RR$ satisfies that $L(1)=0$ and $L(h)\ge 0$ 
   for all $h\in\mH(G)$, then $L=0$.
\end{lemma}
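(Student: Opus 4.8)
The plan is to show directly that $L$ annihilates every polynomial, by exploiting the algebraic interior hypothesis to pin $L(f)$ between a nonnegative and a nonpositive value. Fix an arbitrary $f\in\RR[\bx]$; it suffices to prove $L(f)=0$. The engine of the argument is the property that $1$ is an algebraic interior point of $\mH(G)$, which is exactly the ingredient that makes a one-sided positivity condition on $L$ collapse to a two-sided vanishing condition.

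First I would invoke the definition of algebraic interior along the line determined by $f$. Consider the straight line $\ell=\{1+tf\mid t\in\RR\}$ passing through the point $1$. Since $1$ lies in the algebraic interior of the convex set $\mH(G)$, it lies in the interior of the intersection $\mH(G)\cap\ell$, so there exists a single $t_0>0$ for which both $1+t_0 f\in\mH(G)$ and $1-t_0 f\in\mH(G)$. This is precisely the two-sided membership that was extracted, for monomials and hence for all $f$, in the proof of Proposition \ref{prop::unperforated} (there phrased as $a\pm\bx^{\ba}\in\mH(G)$).

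Next I would apply the functional $L$ to both members and use its hypotheses. By linearity together with $L(1)=0$, we get $L(1+t_0 f)=t_0 L(f)$ and $L(1-t_0 f)=-t_0 L(f)$. Since $L$ is nonnegative on $\mH(G)$, both of these quantities are nonnegative, so $t_0 L(f)\ge 0$ and $-t_0 L(f)\ge 0$. As $t_0>0$, this forces simultaneously $L(f)\ge 0$ and $L(f)\le 0$, whence $L(f)=0$. Because $f$ was arbitrary, $L=0$.

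I do not expect a genuine obstacle here; the one point deserving care is the correct reading of the algebraic interior definition, namely that it yields two-sided membership $1\pm t_0 f\in\mH(G)$ for one common $t_0>0$ rather than two separate scalars. It is worth noting that the decomposition hypothesis $\RR[\bx]=\mH(G)-\mH(G)$ is not actually invoked in this implication: the algebraic interior property already supplies, for each $f$, the decomposition $2t_0 f=(1+t_0 f)-(1-t_0 f)\in\mH(G)-\mH(G)$, so the first hypothesis is implied by the second and serves mainly to align this lemma with the surrounding development.
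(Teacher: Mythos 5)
Your proof is correct, and it takes a slightly different route from the paper's. The paper first invokes the hypothesis $\RR[\bx]=\mH(G)-\mH(G)$ to reduce to proving $L(f)=0$ for $f\in\mH(G)$; there, $L(f)\ge 0$ comes for free from the positivity of $L$ on $\mH(G)$, and only the \emph{one-sided} consequence of algebraic interiority is needed, namely that $N_f-f\in\mH(G)$ for some integer $N_f$, giving $0\le L(N_f-f)=-L(f)$. You instead work with an arbitrary $f\in\RR[\bx]$ and read the algebraic-interior definition along the line $\{1+tf\mid t\in\RR\}$ to extract the \emph{two-sided} membership $1\pm t_0 f\in\mH(G)$ for a common $t_0>0$ (which is indeed what the definition gives, since $\mH(G)\cap\ell$ is an interval containing $1$ in its relative interior; this is also exactly how Proposition \ref{prop::unperforated} is phrased and proved). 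Applying $L$ to both then pins $L(f)$ between $0$ and $0$. What your approach buys is the observation, which you state explicitly and which is accurate, that the decomposition hypothesis $\RR[\bx]=\mH(G)-\mH(G)$ is redundant for this lemma: the algebraic interior property alone already yields $2t_0 f=(1+t_0f)-(1-t_0f)\in\mH(G)-\mH(G)$ for every $f$. What the paper's argument buys is that it only uses the weaker one-sided containment, at the price of carrying the decomposition hypothesis. The only point you gloss over is the degenerate direction $f=0$, where the "line" collapses to a point, but there $L(f)=0$ holds trivially by linearity.
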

\begin{proof}
    As $\RR[\bx]=\mH(G)-\mH(G)$,
    we only need to prove $L=0$ on $\mH(G)$. Fix an arbitrary $f\in\mH(G)$.
    Then $L(f)\ge 0$.
    Since $1$ is an algebraic interior point of $\mH(G)$, 
    there exists a positive integer $N_f$ such that $N_f-f\in\mH(G)$. Hence,
   \[
   0\le L(N_f-f)=N_f L(1)-L(f)=-L(f),
   \]
   which shows that $L(f)=0$.
\end{proof}

\begin{proposition}\label{prop::extre}
Suppose that $\RR[\bx]=\mH(G)-\mH(G)$ and $1$ is an algebraic interior point of $\mH(G)$. 
Then,  for any extreme point $L$ of $\pi(\mH(G))$, there exists 
     $\bu\in\K$ such that $L(f)=f(\bu)$ for all $f\in\RR[\bx]$.
\end{proposition}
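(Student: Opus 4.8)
The plan is to show that every extreme point $L$ of $\pi(\mH(G))$ is \emph{multiplicative}, that is, $L(fg)=L(f)L(g)$ for all $f,g\in\RR[\bx]$. Once this is established, $L$ is an $\RR$-algebra homomorphism, hence point evaluation at $\bu:=(L(x_1),\dots,L(x_n))$, and a short positivity check places $\bu$ in $\K$.

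First I would fix $g\in\mH(G)$ and define the linear functional $L_g(p):=L(gp)$ on $\RR[\bx]$. Because $\mH(G)$ is closed under multiplication (Proposition \ref{prop::closed}), $gp\in\mH(G)$ whenever $p\in\mH(G)$, so $L_g\ge 0$ on $\mH(G)$. Since $1$ is an algebraic interior point of $\mH(G)$, there is $\varepsilon>0$ with $\varepsilon g\in\mH(G)$ and $1-\varepsilon g\in\mH(G)$. Writing $L=L_{\varepsilon g}+L_{1-\varepsilon g}$ and noting $L_{\varepsilon g}(1)+L_{1-\varepsilon g}(1)=L(1)=1$, I obtain a representation of $L$ as a convex combination of the two normalized functionals $L_{\varepsilon g}/L(\varepsilon g)$ and $L_{1-\varepsilon g}/L(1-\varepsilon g)$ in $\pi(\mH(G))$, provided both coefficients are positive. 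Extremality of $L$ then forces each normalized functional to equal $L$, which gives $L(\varepsilon g\,p)=L(\varepsilon g)\,L(p)$, hence $L(gp)=L(g)L(p)$ for all $p$. The degenerate cases are handled directly: if $L_{\varepsilon g}(1)=0$, then Lemma \ref{lem::L} (applicable since $\RR[\bx]=\mH(G)-\mH(G)$ and $1$ is an algebraic interior point) yields $L_{\varepsilon g}\equiv 0$, and symmetrically when $L_{1-\varepsilon g}(1)=0$, so the identity $L(gp)=L(g)L(p)$ persists in every case.

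Next I would extend multiplicativity from $g\in\mH(G)$ to all of $\RR[\bx]$. Given arbitrary $f\in\RR[\bx]$, write $f=f_1-f_2$ with $f_1,f_2\in\mH(G)$ (possible since $\RR[\bx]=\mH(G)-\mH(G)$); then for every $p\in\RR[\bx]$,
\[
L(fp)=L(f_1p)-L(f_2p)=L(f_1)L(p)-L(f_2)L(p)=L(f)L(p),
\]
so $L$ is multiplicative on all of $\RR[\bx]$. Together with $L(1)=1$ this makes $L$ an algebra homomorphism, so setting $\bu=(L(x_1),\dots,L(x_n))$ yields $L(\bx^\ba)=\bu^\ba$ for every $\ba\in\N^n$ and hence $L(f)=f(\bu)$ for all $f\in\RR[\bx]$. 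Finally, for any $\bv\in\RR^q$ the polynomial $\bv^\intercal G(\bx)\bv=\langle \bv\bv^\intercal, G(\bx)\rangle$ lies in $\mH(G)$ (take $m=1$, $\lambda_0=0$, $\Lambda_1=\bv\bv^\intercal\succeq 0$), so $\bv^\intercal G(\bu)\bv=L(\bv^\intercal G(\bx)\bv)\ge 0$; as $\bv$ was arbitrary, $G(\bu)\succeq 0$ and thus $\bu\in\K$.

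I expect the multiplicativity step to be the crux. The main care lies in the bookkeeping of the two degenerate cases and in verifying that $L_g$ is genuinely admissible, namely nonnegative on $\mH(G)$ and finite on every polynomial; both hinge on $\mH(G)$ being multiplicatively closed and on $1$ being an algebraic interior point. The remaining steps---extending to $\RR[\bx]$, recognizing the homomorphism as an evaluation, and checking feasibility $G(\bu)\succeq 0$---are routine.
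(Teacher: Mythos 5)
Your proposal is correct and follows essentially the same route as the paper's proof: establish multiplicativity of the extreme point $L$ by splitting it into two nonnegative functionals via multiplication by an element of $\mH(G)$ and its complement, invoke extremality in the nondegenerate case and Lemma \ref{lem::L} in the degenerate cases, then extend by linearity using $\RR[\bx]=\mH(G)-\mH(G)$ and verify $G(\bu)\succeq 0$ exactly as the paper does. The only cosmetic difference is your normalization (decomposing $1=\varepsilon g+(1-\varepsilon g)$ with small $\varepsilon$, rather than the paper's $N_f = f + (N_f-f)$ with a large constant $N_f$), which is the same trick up to rescaling.
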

\begin{proof}
   We first prove that $L$ is multiplicative on $\RR[\bx]$. That is, $L(fg)=L(f)L(g)$
   for any $f, g\in\RR[\bx]$. As $\RR[\bx]=\mH(G)-\mH(G)$,
   it suffices to assume that $f, g\in\mH(G)$. 
   Fixing arbitrary $f, g\in\mH(G)$, let us prove $L(fg)=L(f)L(g)$. 
   Since $1$ is an algebraic interior point of $\mH(G)$, 
   there exists a positive integer $N_f$ such that $N_f-f\in\mH(G)$. 
   Thus, $0\le L(f)\le L(N_f)=N_f$.

   If $0<L(f)<N_f$, define two linear functionals $L_1$ and $L_2$ on $\RR[\bx]$ as follows
   \[
   L_1(h)=\frac{L(fh)}{L(f)},\quad 
   L_2(h)=\frac{L((N_f-f)h)}{N_f-L(f)}, \ \ \forall h\in\RR[\bx]. 
   \]
   As $N_f-f\in\mH(G)$, we have $L_1, L_2\in\pi(\mH(G))$. Observe that
   \[
   L=\frac{L(f)}{N_f}L_1 + \frac{N_f-L(f)}{N_f}L_2,\quad 
  \frac{L(f)}{N_f}+\frac{N_f-L(f)}{N_f}=1. 
   \]
   As $L$ is an extreme point of $\pi(\mH(G))$, we must have 
   $L=L_1$. Therefore, we have 
   \[
   L(g)=L_1(g)=\frac{L(fg)}{L(f)},
   \]
   which implies $L(fg)=L(f)L(g)$.

   If $L(f)=0$, let $\wt{L}$ be the linear functional on $\RR[\bx]$ such that $\wt{L}(h)=L(fh)$ for all $h\in\RR[\bx]$.
   Then, $\wt{L}(1)=0$ and $\wt{L}(h)\ge 0$ for all $h\in\mH(G)$.
   By Lemma \ref{lem::L}, we have $\wt{L}=0$ and hence 
   \[
   L(fg)=\wt{L}(g)=0=L(f)L(g).
   \]

   If $L(f)=N_f$, let $\wt{L}$ be the linear functional on $\RR[\bx]$ such that $\wt{L}(h)=L((N_f-f)h)$ for all $h\in\RR[\bx]$. Then, repeating the above arguments shows that
   $\wt{L}=0$ and hence,
   \[
   N_fL(g)-L(fg)=L((N_f-f)g)=\wt{L}(g)=0=(N_f-L(f))L(g)=N_fL(g)-L(f)L(g),
   \]
   which implies $L(fg)=L(f)L(g)$.

Letting $\bu=(L(x_1), \ldots, L(x_n))\in\RR^n$, as we have shown that 
$L$ is multiplicative on $\RR[\bx]$,
it holds $L(f)=f(\bu)$ for all $f\in\RR[\bx]$. It remains to show that $\bu\in\K$.
    Fix an arbitrary $\bv\in\RR^q$. As 
    $\bv^\intercal G(\bx)\bv=\langle \bv\bv^\intercal, G(\bx)\rangle\in\mH(G)$, 
    we have 
    \[
    \bv^{\intercal}G(\bu)\bv=\langle \bv\bv^\intercal, G(\bu)\rangle 
    =L(\langle \bv\bv^\intercal, G(\bx)\rangle)\ge 0.
    \]
    Thus, $G(\bu)\succeq 0$ and $\bu\in\K$.
\end{proof}

Now we are ready to prove the following result,
which extends the Handelman's Positivstellensatz \cite{Handelman1988} from the scalar inequality 
setting to the matrix context.
\begin{theorem}\label{th::main1}
    Suppose that Assumption \ref{assump::0} holds. If a polynomial 
    $f(\bx)\in\RR[\bx]$ is positive on $\K$, then $f\in\mH(G)$.
\end{theorem}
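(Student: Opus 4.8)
The plan is to run the classical separation-and-extreme-point argument, now carried out over the cone $\mH(G)$ whose construction from PSD-weighted Kronecker powers of $G$ is the novel ingredient; all the preparatory statements above have been arranged precisely so that this argument goes through as in the scalar Handelman setting. Suppose, for contradiction, that $f$ is positive on $\K$ but $f\notin\mH(G)$. Since $\mH(G)$ is convex with nonempty algebraic interior (Proposition \ref{prop::unperforated} shows $1$ is such a point) and is disjoint from the singleton $\{f\}$, Theorem \ref{th::sep} furnishes a nonzero linear functional $L:\RR[\bx]\to\RR$ and a constant $c$ with $L(h)\ge c$ for all $h\in\mH(G)$ and $L(f)\le c$.

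Next I would exploit that $\mH(G)$ is a convex cone: it contains $0$ and is closed under nonnegative scaling, since multiplying each $\Lambda_k$ by $t\ge 0$ keeps it PSD. Evaluating at $h=0$ gives $c\le 0$; and if $L(h_0)<0$ for some $h_0\in\mH(G)$, then $th_0\in\mH(G)$ for every $t\ge 0$ while $L(th_0)=tL(h_0)\to-\infty$ violates $L(th_0)\ge c$. Hence $L\ge 0$ on $\mH(G)$ and $L(f)\le c\le 0$. Because $1\in\mH(G)$ we have $L(1)\ge 0$; moreover $L(1)\ne 0$, for otherwise Lemma \ref{lem::L}—applicable since $\RR[\bx]=\mH(G)-\mH(G)$ by Proposition \ref{prop::pc} and $1$ is an algebraic interior point—would force $L=0$, contradicting that $L$ is a separating functional. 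After rescaling by $1/L(1)$ I may assume $L(1)=1$, so that $L\in\pi(\mH(G))$ while still $L(f)\le 0$.

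The contradiction then comes from showing $L(f)>0$ for \emph{every} $L\in\pi(\mH(G))$. By Proposition \ref{prop::extre}, each extreme point of $\pi(\mH(G))$ is a point evaluation $g\mapsto g(\bu)$ at some $\bu\in\K$. Since $f$ is positive on the compact set $\K$, it attains a strictly positive minimum $f^\ast:=\min_{\bu\in\K}f(\bu)>0$, so every extreme point $L'$ satisfies $L'(f)=f(\bu)\ge f^\ast>0$. The set $\pi(\mH(G))$ is compact (as established just above, using that $1$ is an algebraic interior point) and convex, and the map $L\mapsto L(f)$ is a continuous affine functional on it, since $f$ is a polynomial so $L(f)$ is a finite linear combination of the coordinates $L(\bx^{\ba})$ in the topology of pointwise convergence. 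By Bauer's minimum principle its minimum over the compact convex set $\pi(\mH(G))$ is attained at an extreme point, whence $L(f)\ge f^\ast>0$ for all $L\in\pi(\mH(G))$. This contradicts $L(f)\le 0$, and therefore $f\in\mH(G)$.

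I expect the crux to be the final paragraph: converting pointwise positivity of $f$ on $\K$ into a uniform strictly positive lower bound for $L(f)$ across all of $\pi(\mH(G))$. This step must simultaneously invoke compactness of $\K$ (to obtain $f^\ast>0$), Proposition \ref{prop::extre} (to identify extreme points with evaluations at points of $\K$), and compactness of $\pi(\mH(G))$ together with Bauer's principle (to reduce the minimum of the continuous functional to its value at extreme points). The earlier propositions on $\RR[\bx]=\mH(G)-\mH(G)$ and on $1$ lying in the algebraic interior are exactly the hypotheses needed to make the normalization and the compactness of $\pi(\mH(G))$ legitimate, so once they are in hand the remaining work is assembling these pieces rather than any new estimate.
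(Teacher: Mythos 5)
Your proof is correct and follows essentially the same architecture as the paper's: separation via Theorem~\ref{th::sep}, the cone argument forcing $L\ge 0$ on $\mH(G)$ and $L(f)\le 0$, Lemma~\ref{lem::L} (backed by Propositions~\ref{prop::pc} and~\ref{prop::unperforated}) to rule out $L(1)=0$ and normalize $L$ into $\pi(\mH(G))$, and then Proposition~\ref{prop::extre} together with compactness of $\K$ to reach the contradiction. The one genuine divergence is the concluding step: the paper applies the Krein--Milman theorem to write $L/L(1)$ as a pointwise limit of convex combinations of point evaluations on $\K$ and then passes to the limit in the uniform bound $L_k(f)\ge\varepsilon$, whereas you invoke Bauer's minimum principle for the map $L\mapsto L(f)$ on the compact convex set $\pi(\mH(G))$, which you correctly note is a continuous affine functional in the product topology since $f$ is a finite linear combination of monomials. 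The two routes are interchangeable here---Bauer's principle is itself a standard consequence of Krein--Milman in a locally convex Hausdorff space, and $\RR^{\N^n}$ with the product topology qualifies---but yours is slightly tidier in that it avoids the explicit limiting argument over convex combinations; its only cost is reliance on a named theorem the paper does not quote, whereas the paper's version is self-contained given its compactness lemma.
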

\begin{proof}
%    By Proposition \ref{prop::unperforated}, $\RR[\bx]$ with the partial 
%    order defined by $\mH(G)$ is unperforated and has $1$ as an order unit.
%    Fix an arbitrary extremal positive group homomorphisms 
%    $L: \RR[\bx] \to \RR$, that is $L(g)\ge 0$ for all $g\in\mH(G)$.
%    By \cite[Proposition I.2 (b)]{Handelman1985}, $L$ must be multiplicative
%    and hence corresponds to a point $\bu\in\RR^n$, given by the values 
%    $(L(x_1),\ldots,L(x_n))$. Now we prove that $\bu\in\K$. 
%    Fix an arbitrary $\bv\in\RR^q$. As 
%    $\bv^\intercal G(\bx)\bv=\langle \bv\bv^\intercal, G(\bx)\rangle\in\mH(G)$, 
%    we have 
%    \[
%    \bv^{\intercal}G(\bu)\bv=\langle \bv\bv^\intercal, G(\bu)\rangle =L(\bv^{\intercal}G(\bx)\bv)\ge 0.
%    \]
%    Thus, $G(\bu)\succeq 0$ and $\bu\in\K$. Therefore, as $f(\bx) > 0$ on $\K$, 
%    we have $L(f)\ge 0$. Since $L$ is arbitrary, we obtain $f\in\mH(G)$
%    by \cite[Theorem I.1]{Handelman1985}.
As Assumption \ref{assump::0} holds, by Propositions \ref{prop::pc} and \ref{prop::unperforated}, 
we have $\RR[\bx]=\mH(G)-\mH(G)$ and $1$ is an algebraic interior point of $\mH(G)$. 

Suppose to the contrary that $f\not\in\mH(G)$. Then, by Proposition \ref{prop::unperforated} and Theorem \ref{th::sep},
there exists a nonzero linear functional $L: \RR[\bx]\to \RR$ 
such that $L(f)\le \inf_{h\in\mH(G)}L(h)$. As $\mH(G)$ is a cone,
we must have $\inf_{h\in\mH(G)}L(h)=0$, which implies that 
$L(f)\le 0$ and $L(h)\ge 0$ for all $h\in\mH(G)$. By Lemma \ref{lem::L}, we have $L(1)>0$ and hence $L/L(1)\in\pi(\mH(G))$.
By applying the Krein-Milman theorem to the convex compact set $\pi(\mH(G))$,
we can conclude that $L/L(1)$ lies in the closed convex hull of
extreme points of $\pi(\mH(G))$. Hence, we have 
$L/L(1)=\lim_{k\to\infty} L_k$ (pointwise convergence), 
where each $L_k$ belongs to the convex hull
of extreme points of $\pi(\mH(G))$. By Proposition \ref{prop::extre}, for each 
$k\in\N$, there exist points $\bu_1,\ldots,\bu_{\ell_k}\in\K$ for some $\ell_k\in\N$ and positive real numbers $\lambda_1,\ldots,\lambda_{\ell_k}\in\RR$ with $\sum_{i=1}^{\ell_k}\lambda_i=1$ such 
that
\[
L_k(h)=\sum_{i=1}^{\ell_k}\lambda_i h(\bu_i),\ \ \forall h\in\RR[\bx].
\]
As $\K$ is compact, there is some $\varepsilon>0$ such that $f(\bx)\ge \varepsilon$ on $\K$. Therefore, 
$L_k(f)=\sum_{i=1}^{\ell_k}\lambda_i f(\bu_k)\ge \varepsilon$ for all
$k\in\N$. By the pointwise convergence, 
\[
L(f)=L(1)\lim_{k\to\infty}L_k(f)\ge L(1)\varepsilon>0,
\]
a contradiction. 
\end{proof}
\begin{remark}\label{rmk::core}
    From the proof, we observe that the key to establishing Theorem \ref{th::main1} 
    lies in the properties: (i) $\mH(G)$ is closed under addition and multiplication;
    (ii) $\RR[\bx]=\mH(G)-\mH(G)$; and (iii) $1$ is an algebraic interior point of $\mH(G)$.
\end{remark}

\subsection{The nonlinear case}

In this part, let $G(\bx)\in\bS[\bx]^q$ be a nonlinear polynomial matrix.
If the set $\K$ defined by the inequality $G(\bx)\succeq 0$ is compact, 
by rescaling if necessary, we may assume that $I_q-G(\bx)\succeq 0$ on $\K$. 
We now introduce the dilated form of $G(\bx)$
\[\wt{G}(\bx)=\diag(G(\bx), I_q-G(\bx))\in \bS[\bx]^{2q}.
\]
Clearly, the inequality $\wt{G}(\bx)\succeq 0$ defines the same set $\K$.

Let 
\[
\mH(G):=\left\{\lambda_0+\sum_{k=1}^m \left\langle\Lambda_k, \wt{G}(\bx)^{\otimes k}\right\rangle\ \Big|\ m\in\N,\ \lambda_0\in\RR_+,\ \Lambda_k\in\bS^{(2q)^k}_+,\ k\in[m]\right\}\subset\RR[\bx].
\]
As with Proposition \ref{prop::closed}, it is readily seen that the cone 
$\mH(G)$ is closed under both addition and multiplication.

\begin{assumption}\label{assump::1}
{\upshape (i)} $\K$ is compact and $I_q-G(\bx)\succeq 0$ on $\K$;
{\upshape (ii)} $\RR[\bx]=\mH(G)-\mH(G)$.
\end{assumption}

Under Assumption \ref{assump::1}, it is clear that $f(\bx)\ge 0$ on $\K$ if $f\in\mH(G)$.
\begin{remark}
Assumption \ref{assump::1} (ii) holds if and only if for any $f(\bx)\in\RR[\bx]$, 
there exist $m\in\N$, $\gamma\in\RR$, and symmetric matrices 
$\Gamma_k\in\bS^{(2q)^k}$, $k\in[m]$, such that 
\[
f(\bx)=\gamma+\sum_{k=1}^m\left\langle \Gamma_k, \wt{G}(\bx)^{\otimes k}\right\rangle.
\]
In this case, we say that $\RR[\bx]$ can be generated by $\{I_q, G(\bx)\}$. 
    If Assumption \ref{assump::1} (ii) does not hold, it can be restored by augmenting $G(\bx)\succeq 0$  
    by some redundant inequalities. In fact, 
    as $\K$ is compact, for any $i\in[n]$, there exists $b_i\in\RR$ such that $x_i-b_i\ge 0$
    on $\K$. Then, it follows directly that Assumption \ref{assump::1} is satisfied when $G(\bx)$ 
    is replaced by $\diag(G(\bx), x_1-b_1,\ldots, x_n-b_n)$. 
\end{remark}

%Assume that $\K$ has nonempty interior, by Proposition \ref{prop::pc}, consider the 
%structure of a partially ordered ring on $\RR[\bx]$ imposed by $\mH(\wt{G})$. 

\begin{proposition}\label{prop::unperforated2}
    Suppose that Assumption \ref{assump::1} holds.
    Then, $1$ lies in the algebraic interior of the convex set 
    $\mH(G)$.
\end{proposition}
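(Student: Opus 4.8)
The plan is to follow the template of the proof of Proposition~\ref{prop::unperforated}. By the definition of algebraic interior it suffices to show that for every $f\in\RR[\bx]$ there is a positive integer $a$ with $a\pm f\in\mH(G)$. Granting this, and using that $\mH(G)$ is a convex cone containing $1$, for $0\le\lambda<1/a$ one has $1+\lambda f=(1-\lambda a)\cdot 1+\lambda(a+f)\in\mH(G)$, and symmetrically $1-\lambda f\in\mH(G)$ via $a-f$; hence $1+\lambda f\in\mH(G)$ for all $|\lambda|<1/a$, and setting $\varepsilon_f=1/a$ completes the verification.

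The first ingredient I would establish is the analogue of Proposition~\ref{prop::main}~(iii) for the dilated matrix, which here comes for free from a trace identity. Indeed,
\[
\langle I_{2q}, \wt{G}(\bx)\rangle=\tr{\wt{G}(\bx)}=\tr{G(\bx)}+\tr{I_q-G(\bx)}=q,
\]
so $V:=I_{2q}/q\in\bS^{2q}_{++}$ satisfies $\langle V, \wt{G}(\bx)\rangle=1$. Setting $U_k:=V^{\otimes k}=I_{(2q)^k}/q^{k}\in\bS^{(2q)^k}_{++}$ and repeating the Kronecker-trace computation from the proof of Proposition~\ref{prop::main}~(iii) gives $\langle U_k, \wt{G}(\bx)^{\otimes k}\rangle=\langle V, \wt{G}(\bx)\rangle^{k}=1$ for every $k\in\N$. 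This positive-definite representation of the constant $1$ at each Kronecker power is exactly what the dilation buys us, replacing the compactness argument used for the constant in the linear case.

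Next I would invoke Assumption~\ref{assump::1}~(ii), in the equivalent form recorded in the remark following Assumption~\ref{assump::1}, to write any given $f$ as $f(\bx)=\gamma+\sum_{k=1}^m\langle \Gamma_k, \wt{G}(\bx)^{\otimes k}\rangle$ with $\gamma\in\RR$ and symmetric $\Gamma_k\in\bS^{(2q)^k}$. Since each $U_k$ is positive definite, for each $k$ I can pick $c_k>0$ large enough that $c_kU_k\pm\Gamma_k\succeq 0$, and then substitute
\[
\pm\langle \Gamma_k, \wt{G}(\bx)^{\otimes k}\rangle=\langle c_kU_k\pm\Gamma_k, \wt{G}(\bx)^{\otimes k}\rangle-c_k,
\]
using $\langle U_k, \wt{G}(\bx)^{\otimes k}\rangle=1$ from the previous step. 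Summing over $k$ yields
\[
a\pm f=\Bigl(a\pm\gamma-\sum_{k=1}^m c_k\Bigr)+\sum_{k=1}^m\langle c_kU_k\pm\Gamma_k, \wt{G}(\bx)^{\otimes k}\rangle,
\]
so choosing an integer $a\ge|\gamma|+\sum_{k=1}^m c_k$ makes the leading constant nonnegative and every coefficient matrix positive semidefinite, giving $a\pm f\in\mH(G)$ as required.

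I expect the only delicate point to be the bookkeeping of the constant terms across the different powers $k=1,\dots,m$. Unlike the linear setting of Proposition~\ref{prop::unperforated}, where the representation of $\bx^{\ba}$ lived at a single Kronecker power $|\ba|$, the decomposition supplied by Assumption~\ref{assump::1}~(ii) is spread over several powers, so each indefinite $\Gamma_k$ must be absorbed individually into $c_kU_k\pm\Gamma_k$ at the cost of a constant $-c_k$, and all of these constants must be dominated by the single integer $a$. Conceptually, however, the argument is driven entirely by the identity $\tr{\wt{G}(\bx)}=q$, and once that is in hand the remaining steps are routine.
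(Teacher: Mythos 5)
Your proposal is correct, and it takes a genuinely different---and considerably shorter---route than the paper's own proof. The paper reduces to monomials, splits $\gamma$ and each $\Gamma_k$ into positive and negative parts, and then faces the main difficulty of showing $\left\langle \Gamma_k^+, I_{(2q)^k}-\wt{G}(\bx)^{\otimes k}\right\rangle\in\mH(G)$; it resolves this by an intricate telescoping construction, building a chain of partial-trace matrices $\Lambda_k,\ldots,\Lambda_1$ so that the sum $\left\langle \Lambda_1, I_{2q}-\wt{G}(\bx)\right\rangle+\sum_{\ell=2}^{k}\left\langle \Lambda_{\ell}, \left(I_{2q}-\wt{G}(\bx)\right)\otimes \wt{G}(\bx)^{\otimes \ell-1}\right\rangle$ collapses to $\left\langle \Gamma_k^+, I_{(2q)^k}-\wt{G}(\bx)^{\otimes k}\right\rangle$, and then recognizing each summand as an element of $\mH(G)$ by swapping the two diagonal blocks of the dilation (since $I_{2q}-\wt{G}(\bx)$ is a block-permutation of $\wt{G}(\bx)$). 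Your observation that $\tr{\wt{G}(\bx)}\equiv q$, hence $\left\langle I_{(2q)^k}/q^k, \wt{G}(\bx)^{\otimes k}\right\rangle=1$ identically, short-circuits all of this: it supplies a positive definite representation of the constant $1$ at every Kronecker power---exactly the role that Proposition \ref{prop::main}(iii) (obtained there via compactness) plays in the linear case---after which the absorption step $c_kU_k\pm\Gamma_k\succeq 0$ is the same routine argument as in Proposition \ref{prop::unperforated}, carried out directly for a general $f$ without the reduction to monomials. In effect you transplant the linear-case proof to the dilated matrix, with the constant-trace identity of the dilation substituting for compactness; notably, like the paper's proof, your argument uses only Assumption \ref{assump::1}(ii). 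What the paper's longer route buys is the standalone structural fact that $\left\langle \Gamma^+, I_{(2q)^k}-\wt{G}(\bx)^{\otimes k}\right\rangle\in\mH(G)$ for every PSD matrix $\Gamma^+$, together with a radius expressed in terms of $\tr{\Gamma_k^{\pm}}$ rather than your spectral-norm-based constants $c_k$; neither is needed for the proposition itself, so your argument stands as a complete and simpler alternative.
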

\begin{proof}
%    It is clear that $\RR[\bx]$ is unperforated. 
    To prove that $1$ is an algebraic interior point of $\mH(G)$, 
    it suffices to prove that for any monomial $\bx^\ba=x_1^{\alpha_1}\cdots x_n^{\alpha_n}$, $\ba\in\N^n$,
   there exists positive $\lambda_{\ba}\in\RR$ such that  $1\pm t\bx^{\ba}\in\mH(G)$ for all
   $0\le t\le \lambda_{\ba}$. 

   As Assumption \ref{assump::1} (ii) holds, there exist $m\in\N$, $\gamma\in\RR$, and symmetric matrices 
   $\Gamma_k\in\bS^{(2q)^k}$, $k\in[m]$, such that 
\[
\bx^{\ba}=\gamma+\sum_{k=1}^m\left\langle \Gamma_k, \wt{G}(\bx)^{\otimes k}\right\rangle.
\]
Write $\gamma=\gamma^+-\gamma^-$ and $\Gamma_k=\Gamma_k^+-\Gamma_k^-$, $k\in[m]$, where 
$\gamma^{\pm}> 0$, $\Gamma_k^{\pm}\succeq 0$, $k\in[m]$.

Letting $t\in\RR$ be positive such that 
\begin{equation}\label{eq::t}
0<t\le \left(\gamma^++\sum_{k=1}^m\tr{\Gamma_k^+}\right)^{-1}.
\end{equation}
We first show that $1-t\bx^{\ba}\in\mH(G)$. 

We have 
\[
\begin{aligned}
1-t\bx^{\ba}&=1-\left(t\gamma^++\sum_{k=1}^m\left\langle t\Gamma_k^+, \wt{G}(\bx)^{\otimes k}\right\rangle\right)
+\left(t\gamma^-+\sum_{k=1}^m\left\langle t\Gamma_k^-, \wt{G}(\bx)^{\otimes k}\right\rangle\right)\\
&=\left(1-t\gamma^++\sum_{k=1}^m\left\langle -t\Gamma_k^+, \wt{G}(\bx)^{\otimes k}\right\rangle\right)
+\left(t\gamma^-+\sum_{k=1}^m\left\langle t\Gamma_k^-, \wt{G}(\bx)^{\otimes k}\right\rangle\right).\\
\end{aligned}
\]
Observe that 
\[
\begin{aligned}
1-t\gamma^++\sum_{k=1}^m\left\langle -t\Gamma_k^+, \wt{G}(\bx)^{\otimes k}\right\rangle
&=1-t\gamma^++\sum_{k=1}^m\left\langle -t\Gamma_k^+, \wt{G}(\bx)^{\otimes k}-I_{(2q)^k}+I_{(2q)^k}\right\rangle\\
&=1-t\left(\gamma^++\sum_{k=1}^m\tr{\Gamma_k^+}\right)+t\sum_{k=1}^m\left\langle \Gamma_k^+, I_{(2q)^k}-\wt{G}(\bx)^{\otimes k}\right\rangle.
\end{aligned}
\]
By \eqref{eq::t}, to show $1-t\bx^{\ba}\in\mH(G)$, it remains to prove that for each $k\in[m]$, 
\[
\left\langle \Gamma_k^+, I_{(2q)^k}-\wt{G}(\bx)^{\otimes k}\right\rangle\in\mH(G).
\]
%which indicates that $1-t\bx^{\ba}\in\mH(\wt{G})$.

Consider the case when $k=1$. Observe that $I_{2q}-\wt{G}(\bx)=\diag(I_{q}-G(\bx), G(\bx))$.
Equally partition $\Gamma_1^+$ as a $2\times 2$ block matrix $[(\Gamma_1^k)_{ij}]_{i,j\in[2]}$, where 
$(\Gamma_1^+)_{ij}$ denotes its $(i,j)$-th block. Let $\wt{\Gamma}_1^+=\diag((\Gamma_1^+)_{22}, (\Gamma_1^+)_{11})$. 
Then, we have $\wt{\Gamma}_1^+\succeq 0$ and 
\[
\left\langle \Gamma_1^+, I_{2q}-\wt{G}(\bx)\right\rangle=
\left\langle \wt{\Gamma}_1^+, \wt{G}(\bx)\right\rangle\in\mH(G).
\]

Now fix a $k\in[m]$ with $k\ge 2$. We define matrices $\Lambda_\ell\in\bS^{(2q)^\ell}$,
$\ell\in[k]$, in the following way: (i) Let $\Lambda_k=\Gamma_k^+$;
(ii) For $\ell=k,\ldots,2$, 
equally partition $\Lambda_\ell$ as a $2q\times 2q$ block matrix 
$[(\Lambda_\ell)_{ij}]_{i,j\in[2q]}$, where 
$(\Lambda_\ell)_{ij}$ denotes its $(i,j)$-th block; (iii) Let
$\Lambda_{\ell-1}=\sum_{i=1}^{2q}(\Lambda_\ell)_{ii}\in\bS^{(2q)^{\ell-1}}$. 
One can check that the following properties hold:
\begin{enumerate}
    \item[\upshape (i)] $\Lambda_{\ell}\succeq 0$ for all $\ell\in[k]$;
    \item[\upshape (ii)] $\left\langle \Lambda_1, I_{2q}\right\rangle=\left\langle \Lambda_k, I_{(2q)^{k}}\right\rangle=\left\langle \Gamma_k^+, I_{(2q)^k}\right\rangle$;
    \item[\upshape (iii)] For each $\ell=2,\ldots,k$, 
    \[
\left\langle \Lambda_{\ell}, I_{2q}\otimes \wt{G}(\bx)^{\otimes \ell-1}\right\rangle
=\left\langle \Lambda_{\ell-1}, \wt{G}(\bx)^{\otimes \ell-1}\right\rangle.
\]
\end{enumerate}
Consequently, 
\begin{equation}\label{eq::LG}
\begin{aligned}
&\left\langle \Lambda_1, I_{2q}-\wt{G}(\bx)\right\rangle+
\sum_{\ell=2}^{k}\left\langle \Lambda_{\ell}, \left(I_{2q}-\wt{G}(\bx)\right)\otimes \wt{G}(\bx)^{\otimes \ell-1}\right\rangle\\
=&\left\langle \Lambda_1, I_{2q}-\wt{G}(\bx)\right\rangle+
\sum_{\ell=2}^{k}\left(\left\langle \Lambda_{\ell}, I_{2q}\otimes \wt{G}(\bx)^{\otimes \ell-1}\right\rangle-\left\langle \Lambda_{\ell}, \wt{G}(\bx)^{\otimes \ell}\right\rangle\right)\\
=&\left\langle \Lambda_1, I_{2q}-\wt{G}(\bx)\right\rangle+
\sum_{\ell=2}^{k}\left(\left\langle \Lambda_{\ell-1}, \wt{G}(\bx)^{\otimes \ell-1}\right\rangle-\left\langle \Lambda_{\ell}, \wt{G}(\bx)^{\otimes \ell}\right\rangle\right)\\
=&\left\langle \Lambda_1, I_{2q}\right\rangle-\left\langle \Lambda_k, \wt{G}(\bx)^{\otimes k}\right\rangle
=\left\langle \Lambda_k, I_{2q}\right\rangle-\left\langle \Lambda_k, \wt{G}(\bx)^{\otimes k}\right\rangle
=\left\langle \Gamma_k^+, I_{2q}-\wt{G}(\bx)^{\otimes k}\right\rangle.
\end{aligned}
\end{equation}
For each $\ell\in[k]$, equally partition $\Lambda_\ell$ 
as a $2\times 2$ block matrix $[(\Lambda_\ell)_{ij}]_{i,j\in[2]}, $where 
$(\Lambda_\ell)_{ij}$ denotes its $(i,j)$-th block, and let
$\wt{\Lambda}_{\ell}=\diag((\Lambda_\ell)_{22}, (\Lambda_\ell)_{11})$. We have 
$\wt{\Lambda}_{\ell}\succeq 0$ for all $\ell\in[k]$, and 
\[
\left\langle \Lambda_1, I_{2q}-\wt{G}(\bx)\right\rangle=\left\langle \wt{\Lambda}_1, \wt{G}(\bx)\right\rangle,
\ \
\left\langle \Lambda_{\ell}, \left(I_{2q}-\wt{G}(\bx)\right)\otimes \wt{G}(\bx)^{\otimes \ell-1}\right\rangle
=\left\langle \wt{\Lambda}_{\ell}, \wt{G}(\bx)^{\otimes \ell}\right\rangle, 
\]
for all $\ell=2,\ldots, k.$
Therefore, by \eqref{eq::LG}, it holds that
\[
\left\langle \Gamma_k^+, I_{2q}-\wt{G}(\bx)^{\otimes k}\right\rangle=
\sum_{\ell=1}^k \left\langle \wt{\Lambda}_{\ell}, \wt{G}(\bx)^{\otimes \ell}\right\rangle\in\mH(G).
\]
Now we have shown that $1-t\bx^{\ba}\in\mH(G)$ for all $t\in\RR$ satisfying \eqref{eq::t}.

By similar arguments, we can prove that $1+t\bx^{\ba}\in\mH(G)$ for all $t\in\RR$ satisfying
\[
0<t\le \left(\gamma^-+\sum_{k=1}^m\tr{\Gamma_k^-}\right)^{-1}.
\]
Then, by letting 
\[
\lambda_{\ba}=\min\left\{\left(\gamma^++\sum_{k=1}^m\tr{\Gamma_k^+}\right)^{-1}, \left(\gamma^-+\sum_{k=1}^m\tr{\Gamma_k^-}\right)^{-1}\right\},
\]
the proof is complete.
\end{proof}

The following results extends Krivine-Stengle's Positivstellensatz 
\cite{Krivine1964, Stengle1974} (see also \cite{Vas2003}) 
to the PMI setting.

\begin{theorem}\label{th::main2}
    Suppose that Assumption \ref{assump::1} holds. If a polynomial 
    $f(\bx)\in\RR[\bx]$ is positive on $\K$, then $f\in\mH(G)$.
\end{theorem}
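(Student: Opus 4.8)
The plan is to mirror the argument used for Theorem \ref{th::main1}, exploiting the structural observation recorded in Remark \ref{rmk::core}: the entire proof of the linear case rests on three abstract properties of the cone $\mH(G)$, namely that (i) it is closed under addition and multiplication, (ii) $\RR[\bx]=\mH(G)-\mH(G)$, and (iii) $1$ is an algebraic interior point of $\mH(G)$. The redefinition of $\mH(G)$ in terms of the dilated matrix $\wt{G}(\bx)$ is engineered precisely so that these same three properties remain available in the nonlinear setting, after which the chain of intermediate results applies with $G$ replaced by $\wt{G}$. First I would verify the three properties: property (i) was already noted to hold for the redefined cone; property (ii) is exactly Assumption \ref{assump::1} (ii); and property (iii) is the content of Proposition \ref{prop::unperforated2}, proved under Assumption \ref{assump::1}. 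Thus the hypotheses of the compactness lemma for $\pi(\mH(G))$, of Lemma \ref{lem::L}, and of Proposition \ref{prop::extre} are all met.

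Next I would confirm that the intermediate results transfer. The compactness of $\pi(\mH(G))$ and Lemma \ref{lem::L} invoke only the abstract properties (ii) and (iii), so their proofs carry over verbatim. Proposition \ref{prop::extre} likewise derives multiplicativity of any extreme point $L$ using only (ii), (iii), and Lemma \ref{lem::L}; the sole step that touches the concrete matrix is the final verification that the associated point $\bu=(L(x_1),\ldots,L(x_n))$ lies in $\K$. Here I would redo that step with $\wt{G}$ in place of $G$: for any $\bv\in\RR^{2q}$ one has $\bv^\intercal\wt{G}(\bx)\bv=\langle \bv\bv^\intercal,\wt{G}(\bx)\rangle\in\mH(G)$, so applying $L$ and using multiplicativity gives $\bv^\intercal\wt{G}(\bu)\bv\ge 0$; hence $\wt{G}(\bu)\succeq 0$, and reading off the leading diagonal block yields $G(\bu)\succeq 0$, i.e. $\bu\in\K$.

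With these pieces assembled, the contradiction argument of Theorem \ref{th::main1} applies word for word. Assuming $f\notin\mH(G)$, the separation theorem (Theorem \ref{th::sep}) together with Proposition \ref{prop::unperforated2} produces a nonzero functional $L$ with $L(f)\le 0$ and $L\ge 0$ on $\mH(G)$; Lemma \ref{lem::L} forces $L(1)>0$; the Krein-Milman theorem applied to the compact convex set $\pi(\mH(G))$ writes $L/L(1)$ as a pointwise limit of convex combinations of extreme points, each represented by points of $\K$ via the transferred Proposition \ref{prop::extre}; and the compactness of $\K$ gives $f\ge\varepsilon>0$ on $\K$, which forces $L(f)>0$, a contradiction. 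The genuine analytic work of the nonlinear case is not in this theorem but in Proposition \ref{prop::unperforated2}, whose telescoping identity \eqref{eq::LG} converts each $\langle \Gamma_k^+, I_{(2q)^k}-\wt{G}(\bx)^{\otimes k}\rangle$ into an element of $\mH(G)$; once that proposition is in hand, the only remaining substantive point is the re-derivation of the $\bu\in\K$ step of Proposition \ref{prop::extre} described above, and the rest is purely formal.
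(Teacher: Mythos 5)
Your proposal is correct and follows essentially the same route as the paper: the paper's own proof simply invokes Remark \ref{rmk::core} together with Proposition \ref{prop::unperforated2} and Assumption \ref{assump::1}~(ii), and declares the rest analogous to the proof of Theorem \ref{th::main1}. Your write-up is in fact more careful than the paper's, since you explicitly re-verify the one step of Proposition \ref{prop::extre} that touches the concrete matrix (showing $\wt{G}(\bu)\succeq 0$ via $\langle \bv\bv^\intercal,\wt{G}(\bx)\rangle\in\mH(G)$ and reading off the leading block), which the paper leaves implicit.
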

\begin{proof}
    As Assumption \ref{assump::1} holds, we have $\RR[\bx]=\mH(G)-\mH(G)$
    and $1$ is an algebraic interior point of $\mH(G)$ by Proposition \ref{prop::unperforated2}.
    Then recall Remark \ref{rmk::core}, we can see that 
    the conclusion follows using arguments analogous to those in the proof of Theorem \ref{th::main1}.
\end{proof}

\subsection{Using sparse structures}
In this part, we assume that the matrix $G(\bx)$ presents
block-diagonal structure, i.e., $G(\bx)=\diag(G_1(\bx), \ldots, G_t(\bx))$ for some $G_i(\bx)\in\bS[\bx]^{q_i}$, 
$q_i\in\N$, $i\in[t]$.
We first give an efficient representation of the cone $\mH(G)$ 
by exploiting $G(\bx)$'s block-diagonal structure. 
Under the additional assumption that each block $G_i(\bx)$ depends on overlapping variable subsets, 
we then derive sparse versions of Positivstellens\"atze Theorems~\ref{th::main1} and \ref{th::main2}.

Denote by $\mathcal{W}(t)$ the set of words $\bw(\bX)$ generated by $t$
noncommuting letters $\bX:=(X_1, \ldots, X_t)$. 
For example, the set of words in 2 noncommuting letters of length 3 is 
\[
\{X_1^3,\ X_1^2X_2,\ X_1X_2X_1,\ X_1X_2^2,\ X_2X_1^2,\ X_2X_1X_2,\ X_2^2X_1,\ X_2^3\}.
\]
For a word $\bw(\bX)$, its evaluation $\bw(\bA)$ on the tuple of matrices 
$\bA:=(A_1,\ldots,A_t)$
is defined as the matrix obtained by substituting 
the letter $X_i$ by $A_i$, $i\in[t]$, 
and forming the corresponding Kronecker product. For example, the evaluations
of words in 2 noncommuting letters of length 3 on $(A_1, A_2)$ are the matrices 
\[
\left\{A_1^{\otimes 3},\ A_1^{\otimes 2}\otimes A_2,\ A_1\otimes A_2\otimes A_1,\ 
A_1\otimes A_2^{\otimes 2},\ A_2\otimes A_1^{\otimes 2},\ A_2\otimes A_1\otimes A_2,
\ A_2^{\otimes 2}\otimes A_1,\ A_2^{\otimes 3}\right\}.
\]
For a word $\bw\in\mathcal{W}(t)$, denote by $w_i$, $i\in[t]$, the frequency of 
the letter $X_i$ appearing in $\bw(\bX)$.
Denote the length of the word $\bw(\bX)$ by $|\bw|$. Thus, it holds $|\bw|=w_1+\cdots+w_t$.

In the following, with a slight abuse of notation, let $\bw(G(\bx))=\bw(G_1(\bx),\ldots,G_t(\bx))$
for $\bw\in\mathcal{W}(t)$ and 
$\bw(\wt{G}(\bx))=\bw(G_1(\bx), I_{q_1}-G_1(\bx),\ldots, I_{q_t}-G_t(\bx))$
for $\bw\in\mathcal{W}(2t)$. 

\begin{proposition}\label{prop::diag}
Suppose that $G(\bx)=\diag(G_1(\bx), \ldots, G_t(\bx))$ for some $G_i(\bx)\in\bS[\bx]^{q_i}$, 
$q_i\in\N$, $i\in[t]$.  If $G(\bx)$ is linear, it holds
\[
\mH(G)=\left\{\lambda_0+\sum_{\bw\in\mathcal{W}(t), |\bw|\in[m]}\left\langle\Gamma_{\bw}, {\bw}(G(\bx))\right\rangle\ \Bigg|\  
    \begin{aligned}
    &m\in\N,\ \lambda_0\in\RR_+,\ \Gamma_{\bw}\in\bS_+^{{d(\bw)}},\\
    & d(\bw)=\prod_{i=1}^t q_i^{w_i},\ |\bw|\in[m]
    \end{aligned}\right\}.
\]
If $G(\bx)$ is nonlinear, it holds
\[
\mH(G)=\left\{\lambda_0+\sum_{\bw\in\mathcal{W}(2t), |\bw|\in[m]}\left\langle\Gamma_{\bw}, {\bw}(\wt{G}(\bx))\right\rangle\ \Bigg|\
    \begin{aligned}
    &m\in\N,\ \lambda_0\in\RR_+,\ \Gamma_{\bw}\in\bS_+^{{e(\bw)}},\\
    & e(\bw)=\prod_{i=1}^t q_i^{w_{2i-1}+w_{2i}},\ |\bw|\in[m]
    \end{aligned}\right\}.
\]
\end{proposition}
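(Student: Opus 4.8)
The plan is to verify that the two stated descriptions of $\mH(G)$ coincide, the key being that a Kronecker power of a block-diagonal matrix is, after a fixed relabelling of coordinates, again block-diagonal, with diagonal blocks indexed by words. I carry out the linear case in detail; the nonlinear case is the same argument applied to the block-diagonal matrix $\wt{G}(\bx)=\diag(G_1(\bx),I_{q_1}-G_1(\bx),\ldots,G_t(\bx),I_{q_t}-G_t(\bx))$, whose $2t$ blocks are matched with the letters $X_1,\ldots,X_{2t}$, so that the size bookkeeping $e(\bw)=\prod_i q_i^{w_{2i-1}+w_{2i}}$ replaces $d(\bw)=\prod_i q_i^{w_i}$.

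First I would record the central algebraic identity. Decompose $\RR^q=\bigoplus_{i=1}^t V_i$ with $V_i=\RR^{q_i}$ according to the block structure of $G(\bx)$, so that $G(\bx)$ leaves each $V_i$ invariant and acts on it as $G_i(\bx)$. Taking the $k$-fold tensor power gives the orthogonal decomposition $(\RR^q)^{\otimes k}=\bigoplus_{(i_1,\ldots,i_k)\in[t]^k} V_{i_1}\otimes\cdots\otimes V_{i_k}$, and $G(\bx)^{\otimes k}$ preserves each summand, acting on $V_{i_1}\otimes\cdots\otimes V_{i_k}$ as $G_{i_1}(\bx)\otimes\cdots\otimes G_{i_k}(\bx)$. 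Identifying the sequence $(i_1,\ldots,i_k)$ with the word $\bw=X_{i_1}\cdots X_{i_k}\in\mathcal{W}(t)$, whose evaluation is exactly $\bw(G(\bx))=G_{i_1}(\bx)\otimes\cdots\otimes G_{i_k}(\bx)$ of size $d(\bw)=\prod_{i=1}^t q_i^{w_i}$, this says precisely that there is a permutation matrix $P_k$, independent of $\bx$, with
\[
P_k\, G(\bx)^{\otimes k}\, P_k^{\intercal}=\bigoplus_{\bw\in\mathcal{W}(t),\,|\bw|=k}\bw(G(\bx)).
\]
The displayed decomposition is just a regrouping of the standard tensor basis, which is what justifies that $P_k$ is a permutation.

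With this in hand the two inclusions are routine. For ``$\subseteq$'', take a generator $\langle\Lambda_k,G(\bx)^{\otimes k}\rangle$ with $\Lambda_k\in\bS^{q^k}_+$ and set $\Lambda_k'=P_k\Lambda_k P_k^{\intercal}\succeq 0$, partitioned conformally with the word-indexed block structure. Since the matrix on the right above is block-diagonal, only the diagonal blocks of $\Lambda_k'$ survive in the trace, giving $\langle\Lambda_k,G(\bx)^{\otimes k}\rangle=\sum_{|\bw|=k}\langle(\Lambda_k')_{\bw\bw},\bw(G(\bx))\rangle$, where each diagonal block $(\Lambda_k')_{\bw\bw}\in\bS_+^{d(\bw)}$ because $\Lambda_k'\succeq 0$. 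Setting $\Gamma_{\bw}=(\Lambda_k')_{\bw\bw}$ exhibits the generator in the sparse form. For ``$\supseteq$'', given positive semidefinite matrices $\{\Gamma_{\bw}\}_{|\bw|=k}$, I assemble $\Lambda_k=P_k^{\intercal}\big(\bigoplus_{|\bw|=k}\Gamma_{\bw}\big)P_k$, which is positive semidefinite of size $\sum_{|\bw|=k}d(\bw)=\sum_{(i_1,\ldots,i_k)}q_{i_1}\cdots q_{i_k}=(q_1+\cdots+q_t)^k=q^k$, and which reverses the computation above. Summing over $k\in[m]$ together with the common constant term $\lambda_0$ then yields the claimed equality of cones.

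I do not expect a genuinely hard step; the only points that need care are (a) checking that the tensor regrouping is literally a permutation of the standard basis, so that $P_k$ is orthogonal and hence $\Lambda_k\mapsto P_k\Lambda_k P_k^{\intercal}$ preserves both positive semidefiniteness and the inner product, and (b) the dimension count $\sum_{|\bw|=k}d(\bw)=q^k$ that guarantees the reassembled $\Lambda_k$ has the correct size $q^k\times q^k$. In the nonlinear case the identical two checks go through with $q$ replaced by $2q$ and $d(\bw)$ by $e(\bw)$, using $\sum_{|\bw|=k}e(\bw)=(2q_1+\cdots+2q_t)^k=(2q)^k$.
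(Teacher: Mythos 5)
Your proposal is correct and follows essentially the same route as the paper: the key fact in both is that $G(\bx)^{\otimes k}$ is permutation-similar (by an $\bx$-independent permutation) to the direct sum of the word evaluations $\bw(G(\bx))$ with $|\bw|=k$, after which one extracts the word-indexed diagonal blocks of the conjugated PSD matrix for one inclusion and reassembles them for the other. Your write-up is in fact slightly more complete than the paper's, which only spells out the inclusion $\mH(G)\subseteq$ (word-indexed cone) and leaves the reverse assembly implicit, and your dimension count $\sum_{|\bw|=k}d(\bw)=q^k$ makes the bookkeeping explicit.
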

%
%\begin{corollary}\label{cor::bd}
%    Suppose that Assumption \ref{assump::0} holds and $G(\bx)=\diag(G_1(\bx), \ldots, G_t(\bx))$
%    for some linear and symmetric $G_i(\bx)$ of size $q_i\times q_i$, $q_i\in\N$, $i\in[s]$.
%    If a polynomial 
%    $f(\bx)\in\RR[\bx]$ is positive on $\K$, then $f$ has be representation
%    \[
%    f(\bx)=\lambda_0+
%    \sum_{\bw\in\mathcal{W}, |\bw|\in[m]}\left\langle\Gamma_{\bw}, {\bw}(G_1(\bx), \ldots,
%    G_t(\bx))\right\rangle,
%    \]
%    for some $\lambda_0\ge 0$, $m\in\N$ and $\Gamma_{\bw}\in\bS_+^{q^{d(\bw)}}$ where
%    $d(\bw):=\prod_{i=1}^t q_i^{w_i}$,
%    $\bw\in\mathcal{W}:=\mathcal{W}(X_1,\ldots,X_t)$, $|\bw|\in[m]$. 
%\end{corollary}
\begin{proof}
    We only prove the linear case and the nonlinear case follows similarly.
    Let $h(\bx)\in\mH(G)$. By definition, there exist $\lambda_0\in\RR_+$,
    $m\in\N$, $\Lambda_k\in\bS^{q^k}_+$, $q=q_1+\cdots+q_t$, $k\in[m]$, such that 
    \[
    h(\bx)=\lambda_0+\sum_{k=1}^m \left\langle\Lambda_k, G(\bx)^{\otimes k}\right\rangle.
    \]
    Note that for each $k\in[m]$, $G(\bx)^{\otimes k}$ is permutation-similar to a block-diagonal 
    matrix whose blocks consist of 
    \[
    \left\{\bw(G_1(\bx),\ldots,G_t(\bx)),\ \bw\in\mathcal{W},\ |\bw|=k\right\}.
    \]
    For each $\bw\in\mathcal{W}$ with $|\bw|=k$, let $\Gamma_{\bw}$ be the submatrix 
    of $\Lambda_k$ which corresponds to the block  $\bw(G_1(\bx),\ldots,G_t(\bx))$ in $G(\bx)^{\otimes k}$.
    Then, $\Gamma_{\bw}\succeq 0$ and 
    \[
    \sum_{\bw\in\mathcal{W}, |\bw|=k}\left\langle\Gamma_{\bw}, \bw(G_1(\bx),\ldots,G_t(\bx))\right\rangle=
    \left\langle\Lambda_k, G(\bx)^{\otimes k}\right\rangle.
    \]
    Hence, the conclusion follows.
\end{proof}

\begin{remark}
   Proposition \ref{prop::diag} establishes that when $G(\bx)$ is block-diagonal, the PSD
   matrices representing polynomials in $\mH(G)$ can be decomposed into smaller matrices. 
   In Section~\ref{sec::apps}, when we apply Theorems \ref{th::main1} and 
   \ref{th::main2} to PMI-constrained 
   polynomial optimization, this decomposition accelerates SDP solvers for the resulting relaxations.
\qed
\end{remark}

When scalar polynomial blocks appear in $G(\bx)$'s block-diagonal structure, 
the commutativity of scalar multiplication enables reduction in the number 
of PSD matrices required to represent polynomials in $\mH(G)$. Take the linear case for example,

\begin{corollary}\label{cor::bd2}
Suppose that $G(\bx)=\diag(g_1(\bx), \ldots, g_{t-1}(\bx), G_t(\bx))$ for some linear
$g_1(\bx)$, $\ldots$, $g_{t-1}(\bx)\in\RR[\bx]$, $G_t(\bx)\in\bS[\bx]^q$. Then, it holds
\[
\mH(G)=\left\{\lambda_0+
    \sum_{\bg\in\N^t_m}\left\langle\Gamma_{\bg}, g_1(\bx)^{\gamma_1}\cdots g_{t-1}(\bx)^{\gamma_{t-1}}
    G(\bx)^{\otimes \gamma_{t}}\right\rangle\ \Bigg|\  
    \begin{aligned}
    &m\in\N,\ \lambda_0\in\RR_+,\\
    & \Gamma_{\bg}\in\bS_+^{q^{\gamma_t}}, \bg\in\N_m^t
    \end{aligned}\right\}.
\]
\end{corollary}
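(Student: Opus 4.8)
The plan is to read off the asserted identity directly from the word-indexed description of $\mH(G)$ furnished by the linear case of Proposition \ref{prop::diag}, exploiting that $t-1$ of the diagonal blocks are scalars. In the notation of that proposition we have $q_1=\cdots=q_{t-1}=1$ and $q_t=q$, so a word $\bw\in\mathcal{W}(t)$ with letter frequencies $(w_1,\ldots,w_t)$ carries a coefficient matrix $\Gamma_{\bw}\in\bS_+^{d(\bw)}$ with $d(\bw)=q^{w_t}$; in particular the size of $\Gamma_{\bw}$ depends on $\bw$ only through $w_t$.

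The crux is the observation that, because $g_1(\bx),\ldots,g_{t-1}(\bx)$ are scalars, the evaluation $\bw(G(\bx))$ depends on $\bw$ only through its frequency vector. A $1\times 1$ Kronecker factor acts by ordinary scalar multiplication, so for any scalar $g$ and matrix $M$ one has $g\otimes M=gM=M\otimes g$; applying this repeatedly along the word lets every scalar letter be commuted past the matrix letters and pulled to the front. After collecting the scalars, the surviving matrix letters are all copies of $G_t(\bx)$ and assemble into $G_t(\bx)^{\otimes w_t}$. Hence, for every word $\bw$ with frequency $(w_1,\ldots,w_t)$,
\[
\bw(G(\bx))=g_1(\bx)^{w_1}\cdots g_{t-1}(\bx)^{w_{t-1}}\,G_t(\bx)^{\otimes w_t},
\]
regardless of the order in which the letters occur.

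With this identity I would group the words appearing in Proposition \ref{prop::diag} according to their common frequency vector $\bg=(\gamma_1,\ldots,\gamma_t)$. All words $\bw$ with frequency $\bg$ then share both the same evaluation and the same coefficient space $\bS_+^{q^{\gamma_t}}$, so their coefficients may be merged into a single matrix $\Gamma_{\bg}:=\sum_{\bw:\,(w_1,\ldots,w_t)=\bg}\Gamma_{\bw}$, which stays positive semidefinite as a sum of PSD matrices. This gives
\[
\sum_{\bw:\,(w_1,\ldots,w_t)=\bg}\left\langle\Gamma_{\bw},\bw(G(\bx))\right\rangle=\left\langle\Gamma_{\bg},\,g_1(\bx)^{\gamma_1}\cdots g_{t-1}(\bx)^{\gamma_{t-1}}G_t(\bx)^{\otimes\gamma_t}\right\rangle,
\]
and summing over all $\bg\in\N^t_m$ exhibits every element of $\mH(G)$ in the claimed form, proving one inclusion. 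The reverse inclusion is immediate, since any single term $\langle\Gamma_{\bg},g_1(\bx)^{\gamma_1}\cdots g_{t-1}(\bx)^{\gamma_{t-1}}G_t(\bx)^{\otimes\gamma_t}\rangle$ is realized by the single word $X_1^{\gamma_1}\cdots X_{t-1}^{\gamma_{t-1}}X_t^{\gamma_t}$ with coefficient $\Gamma_{\bg}$, and so already lies in $\mH(G)$ by Proposition \ref{prop::diag}.

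The main point requiring care is the Kronecker identity of the second paragraph: one must check rigorously that scalar factors commute past matrix factors and detach as ordinary scalar multiples, which reduces to the elementary equalities $g\otimes M=gM=M\otimes g$ for $1\times 1$ factors, applied along the word. The remainder is routine bookkeeping of frequencies and block sizes; the degenerate frequency $\bg=\mathbf{0}$ merely contributes a nonnegative scalar that is absorbed into $\lambda_0$. The nonlinear analogue is identical after replacing $\mathcal{W}(t)$ by $\mathcal{W}(2t)$ and adjoining the complementary blocks $I_{q}-G_t(\bx)$.
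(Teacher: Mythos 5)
Your proof is correct and takes essentially the same route as the paper: the paper's proof is a one-line appeal to Proposition~\ref{prop::diag}, and your argument---collapsing each word to its frequency vector via the scalar Kronecker identity $g\otimes M=gM=M\otimes g$, merging the PSD coefficients of words sharing a frequency vector, and realizing the reverse inclusion by the single word $X_1^{\gamma_1}\cdots X_t^{\gamma_t}$---is exactly the detail that appeal suppresses. You also correctly read the $G(\bx)^{\otimes\gamma_t}$ in the statement as $G_t(\bx)^{\otimes\gamma_t}$ (a typo in the paper, as the coefficient sizes $q^{\gamma_t}$ confirm) and correctly absorb the $\bg=\mathbf{0}$ term into $\lambda_0$.
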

\begin{proof} The conclusion follows from Proposition \ref{prop::diag}.
\end{proof}

Next, we further assume that the block-diagonal matrix $G(\bx)$ exhibits the correlative sparsity. In other words, 
the variables $\bx$ can be grouped into overlapping subsets 
such that each block $G_i(\bx)$ in $G(\bx)$ depending only on variables from a single subset. Precisely, we consider the following assumption.

\begin{assumption}\label{assump::vs}
The index set $[n]$ can be decomposed as $[n]=\cup_{i=1}^t I_i$ such that
\begin{enumerate}
       \item[(i)] For every $i\in[t]$, $G_i\in\bS[\bx(I_i)]^{q_i}$;
       \item[(ii)] The subsets $\{I_i\}_{i=1}^t$ satisfy the running intersection property (RIP), that is, 
       for every $i\in[t]\setminus\{1\}$, $I_i\cap (\cup_{j=1}^{i-1}I_j)\subseteq I_k$ for some $k\in[i-1]$. 
   \end{enumerate}
\end{assumption}

For each $i\in[t]$, let 
\begin{equation}\label{eq::Ki}
    \K_i:=\{\bx\in\RR^{|I_i|} \mid G_i(\bx)\succeq 0\}.
\end{equation}

\begin{assumption}\label{assump::vs2}
If $G(\bx)$ is linear, each $\K_i$ is compact with nonempty interior. Otherwise, for each $i\in[t]$,
$\K_i$ is compact, $I_{q_i} - G_i(\bx) \succeq 0$ on $\K_i$ and $\RR[\bx(I_i)] = \mH(G_i) - \mH(G_i)$.
\end{assumption}

Now we borrow the idea from \cite{WLT2018} and give the following sparse version of 
the Positivstellens\"atze in Theorems \ref{th::main1} and \ref{th::main2}.

\begin{theorem}\label{th::main3}
    Suppose that Assumptions \ref{assump::vs} and \ref{assump::vs2} hold. Let
    $f=\sum_{i=1}^t f_i\in\RR[\bx]$ with some $f_i\in\RR[\bx(I_i)]$, $i\in[t]$. If $f(\bx)$ is positive on $\K$, then 
    $f\in\sum_{i=1}^t\mH(G_i)$.
\end{theorem}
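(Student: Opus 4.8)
The plan is to prove a sparse (correlatively sparse) version of the Handelman/Krivine-Stengle type Positivstellensatz. The strategy, borrowed from the sparse Putinar-type results (e.g. the Lasserre sparse hierarchy and the approach in \cite{WLT2018}), is induction on the number $t$ of blocks, using the running intersection property (RIP) to peel off one block at a time. First I would reduce to the key analytic ingredient: since $f$ is \emph{positive} (not merely nonnegative) on the compact set $\K$, I can separate the positivity across the overlapping variable groups. The essential obstacle is that $f=\sum_{i=1}^t f_i$ with $f_i\in\RR[\bx(I_i)]$ is only assumed positive \emph{jointly} on $\K$; to apply the dense results (Theorems \ref{th::main1} or \ref{th::main2}) block-by-block I must redistribute $f$ into pieces $\hat f_i\in\RR[\bx(I_i)]$, each of which is positive on the corresponding $\K_i$, so that each $\hat f_i\in\mH(G_i)$ and $\sum_i\hat f_i=f$.

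The main technical lemma I would invoke (and this is where the RIP does its work) is a decomposition result for polynomials that are positive on a product-type compact set under RIP: if $f=\sum_i f_i$ with $f_i\in\RR[\bx(I_i)]$ and $f>0$ on $\K=\cap_i\{G_i\succeq 0\}$, then one can write $f=\sum_i \hat f_i$ with each $\hat f_i\in\RR[\bx(I_i)]$ strictly positive on $\K_i$. The standard way to produce this is induction on $t$: isolate the ``last'' group $I_t$, use RIP to guarantee that $I_t$ overlaps the union of the earlier groups only inside a single earlier $I_k$, introduce a slack/constant adjustment supported on the overlap variables, and push the positivity bound through. Concretely, on the compact set one uses a min-over-fibers argument: fixing the variables not in $I_t$, the restriction is positive on the $\K_t$-fiber, and compactness lets one choose a polynomial correction (in the overlap variables, which by RIP live in $\bx(I_k)$) that makes the $I_t$-piece positive on $\K_t$ while only modifying $f_k$ additively. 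This is precisely the mechanism that keeps each corrected summand dependent on a single variable group.

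Once the decomposition $f=\sum_{i=1}^t\hat f_i$ with $\hat f_i>0$ on $\K_i$ is in hand, the rest is routine: Assumption \ref{assump::vs2} says each $\K_i$ satisfies exactly the hypotheses of the dense theorems — compact with nonempty interior in the linear case (Theorem \ref{th::main1}), or compact with $I_{q_i}-G_i\succeq 0$ and $\RR[\bx(I_i)]=\mH(G_i)-\mH(G_i)$ in the nonlinear case (Theorem \ref{th::main2}). Applying the appropriate theorem to each $\hat f_i$ on $\K_i$ yields $\hat f_i\in\mH(G_i)$, and summing gives $f=\sum_{i=1}^t\hat f_i\in\sum_{i=1}^t\mH(G_i)$, which is the claim. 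I would also note that $\mH(G_i)$ is built from Kronecker powers of $G_i$ only, so the summand structure is preserved and no cross-block PSD matrices appear — this is the whole point of the sparse representation.

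The step I expect to be the genuine obstacle is the positivity-preserving decomposition under RIP, i.e. showing that joint positivity on $\K$ can be split into separate strict positivity on each $\K_i$ with single-group summands. The subtlety is twofold: one must control the ``sign'' of the correction terms on the overlaps so that adding a correction to $\hat f_k$ (to fix $\hat f_t$) does not destroy the positivity already arranged for $\hat f_k$, and one must ensure the corrections remain polynomials in the correct variable subsets. The RIP is exactly what makes a clean induction possible — it guarantees each new overlap is absorbed into a single previously-processed group — but verifying the strict inequalities survive each inductive step, uniformly over the compact fibers, is the part that requires care rather than bookkeeping. I would lean on the compactness of each $\K_i$ throughout to extract the uniform positive gaps $\varepsilon_i>0$ needed at each stage.
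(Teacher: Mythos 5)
Your overall architecture coincides with the paper's: reduce to single-group pieces $\hat f_i\in\RR[\bx(I_i)]$, each \emph{strictly positive on its own} $\K_i$, then apply Theorem~\ref{th::main1} (resp.\ Theorem~\ref{th::main2}) block-by-block via Assumption~\ref{assump::vs2} and sum. The difference is how that decomposition is produced. You propose to prove the positivity-splitting lemma from scratch by induction on $t$ using the RIP, correcting across overlaps with polynomials in the overlap variables and controlling fiber minima by compactness; this is essentially the lemma of Grimm--Netzer--Schweighofer on representations with structured sparsity, and your sketch is the standard (correct) way to prove it, though the lower-semicontinuity of the fiber minima, the empty-intersection case, and the polynomial approximation step are real work that you flag but do not carry out. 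The paper instead obtains the decomposition in one stroke from an off-the-shelf sparse \emph{SOS-based} matrix Positivstellensatz: it fixes $\varepsilon>0$ with $f-\varepsilon>0$ on $\K$, augments each $G_i$ with the redundant ball constraint $g_i=M_i-\sum_{j\in I_i}x_j^2$ (so the Archimedean-type hypothesis of the cited result holds), and invokes \cite[Theorem 1]{KM2009} to write $f-\varepsilon=\sum_{i=1}^t\left(\sigma_i+\left\langle \Sigma_i,\diag(G_i,g_i)\right\rangle\right)$ with $\sigma_i$ an SOS polynomial and $\Sigma_i$ an SOS polynomial matrix in $\RR[\bx(I_i)]$; each summand is then manifestly nonnegative on $\K_i$, so adding $\varepsilon/t$ yields the strictly positive single-group pieces, exactly the objects your lemma is designed to produce. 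What each buys: the paper's route is short and outsources precisely the delicate RIP/compactness work to the cited theorem (at the mild aesthetic cost of using SOS certificates inside a non-SOS paper); your route is self-contained and SOS-free, and correctly demands positivity on the full sets $\K_i$ rather than merely on projections of $\K$, but to be complete it must either include a full proof of the splitting lemma or cite it explicitly, since that lemma --- not the final assembly --- is the entire content of the theorem.
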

\begin{proof}
As $f(\bx)>0$ on $\K$, there exists $\varepsilon>0$ such that 
$f(\bx)-\varepsilon>0$ on $\K$.
   By Assumption~\ref{assump::vs2}, for each $i\in[t]$,
   there exists $M_i>0$ such that 
   $g_i(\bx(I_i)):=M_i-\sum_{j\in I_i}x_j^2\ge 0$ on $\K_i$. 
   Let $G_i'=\diag(G_i, g_i)$, then $G'_i(\bx(I_i))\succeq 0$ defines the same set $\K_i$. As $f(\bx)-\varepsilon>0$ on $\K$, by \cite[Theorem 1]{KM2009}, for each $i\in[t]$, there exist 
   SOS polynomial $\sigma_i\in\RR[\bx(I_i)]$ and SOS polynomial matrix $\Sigma_i\in\bS[\bx(I_i)]^{q_i+1}$ such that 
   \[
   f(\bx)-\varepsilon=\sum_{i=1}^t \left(\sigma(\bx(I_i))
   +\left\langle\Sigma_i(\bx(I_i)), G'_i(\bx(I_i))\right\rangle\right).
   \]
   Let \[
   f_i(\bx(I_i)):=\frac{\varepsilon}{t}+\sigma_i(\bx(I_i))
   +\left\langle\Sigma_i(\bx(I_i)), G'_i(\bx(I_i))\right\rangle,\quad
   i=1,\ldots,t.
   \]
   It is clear that each $f_i(\bx(I_i))$ is positive on $\K_i$.
   Then, by Assumption \ref{assump::vs2} and Theorems \ref{th::main1} and \ref{th::main2}, we have $f_i\in\mH(G_i)$ 
   and the conclusion follows.
\end{proof}

\begin{remark}\label{rmk::sparse}
Compared with Theorems \ref{th::main1} and \ref{th::main2}, 
if the correlative sparsity condition in Theorem \ref{th::main3} is satisfied 
by $f(\bx)$ and $G(\bx)$, the number of PSD matrices required 
to represent $f(\bx)$ can be greatly reduced. 
For instance, in the linear case and under the simplification $q_1=\cdots=q_t=q$, 
representing $f(\bx)$ using Theorem \ref{th::main1} and Proposition \ref{prop::diag} 
requires $t^k$ PSD matrices of size $q^k$ for each $k\in[m]$. 
In contrast, Theorem~\ref{th::main3} requires only $t$ such matrices for each $k\in[m]$.   
This improvement can significantly enhance the scalability and computational tractability 
of the SDP relaxations developed in Section \ref{sec::apps}
for PMI-constrained polynomial optimization that exhibit correlative 
sparsity (see Remark~\ref{rmk::sparse}).
\end{remark}

%Recall that $\mathcal{W}(X)$ denotes the set of words $\bw(X)$ generated by 
%noncommuting letters $X:=\{X_1, \ldots, X_t\}$.
%\begin{corollary}\label{cor::main2}
%    Suppose that Assumption \ref{assump::1} holds. If a polynomial 
%    $f(\bx)\in\RR[\bx]$ is positive on $\K$, 
%    then $f$ has be representation
%    \[
%    f(\bx)=\lambda_0+
%    \sum_{\bw\in\mathcal{W}, |\bw|\in[m]}\left\langle\Gamma_{\bw}, {\bw}(G(\bx), I_q-G(\bx))\right\rangle,
%    \]
%    for some $\lambda_0\ge 0$, $m\in\N$ and $\Gamma_{\bw}\in\bS_+^{q^{|\bw|}}$,
%    $\bw\in\mathcal{W}:=\mathcal{W}(X_1,X_2)$, $|\bw|\in[m]$. 
%\end{corollary}
%\begin{proof}
%    
%\end{proof}

\section{Applications to polynomial optimization with matrix inequality constraints}\label{sec::apps}

Consider the problem of minimizing a polynomial function over a semialgebraic set 
defined by a polynomial matrix inequality:
%\begin{equation}\label{eq::P}
%   f^{\star}:=\min_{\bx\in\K} f(\bx)\ \ \text{s.t. } \ \K:=\{\bx\in\RR^n \mid G(\bx)\succeq 0\}.
%   \tag{P}
%\end{equation}
\begin{flalign*}
\text{($\mathbb{P}$)} &&& f^{\star}:=\min_{\bx\in\K} f(\bx)\ \ \text{s.t. } \ \K=\{\bx\in\RR^n \mid G(\bx)\succeq 0\}, &&
\end{flalign*}
where $f(\bx)\in\RR[\bx]$ and $G(\bx)\in\bS[\bx]^q$.

In the definition of $\mH(G)$, by fixing the number $m\in\N$,  
we obtain the $m$-th truncation of $\mH(G)$ and denote it by $\mH_m(G)$.
Then, we have $\mH(G)=\cup_{m=0}^{\infty}\mH_m(G)$. 
For any $m\in\N$, consider the problem
%\begin{equation}\label{eq::Pm}
%    f^{\star}_m:=\sup_{r\in\RR}\ r \ \ \text{s.t. }\  f(\bx)-r\in\mH_m(G).
%    \tag{$\mathbb{H}_m$}
%\end{equation}
\begin{flalign*}
\text{($\mathbb{H}_m$)} &&& f^{\star}_m:=\sup_{r\in\RR}\ r \ \ \text{s.t. }\  f(\bx)-r\in\mH_m(G). &&
\end{flalign*}

By the definition of $\mH_m(G)$, the problem ($\mathbb{H}_m$) is a semidefinite program.
\begin{remark}{\rm
    When $G(\bx)$ is diagonal, the SDP relaxation ($\mathbb{H}_m$) retrieves the LP-relaxation 
    for scalar polynomial optimization problems obtained by Lasserre in \cite{LasserreLP2002,LasserreLP2005}.}
\end{remark}

\subsection{Asymptotic behavior of the SDP relaxations $\{(\mathbb{H}_m)\}$}
We establish the asymptotic convergence of the optimal values of the SDP relaxations $\{(\mathbb{H}_m)\}$
as $m\to\infty$.
\begin{theorem}
    Suppose that $G(\bx)$ is linear (resp., nonlinear) and Assumption \ref{assump::0}
    (resp., \ref{assump::1}) holds. Then, $f_m^{\star} \uparrow f^{\star}$ as $m\to\infty$. 
\end{theorem}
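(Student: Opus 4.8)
The plan is to establish the three standard ingredients of an asymptotically convergent relaxation hierarchy: the sequence $\{f_m^\star\}$ is nondecreasing, it is bounded above by $f^\star$, and its limit equals $f^\star$. I would first note that since $\K$ is compact (by Assumption~\ref{assump::0} in the linear case, or Assumption~\ref{assump::1} in the nonlinear case) and $f$ is continuous, the infimum $f^\star$ is attained and finite, so the statement is well-posed.

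For monotonicity, I would observe that enlarging $m$ only enlarges the admissible family of certificates: from the definition of $\mH_m(G)$ as the $m$-th truncation we have $\mH_m(G)\subseteq\mH_{m+1}(G)$, so every $r$ feasible for $(\mathbb{H}_m)$ is feasible for $(\mathbb{H}_{m+1})$, whence $f_m^\star\le f_{m+1}^\star$. For the upper bound I would use that every element of $\mH(G)$ is nonnegative on $\K$: if $r$ is feasible for $(\mathbb{H}_m)$, then $f-r\in\mH_m(G)\subseteq\mH(G)$, so $f(\bx)\ge r$ for all $\bx\in\K$ and thus $f^\star\ge r$; taking the supremum over feasible $r$ gives $f_m^\star\le f^\star$ for every $m$. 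Consequently the limit $\lim_{m\to\infty}f_m^\star$ exists and does not exceed $f^\star$.

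The substantive step is the reverse inequality, and here the work has already been done by the Positivstellens\"atze. Fix $\varepsilon>0$. Then $f(\bx)-(f^\star-\varepsilon)\ge\varepsilon>0$ on $\K$, so this polynomial is strictly positive on $\K$. Invoking Theorem~\ref{th::main1} in the linear case (resp.\ Theorem~\ref{th::main2} in the nonlinear case, whose hypotheses are precisely Assumption~\ref{assump::0}, resp.\ Assumption~\ref{assump::1}), I conclude $f-(f^\star-\varepsilon)\in\mH(G)$. Since $\mH(G)=\bigcup_{m=0}^\infty\mH_m(G)$, there is some $m_0$ with $f-(f^\star-\varepsilon)\in\mH_{m_0}(G)$, so $r=f^\star-\varepsilon$ is feasible for $(\mathbb{H}_{m_0})$ and hence $f_{m_0}^\star\ge f^\star-\varepsilon$. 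Therefore $\lim_{m\to\infty}f_m^\star\ge f^\star-\varepsilon$, and letting $\varepsilon\downarrow 0$ yields $\lim_{m\to\infty}f_m^\star\ge f^\star$. Combined with the upper bound, this proves $f_m^\star\uparrow f^\star$.

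There is no genuine obstacle remaining at this stage: the entire difficulty has been absorbed into Theorems~\ref{th::main1} and \ref{th::main2}, which guarantee that strict positivity on $\K$ forces membership in $\mH(G)$. The only points requiring care are bookkeeping ones, namely verifying that the truncations are nested so that monotonicity holds, and that $\mH(G)$ is genuinely the increasing union of the $\mH_m(G)$ so that the membership supplied by the Positivstellensatz indeed lands in a finite truncation.
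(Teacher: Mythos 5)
Your proof is correct and follows essentially the same route as the paper: monotonicity from the nesting $\mH_m(G)\subseteq\mH_{m+1}(G)$, the bound $f_m^\star\le f^\star$ from nonnegativity of elements of $\mH_m(G)$ on $\K$, and convergence by applying Theorem~\ref{th::main1} (resp.\ Theorem~\ref{th::main2}) to $f-f^\star+\varepsilon$ and noting that membership in $\mH(G)$ lands in some finite truncation. Your only additions are bookkeeping remarks (attainment of $f^\star$ on the compact set $\K$, explicitly writing $\mH(G)=\bigcup_m\mH_m(G)$) that the paper leaves implicit.
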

\begin{proof}
    As $\mH_m(G)\subseteq\mH_{m+1}(G)$, the sequence $\{f_m^{\star}\}_{m\in\N}$ is nondecreasing. 
    By definition, all elements in $\mH_m(G)$ are nonnegative on $\K$. Then, 
    $f(\bx)-f^{\star}_m\ge 0$ on $\K$ and hence $f^{\star}\ge f^{\star}_m$ for all $m\in\N$.
    Note that for any $\varepsilon>0$, $f(\bx)-f^{\star}+\varepsilon>0$ on $\K$. 
    Since $G(\bx)$ is linear (resp., nonlinear) and Assumption \ref{assump::0} 
    (resp., Assumption \ref{assump::1}) 
    holds, Theorem \ref{th::main1} (resp., Theorem \ref{th::main2}) implies that 
    $f^{\star}_m\ge f^{\star}-\varepsilon$ for some $m\in\N$. Thus, it holds 
    $f_m^{\star} \uparrow f^{\star}$ as $m\to\infty$. 
\end{proof}

Let $m_0\in\N$ be a number such that the SDP relaxation ($\mathbb{H}_m$) is feasible for all $m\ge m_0$.
As with the LP-relaxation for scalar polynomial optimization \cite[Theorem 3.1]{LasserreLP2005},
we analyze the asymptotic behavior of the optimal solutions of $\{(\mathbb{H}_m)\}$ as $m\to\infty$ 
in the linear case. The result extends to the nonlinear case via analogous arguments 
and we omit the details for simplicity. 
\begin{proposition}
    Suppose that $G(\bx)$ is linear and $G(\bu)\succ 0$ for some $\bu\in\K$.
    For any $m\ge m_0$, let 
    \[
    \left(r_m, \lambda_{m,0}\in\RR_+, \  \Lambda_{m,k}\in\bS^{q^k}_+,\ k\in[m]\right)
    \]
    be an  optimal solution of $(\mathbb{H}_m)$. Then the follow statements hold true.
    \begin{enumerate}
        \item[\upshape (i)] There exist a subsequence $\{m_j\}\subset\N$ and an infinite sequence of 
        PSD matrices 
        $\Lambda_{\star,k}\subset\bS^{q^k}_+,\ k\in\N$, such 
        that as $j\to\infty$,
        \[
        \lambda_{m_j,0}\to 0, \ \Lambda_{m_j,k}\to \Lambda_{\star,k}, \ \forall k\in\N,
        \]
        and for any $\bx\in\K$, 
        \[
        f(\bx)-f^{\star}\ge 
        \sum_{k\in\N} \left\langle\Lambda_{\star, k}, G(\bx)^{\otimes k}\right\rangle.
        \]
        \item[\upshape (ii)] Let $\bu^{\star}$ be a minimizer of $(\mathbb{P})$. For any decomposition  
        $\Lambda_{\star, k}=\sum_{\ell=1}^{s_k} \bv^{(k,\ell)}(\bv^{(k,\ell)})^{\intercal}$
        with $\bv^{(k,\ell)}\in\RR^{q^k}$, $k\in\N$, 
        it holds that $G(\bu^{\star})^{\otimes k} \bv^{(k,\ell)}=0$ for all $\ell\in[s_k]$, $k\in\N$. 
    \end{enumerate}
\end{proposition}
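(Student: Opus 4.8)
The plan is to anchor everything on the identity that an optimal point of $(\mathbb{H}_m)$ must satisfy, namely
\[
f(\bx)-r_m=\lambda_{m,0}+\sum_{k=1}^m\left\langle\Lambda_{m,k}, G(\bx)^{\otimes k}\right\rangle,\qquad \forall\,\bx\in\RR^n,
\]
which holds because $(r_m,\lambda_{m,0},\Lambda_{m,k})$ is feasible. First I would establish uniform boundedness of the matrices $\Lambda_{m,k}$ by evaluating this identity at the Slater point $\bu$. Since $G(\bu)\succ 0$, its Kronecker power obeys $G(\bu)^{\otimes k}\succeq\delta^k I_{q^k}$ with $\delta:=\lambda_{\min}(G(\bu))>0$, whence $\langle\Lambda_{m,k},G(\bu)^{\otimes k}\rangle\ge\delta^k\tr{\Lambda_{m,k}}$. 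As $\{r_m\}$ is nondecreasing with $r_m=f^\star_m\uparrow f^\star$ (by the preceding theorem), the right-hand side is bounded above by $f(\bu)-r_{m_0}=:C$, so $\lambda_{m,0}\le C$ and $\tr{\Lambda_{m,k}}\le C/\delta^k$ for every $k$. Each $\Lambda_{m,k}\succeq 0$ then lives in the finite-dimensional space $\bS^{q^k}$ with bounded trace, so a diagonal extraction yields a subsequence $\{m_j\}$ along which $\lambda_{m_j,0}$ and every $\Lambda_{m_j,k}$ converge, with limits $\Lambda_{\star,k}\in\bS^{q^k}_+$ inheriting positive semidefiniteness.

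Next I would pin down these limits by evaluating the same identity at a minimizer $\bu^\star$, where $f(\bu^\star)=f^\star$ and $G(\bu^\star)\succeq 0$, giving
\[
f^\star-r_m=\lambda_{m,0}+\sum_{k=1}^m\left\langle\Lambda_{m,k}, G(\bu^\star)^{\otimes k}\right\rangle,
\]
a sum of nonnegative terms whose total tends to $0$ since $r_m\uparrow f^\star$. Hence $\lambda_{m,0}\to 0$ along the whole sequence (so $\lambda_{\star,0}=0$), and for each fixed $N$ the partial sum $\sum_{k=1}^N\langle\Lambda_{m_j,k},G(\bu^\star)^{\otimes k}\rangle$ also tends to $0$, forcing $\langle\Lambda_{\star,k},G(\bu^\star)^{\otimes k}\rangle=0$ for every $k$. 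For the stated inequality I would fix $\bx\in\K$ and $N\in\N$, drop the nonnegative tail together with $\lambda_{m_j,0}\ge 0$ in the first identity to get $f(\bx)-r_{m_j}\ge\sum_{k=1}^N\langle\Lambda_{m_j,k},G(\bx)^{\otimes k}\rangle$ for $m_j\ge N$, let $j\to\infty$ to obtain $f(\bx)-f^\star\ge\sum_{k=1}^N\langle\Lambda_{\star,k},G(\bx)^{\otimes k}\rangle$, and finally let $N\to\infty$; the limiting series converges because its partial sums are nonnegative, nondecreasing and bounded above by $f(\bx)-f^\star$.

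For part (ii), I would exploit the complementarity $\langle\Lambda_{\star,k},G(\bu^\star)^{\otimes k}\rangle=0$ just obtained. Substituting $\Lambda_{\star,k}=\sum_{\ell=1}^{s_k}\bv^{(k,\ell)}(\bv^{(k,\ell)})^{\intercal}$ yields
\[
0=\tr{\Lambda_{\star,k}G(\bu^\star)^{\otimes k}}=\sum_{\ell=1}^{s_k}(\bv^{(k,\ell)})^{\intercal}G(\bu^\star)^{\otimes k}\bv^{(k,\ell)}.
\]
Because $\bu^\star\in\K$ gives $G(\bu^\star)\succeq 0$, and the Kronecker power of a PSD matrix is PSD, every summand is nonnegative and must therefore vanish individually. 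Since $(\bv)^{\intercal}M\bv=0$ with $M\succeq 0$ forces $M^{1/2}\bv=0$ and hence $M\bv=0$, I conclude $G(\bu^\star)^{\otimes k}\bv^{(k,\ell)}=0$ for all $\ell\in[s_k]$ and $k\in\N$.

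The main obstacle I anticipate is the passage to the infinite series in part (i): one must carefully combine the Cantor diagonal extraction with the two-stage limit (first $j\to\infty$ at fixed truncation $N$, then $N\to\infty$), and justify that discarding the high-order tail is legitimate because every discarded term is nonnegative on $\K$. The Slater condition $G(\bu)\succ 0$ is precisely what bounds $\tr{\Lambda_{m,k}}$ and makes the extraction possible, while evaluation at the minimizer $\bu^\star$ simultaneously delivers $\lambda_{\star,0}=0$ and the complementarity relation that drives part (ii).
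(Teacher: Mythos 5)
Your proof is correct and follows essentially the same route as the paper's: evaluate the representation identity at the Slater point $\bu$ to bound $\lambda_{m,0}$ and $\tr{\Lambda_{m,k}}$, extract an entrywise-convergent subsequence, pass to the limit on $\K$, and evaluate at the minimizer $\bu^{\star}$ to obtain both $\lambda_{\star,0}=0$ and the complementarity $\left\langle\Lambda_{\star,k},G(\bu^{\star})^{\otimes k}\right\rangle=0$ that yields (ii). The only differences are cosmetic technical devices: you use Cantor diagonal extraction and an explicit two-stage (fixed truncation $N$, then $N\to\infty$) limit, where the paper invokes Tychonoff's theorem and Fatou's lemma for the same purposes.
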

\begin{proof}
%    By Assumption \ref{assump::0}, let $\bu$ be an interior point of $\K$. 
    As $G(\bu)\succ 0$, $G(\bu)^{\otimes k}\succ 0$ for all $k\in\N$. For any $m\ge m_0$, it holds 
    \begin{equation}\label{eq::rm}
    f(\bu)-r_m=\lambda_{m,0}+\sum_{k\in [m]} \left\langle\Lambda_{m, k}, G(\bu)^{\otimes k}\right\rangle.
    \end{equation}
    As $\{r_m\}_{m\ge m_0}$ is nondecreasing, we have 
    \[
    \lambda_{m,0}\le f(\bu)-r_{m}\le f(\bu)-r_{m_0}\quad \text{and}\quad
    \left\langle\Lambda_{m, k}, G(\bu)^{\otimes k}\right\rangle \le f(\bu)-r_{m}\le f(\bu)-r_{m_0}
    \]
    for all $m\ge m_0$ and $k\in[m]$.
    In particular, the sequence $\{\lambda_{m,0}\}_{m\ge m_0}$ is bounded. 
    For each $k\in\N$, denote by $\lambda_{\min}^{(k)}(\bu)$ the smallest eigenvalue of $G(\bu)^{\otimes k}$.
    Then, $\lambda_{\min}^{(k)}(\bu)>0$ for all $k\in\N$ and 
    \[
    f(\bu)-r_{m_0}\ge \left\langle\Lambda_{m, k}, G(\bu)^{\otimes k}\right\rangle\ge 
    \left\langle\Lambda_{m, k}, \lambda_{\min}^{(k)}(\bu) I_{q^k}\right\rangle
    =\lambda_{\min}^{(k)}(\bu)\tr{\Lambda_{m, k}}. 
    \]
    For each $k\in\N$, as $\Lambda_{m, k}\succeq 0$, the matrices $\{\Lambda_{m, k}\}_{m\ge m_0}$
    (let $\Lambda_{m, k}=0$ if $m<k$)
    are entry-wise bounded. Denote by $\ve(\Lambda_{m, k})$ the row vectorization of the matrix
    $\Lambda_{m, k}$. For each $m\ge m_0$, denote by $\bw_m$ the element of the space $l_{\infty}$ obtained by 
    completing the vector $(\lambda_{m,0}, \ve(\Lambda_{m, k}), k\in[m])$ with infinitely many zeros. 
    Since the vectors in $\{\bw_m\}_{m\ge m_0}$ are entry-wise bounded, by Tychonoff's theorem, 
    there is a convergent subsequence $\{\bw_{m_j}\}_{j\in\N}$. From the limiting point, we
    extract a real number $\lambda_{\star,0}$ and matrices $\Lambda_{\star, k}\in\bS^{q^k}$ such that 
    $\lambda_{m_j,0}\to \lambda_{\star, 0}$ and $\Lambda_{m_j,k}\to \Lambda_{\star,k}$
    for all $k\in\N$. By the entry-wise convergence, we have $\lambda_{\star,0}\ge 0$ and 
    $\Lambda_{\star,k}\succeq 0$ for all $k\in\N$. As Assumption \ref{assump::0} holds, 
    $\lim_{j\to\infty}r_{m_j}=f^{\star}$ by Theorem \ref{th::main1}. Then,
    for any $\bx\in\K$, by Fatou's lemma, we have 
    \[
    f(\bx)-f^{\star}=\lim_{j\to\infty}(f(\bx)-r_{m_j})
    =\liminf_{j\to\infty}(f(\bx)-r_{m_j})\ge 
    \lambda_{\star,0}+
    \sum_{k\in \N} \left\langle\Lambda_{\star, k}, G(\bx)^{\otimes k}\right\rangle. 
    \]
    Let $\bu^{\star}$ be a minimizer of ($\mathbb{P}$). Then, 
    \[
        0=f(\bu^{\star})-f^{\star}\ge 
        \lambda_{\star,0}+
    \sum_{k\in \N} \left\langle\Lambda_{\star, k}, G(\bu^{\star})^{\otimes k}\right\rangle.
        \]
    As $\lambda_{\star,0}\ge 0$ and  $\Lambda_{\star,k}\succeq 0$ for all $k\in\N$, 
    we must have $\lambda_{\star,0}=0$ and 
    $\left\langle\Lambda_{\star, k}, G(\bu^{\star})^{\otimes k}\right\rangle= 0$ for all $k\in\N$.
    This completes the proof of (i) and (ii).
\end{proof}

\subsection{Exactness of the SDP relaxations $\{(\mathbb{H}_m)\}$}
We say that the relaxations \{($\mathbb{H}_m$)\} are exact if $f_m^{\star}=f^{\star}$ for some $m\in\N$.
Similar to the LP-relaxation for scalar polynomial optimization \cite[Proposition 3.2]{LasserreLP2005},
we will show that the SDP relaxations $\{(\mathbb{H}_m)\}$ can not be exact in general.
\begin{lemma}\label{lem::AB}
    For $A, B\in\RR^{s\times t}$, if the null space $\nul(A)\subseteq\nul(B)$, then  
    $\nul(A^{\otimes k})\subseteq\nul(B^{\otimes k})$ for any $k\in\N$.
\end{lemma}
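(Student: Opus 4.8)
The plan is to reduce the claim to a single algebraic identity for Kronecker products, exploiting the fact that a null-space containment is equivalent to a matrix factorization. First I would observe that $\nul(A)\subseteq\nul(B)$ is equivalent, upon taking orthogonal complements, to the containment of row spaces: the row space of $B$ is contained in that of $A$. Since $A$ and $B$ share the size $s\times t$, this means each row of $B$ is a linear combination of the rows of $A$, so there exists a matrix $C\in\RR^{s\times s}$ with $B=CA$.

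Next I would lift this factorization to the Kronecker powers by means of the mixed-product property $(M_1N_1)\otimes(M_2N_2)=(M_1\otimes M_2)(N_1\otimes N_2)$. Applying it $k-1$ times yields
\[
B^{\otimes k}=(CA)^{\otimes k}=C^{\otimes k}A^{\otimes k},
\]
where $C^{\otimes k}\in\RR^{s^k\times s^k}$ and $A^{\otimes k}\in\RR^{s^k\times t^k}$, so the sizes are compatible at every stage.

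The conclusion is then immediate: for any $\bv\in\RR^{t^k}$ with $A^{\otimes k}\bv=0$ we have $B^{\otimes k}\bv=C^{\otimes k}A^{\otimes k}\bv=0$, and hence $\nul(A^{\otimes k})\subseteq\nul(B^{\otimes k})$ for every $k\in\N$.

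I expect the only step carrying any content to be the first one, namely the translation of the null-space condition into the factorization $B=CA$. Once this standard linear-algebra equivalence is in hand, the Kronecker structure is preserved automatically by the mixed-product property, so I anticipate no substantive obstacle beyond the dimension bookkeeping in the Kronecker products.
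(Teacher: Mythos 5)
Your proof is correct, but it takes a genuinely different route from the paper. The paper's argument rests on a structural formula (cited from a matrix handbook) for the null space of a Kronecker power, namely
\[
\nul(A^{\otimes k})=\nul(A)\otimes \RR^t \otimes\cdots\otimes \RR^t+
    \RR^t\otimes\nul(A)\otimes\cdots\otimes\RR^t+\cdots+\RR^t\otimes\RR^t\otimes\cdots
    \otimes \nul(A),
\]
applied to both $A$ and $B$: since $\nul(A)\subseteq\nul(B)$, each summand for $A$ sits inside the corresponding summand for $B$, and the containment follows. You instead convert the hypothesis into the factorization $B=CA$ (via row-space containment, or equivalently the standard well-definedness argument for a left factor), and then propagate it through Kronecker powers by the mixed-product property to get $B^{\otimes k}=C^{\otimes k}A^{\otimes k}$, from which the containment is immediate. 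Your route is more self-contained and elementary: it needs only two textbook facts (null-space containment $\Leftrightarrow$ left factorization, and the mixed-product rule), whereas the paper's proof outsources the nontrivial decomposition of $\nul(A^{\otimes k})$ to a reference. The paper's approach, on the other hand, makes the null space of the Kronecker power explicit, which is mildly more informative but is not needed for this lemma or for its use in Proposition \ref{prop::notexact}. Both proofs are complete and correct.
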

\begin{proof}
For any $k\in\N$, we have (see \cite[14.8]{SM2013})
    \[
    \begin{aligned}
        \nul(A^{\otimes k})&=\nul(A)\otimes \RR^t \otimes\cdots\otimes \RR^t+
    \RR^t\otimes\nul(A)\otimes\cdots\otimes\RR^t+\RR^t\otimes\RR^t\otimes\cdots
    \otimes \nul(A),\\
    \nul(B^{\otimes k})&=\nul(B)\otimes \RR^t \otimes\cdots\otimes \RR^t+
    \RR^t\otimes\nul(B)\otimes\cdots\otimes\RR^t+\RR^t\otimes\RR^t\otimes\cdots
    \otimes \nul(B).
    \end{aligned}
    \]
    As $\nul(A)\subseteq\nul(B)$, it is clear that $\nul(A^{\otimes k})\subseteq\nul(B^{\otimes k})$.
\end{proof}

\begin{proposition}\label{prop::notexact}
Let $\bu^{\star}$ be a minimizer of $(\mathbb{P})$ and $G(\bx)$ be linear (resp., nonlinear). 
If there exists a nonoptimal point $\bu\in\K$ of such that 
$\nul(G(\bu^{\star}))\subseteq \nul(G(\bu))$ (resp., $\nul(\wt{G}(\bu^{\star}))\subseteq \nul(\wt{G}(\bu))$), 
then no relaxation $(\mathbb{H}_m)$ is exact.
In particular, if $G(\bu^{\star})\succ 0$ (resp., $G(\bu^{\star})\succ 0$ and $I_q-G(\bu^{\star})\succ 0$), 
then no relaxation $(\mathbb{H}_m)$ is exact.
\end{proposition}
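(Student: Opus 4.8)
The plan is a proof by contradiction: I assume some relaxation $(\mathbb{H}_m)$ is exact, i.e. $f^{\star}_m=f^{\star}$, and show this forces the nonoptimal $\bu$ to satisfy $f(\bu)=f^{\star}$. The engine of the argument is a domination inequality for the Kronecker powers. Since $\bu^{\star},\bu\in\K$, both $G(\bu^{\star})$ and $G(\bu)$ are symmetric PSD. The hypothesis $\nul(G(\bu^{\star}))\subseteq\nul(G(\bu))$ combined with Lemma~\ref{lem::AB} yields $\nul(G(\bu^{\star})^{\otimes k})\subseteq\nul(G(\bu)^{\otimes k})$ for every $k$; taking orthogonal complements gives the range inclusion $\mathrm{range}(G(\bu)^{\otimes k})\subseteq\mathrm{range}(G(\bu^{\star})^{\otimes k})$. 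From this I would deduce a constant $c_k>0$ with $G(\bu)^{\otimes k}\preceq c_k\,G(\bu^{\star})^{\otimes k}$: on the common range, $G(\bu^{\star})^{\otimes k}$ is bounded below by its smallest positive eigenvalue times the projector onto that range, which in turn dominates $G(\bu)^{\otimes k}$ after scaling.

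With these constants in hand, exactness supplies, for each $r<f^{\star}$ (or $r=f^{\star}$ if the supremum is attained), a representation $f(\bx)-r=\lambda_0+\sum_{k=1}^m\langle\Lambda_k,G(\bx)^{\otimes k}\rangle$ with $\lambda_0\in\RR_+$ and $\Lambda_k\in\bS^{q^k}_+$. Evaluating at $\bu^{\star}$ gives $f^{\star}-r=\lambda_0+\sum_k\langle\Lambda_k,G(\bu^{\star})^{\otimes k}\rangle$, a sum of nonnegative terms. Evaluating at $\bu$ and using $\Lambda_k\succeq 0$ together with $G(\bu)^{\otimes k}\preceq c_k G(\bu^{\star})^{\otimes k}$, each term obeys $\langle\Lambda_k,G(\bu)^{\otimes k}\rangle\le c_k\langle\Lambda_k,G(\bu^{\star})^{\otimes k}\rangle$, so $f(\bu)-r\le C\,(f^{\star}-r)$ with $C=\max\{1,c_1,\ldots,c_m\}$. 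Letting $r\uparrow f^{\star}_m=f^{\star}$ yields $f(\bu)\le f^{\star}$, and since $f^{\star}$ is the minimum over $\K$ we get $f(\bu)=f^{\star}$, contradicting nonoptimality. The nonlinear case is verbatim after replacing $G$ by $\wt{G}(\bx)=\diag(G(\bx),I_q-G(\bx))$ throughout. The ``in particular'' statements are then immediate: $G(\bu^{\star})\succ 0$ (resp. $G(\bu^{\star})\succ 0$ and $I_q-G(\bu^{\star})\succ 0$) forces $\nul(G(\bu^{\star}))=\{\mathbf{0}\}$ (resp. $\nul(\wt{G}(\bu^{\star}))=\{\mathbf{0}\}$), which sits inside every null space, so the hypothesis holds for any nonoptimal $\bu\in\K$.

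The point I expect to need the most care is the possible non-attainment of the supremum defining $f^{\star}_m$: the cone $\mH_m(G)$ is the image of a spectrahedral cone under a linear map and may fail to be closed, so there need not be a single representation realizing $r=f^{\star}$. Routing the estimate through the domination inequality and the uniform constant $C$ sidesteps this, because for each fixed $r$ I only compare the values at $\bu$ and $\bu^{\star}$ and never need the exact vanishing $\langle\Lambda_k,G(\bu^{\star})^{\otimes k}\rangle=0$, which holds only in the limit. Should one instead assume attainment, the shorter route is available: at $r=f^{\star}$ every summand $\langle\Lambda_k,G(\bu^{\star})^{\otimes k}\rangle$ vanishes, forcing $\Lambda_k G(\bu^{\star})^{\otimes k}=0$, whence each vector in a rank-one decomposition of $\Lambda_k$ lies in $\nul(G(\bu^{\star})^{\otimes k})\subseteq\nul(G(\bu)^{\otimes k})$, and $\langle\Lambda_k,G(\bu)^{\otimes k}\rangle=0$ follows directly.
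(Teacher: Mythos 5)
Your proof is correct, and it takes a genuinely different route from the paper's. The paper argues under the implicit assumption that exactness means \emph{attainment}: it starts from a representation $f(\bx)-f^{\star}=\lambda_{m,0}+\sum_{k\in[m]}\langle\Lambda_{m,k},G(\bx)^{\otimes k}\rangle$, evaluates at $\bu^{\star}$ to force $\lambda_{m,0}=0$ and $\langle\Lambda_{m,k},G(\bu^{\star})^{\otimes k}\rangle=0$, then uses a rank-one decomposition of each $\Lambda_{m,k}$ together with Lemma~\ref{lem::AB} to conclude $\langle\Lambda_{m,k},G(\bu)^{\otimes k}\rangle=0$, yielding the contradiction $0<f(\bu)-f^{\star}=0$ --- this is exactly the ``shorter route'' you sketch in your final paragraph. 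Your main argument instead works with the literal definition of exactness, $f^{\star}_m=f^{\star}$ as equality of optimal values: you upgrade the null-space inclusion (via Lemma~\ref{lem::AB}, range inclusion, and spectral bounds) to a domination $G(\bu)^{\otimes k}\preceq c_k\,G(\bu^{\star})^{\otimes k}$, obtain the uniform estimate $f(\bu)-r\le C(f^{\star}-r)$ for every feasible $r$ (using that the feasible set of $(\mathbb{H}_m)$ is downward closed, so every $r<f^{\star}_m$ admits a representation), and let $r\uparrow f^{\star}$. This closes a real subtlety the paper glosses over: $\mH_m(G)$ is the image of a product of PSD cones under a linear map and need not be closed, so $f^{\star}_m=f^{\star}$ does not by itself guarantee that $f-f^{\star}\in\mH_m(G)$. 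The trade-off is length: the paper's attainment-based argument is a few lines, while yours needs the projector/smallest-positive-eigenvalue construction of the constants $c_k$ (which is valid: both matrices are PSD, nested ranges give nested projectors, and $\Lambda_k\succeq 0$ preserves the Loewner inequality under the trace pairing). Your treatment of the nonlinear case and of the ``in particular'' statements matches the paper's.
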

\begin{proof}
We only need to consider the case when $G(\bx)$ is linear.
Suppose to the contrary that the $m$-th relaxation \{($\mathbb{H}_m$)\} is exact, i.e., the
representation
\[
f(\bx)-f^{\star}=\lambda_{m,0}+\sum_{k\in [m]} \left\langle\Lambda_{m, k}, G(\bx)^{\otimes k}\right\rangle,
\]
holds for some $\lambda_{m,0}\in\RR_+$ and $\Lambda_{m, k}\in\bS^{q^k}_+$, $k\in[m]$. Then,
\[
0=f(\bu^{\star})-f^{\star}=\lambda_{m,0}+\sum_{k\in [m]} \left\langle\Lambda_{m, k}, G(\bu^{\star})^{\otimes k}\right\rangle,
\]
which implies that $\lambda_{m,0}=0$ and $\left\langle\Lambda_{m, k}, G(\bu^{\star})^{\otimes k}\right\rangle$
for all $k\in[m]$. For any decomposition  
$\Lambda_{m, k}=\sum_{\ell=1}^{s_k} \bv^{(k,\ell)}(\bv^{(k,\ell)})^{\intercal}$ with 
$\bv^{(k,\ell)}\in\RR^{q^k}$, it holds that 
$G(\bu^{\star})^{\otimes k} \bv^{(k,\ell)}=0$ for all $\ell\in[s_k]$, $k\in[m]$. 
Since $\nul(G(\bu^{\star}))\subseteq \nul(G(\bu))$, by Lemma \ref{lem::AB}, we have 
$\nul(G(\bu^{\star})^{\otimes k})\subseteq \nul(G(\bu)^{\otimes k})$ for all $k\in\N$.
Thus, $G(\bu)^{\otimes k} \bv^{(k,\ell)}=0$ for all $\ell\in[s_k]$, $k\in[m]$, and hence 
$\left\langle\Lambda_{m, k}, G(\bu)^{\otimes k}\right\rangle=0$
for all $k\in[m]$. Therefore, as $\bu\in\K$ is nonoptimal, we have
\[
0<f(\bu)-f^{\star}=\lambda_{m,0}+\sum_{k\in [m]} \left\langle\Lambda_{m, k}, G(\bu)^{\otimes k}\right\rangle=0,
\]
a contradiction. 
\end{proof}

\begin{remark}\label{rmk::exact}
When $G(\bx)$ is linear, the null space $\nul(G(\bx))$ is constant over the relative interior 
of any face of $\K$ (\cite[Corollary 1]{RG1995}).  
Thus when $G(\bx)$ is linear, Proposition \ref{prop::notexact} implies that the only case where the relaxations 
\{($\mathbb{H}_m$)\} may be exact is if every global minimizer of ($\mathbb{P}$) is a 
$0$-dimensional face of $\K$ or if $f(\bx)-f^{\star}$ vanishes on every face of $\K$ that 
contains a global minimizer of ($\mathbb{P}$) as a relative interior. 
\end{remark}

In the following examples, we implement the relaxations \{($\mathbb{H}_m$)\} using {\sf Yalmip} \cite{YALMIP}
and solve the resulting SDPs by {\sf Mosek} \cite{mosek}. 
To formulate ($\mathbb{H}_m$) as semidefinite program, 
we need to express $f(\bx)-r$ as a polynomial in $\mH_m(G)$. To state the identity of the two polynomials, 
it is inefficient to match their coefficients since expanding a polynomial in 
$\mH_m(G)$ symbolically to get the coefficients with respect to the monomial basis could be 
computationally prohibitive. Instead, if $G(\bx)$ is linear (resp., nonlinear), 
we equate their values on $\binom{n+k}{n}$ $\left(\text{resp.}, \binom{n+k\cdot\deg(G)}{n}\right)$ generic 
points randomly generated on the box $[-1, 1]^n$. 
All computations in the sequel were performed and timed on a desktop 
with 8-Core Intel i7 3.8 GHz CPUs and 8 GB RAM.

\begin{example}\label{ex::1}{
Consider the optimization problem with linear PMI constraint
\begin{equation}\label{eq::P1}
f^{\star}:=\min_{\bx\in\K} f(\bx)\quad\text{s.t. }\ \K=\left\{\bx\in\RR^3 \colon 
\left[
\begin{array}{cc}
  x_3+x_2   & 2x_3-x_1 \\
   2x_3-x_1  &  x_3-x_2
\end{array}
\right]\succeq 0, \ x_3\le 1\right\}. 
\end{equation}
The spectrahedron $\K$ is depicted in Figure \ref{fig::1}. As shown, the faces of $\K$ 
come in dimensions $0$, $1$, $2$ and $3$. The $0$-dimensional faces consist of the origin and 
the points on the circle defined by $(x_1-2)^2+x_2^2=1$ and $x_3=1$ (the blue circle in Figure \ref{fig::1}).
The segments connecting the origin and the points on this circle 
(the red edges in Figure \ref{fig::1}) form the $1$-dimensional faces of $\K$. 
The $2$-dimensional face of $\K$ is the planar disk bounded by the blue circle and the 
$3$-dimensional face of $\K$ is itself.

Clearly, Assumption \ref{assump::0} holds for $\K$.
By Remark \ref{rmk::exact}, the only case where the relaxations 
\{($\mathbb{H}_m$)\} may be exact is if every global minimizer of \eqref{eq::P1} is either the origin
or a point on the blue circle,  or if $f(\bx)-f^{\star}$ vanishes on the edge of $\K$ that 
contains a global minimizer as a relative interior. 

Now we consider four objective functions 
\[
\begin{aligned}
&f_1(\bx)=\left(x_1+\frac{1}{2}\right)^2+x_2^2+\left(x_3-\frac{3}{2}\right)^2,\
&&f_2(\bx)=(x_1-2)^2+x_2^2+(x_3-1-\sqrt{2})^2,\\
&f_3(\bx)=(x_1+1)^2+x_2^2+(x_3-1)^2, \ 
&&f_4(\bx)=x_1^2+x_2^2+(x_3-2)^2.
\end{aligned}
\]
It is easy to see that the functions $f_1,\ldots,f_4$ have the same minimum $f^{\star}=2$ over $\K$, 
which is achieved at their respective unique minimizers
$\bu^{(1)}=(1/2, 0, 1/2)$, $\bu^{(2)}=(2, 0, 1)$, $\bu^{(3)}=(0, 0, 0)$, $\bu^{(4)}=(1, 0, 1)$.

As $\bu^{(1)}$ and $\bu^{(2)}$ lie in the relative interior of the corresponding faces of $\K$,
the relaxations \{($\mathbb{H}_m$)\} cannot be exact for $f_1$ and $f_2$. In fact, for $f_1$, we
obtain the lower bounds
\[
f^{\star}_2=1.5000,\ f^{\star}_3=1.8333,\ f^{\star}_4=1.8333,\ f^{\star}_5=1.9000,
f^{\star}_6=1.9000,\ f^{\star}_7=1.9286.
\]
For $f_2$, we obtain the lower bounds
\[
f^{\star}_2=1.0000,\ f^{\star}_3=1.5000,\ f^{\star}_4=1.6667,\ f^{\star}_5=1.7500,
f^{\star}_6=1.8019,\ f^{\star}_7=1.9657.
\]

Since $\bu^{(3)}$ and $\bu^{(4)}$ are $0$-dimensional faces of $\K$, 
the relaxations \{($\mathbb{H}_m$)\} may be exact for $f_3$ and $f_4$. The numerical results suggest that
it is indeed the case. In fact, we have $f_2^{\star}=2.0000$ for both $f_3$ and $f_4$. 

    \begin{figure}\label{fig::1}
\centering
\scalebox{0.5}{
\includegraphics[trim=10 230 10 230,clip]{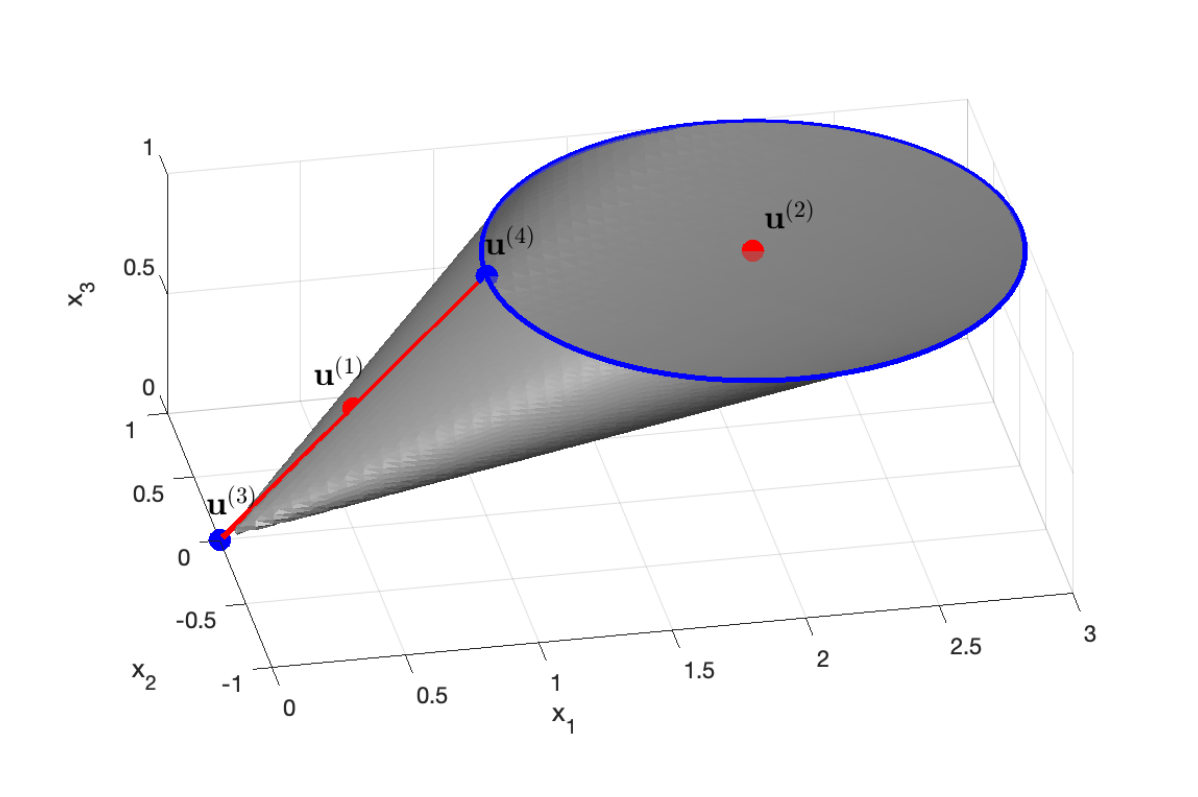}}
\caption{The feasible set $\K$ of Problem \eqref{eq::P1}.}
\end{figure}}
\end{example}

\begin{example}{\rm
    Consider the optimization problem with nonlinear PMI constraint
\begin{equation}\label{eq::P2}
f^{\star}:=\min_{\bx\in\K} f(\bx)\quad\text{s.t. }\ \K=\left\{\bx\in\RR^2 \colon 
H(\bx):=\left[
\begin{array}{cc}
  x_2-\frac{x_1}{2}   & x_1x_2 \\
   x_1x_2  &  2x_1-x_2
\end{array}
\right]\succeq 0 \right\}. 
\end{equation}
Clearly, a point $\bx\in\RR^2$ is feasible if and only if it satisfies 
\[
g_1(\bx)=x_2-\frac{x_1}{2}\ge 0, \quad g_2(\bx)=2x_1-x_2\ge 0, \quad
g_3(\bx)=\frac{5}{2}x_1x_2-x_1^2-x_2^2-x_1^2x_2^2\ge 0.
\]
In fact, it is easy to see that the set $\K$ is the compact region bounded the 
curve $g_3(\bx)=0$ in  the first quadrant, as shown in Figure \ref{fig::2}. 

Note that the sum of two eigenvalues of $H(\bx)$ over $\K$ is bounded by the trace $\frac{3x_1}{2}$. 
Since $\frac{\partial g_3}{\partial x_2}(\bv)=0$ where $\bv=\left(\frac{3}{4}, \frac{3}{5}\right)$, 
the maximum of $\frac{3x_1}{2}$ over $\K$ is $\frac{9}{8}$ achieved at $\bv$.
As $\lambda_{\max}(H(\bv))=\frac{9}{8}$,
we can see that the largest eigenvalue of $H(\bx)$ over $\K$ is $\frac{9}{8}$.
Let $G(\bx)=\frac{8}{9}H(\bx)$, then $G(\bx)\succeq 0$ defines the same set $\K$ and 
Assumption \ref{assump::1} is satisfied. 

Consider the objective function 
\[
f(x_1, x_2)=x_1^2(x_1-1)^2+x_2^2(x_2-1)^2+(x_1-x_2)^2.
\]
Since the Hessian matrix of $f$ is indefinite at $\bu^{(0)}=\left(\frac{1}{2}, \frac{1}{2}\right)\in\K$,
$f$ is a nonconvex polynomial. Clearly, the global minimum of $f(\bx)$ over $\RR^2$ is 0,
achieved at $\bu^{(1)}=(0, 0)\in\K$ and $\bu^{(2)}=(1, 1)\not\in\K$. Hence, 
$f^{\star}=0$ and $\bu^{(1)}$ is the only minimizer of Problem \eqref{eq::P2}. 
The numerical result $f_4^{\star}=3.0415\times 10^{-12}$ suggests that 
the $4$-th relaxation $(\mathbb{H}_4)$ is exact. 

Next we replace the variables $\bx$ by $2\bx$ in the objective function and scale it by 
$\frac{1}{4}$.  That is, we let
\[
f(x_1, x_2)=x_1^2(2x_1-1)^2+x_2^2(2x_2-1)^2+(x_1-x_2)^2.
\]
Obviously, the optimal value remains $f^{\star}=0$, attained at two minimizers $\bu^{(0)}$
and $\bu^{(1)}$. Notice that $G(\bu^{(0)})\succ 0$ and $I_2-G(\bu^{(0)})\succ 0$. Then, 
by Proposition \ref{prop::notexact}, no relaxation ($\mathbb{H}_m$) is exact,  which is 
consistent with the numerical results
$f_4^{\star}=-0.0478$, $f_5^{\star}=-0.0274$ and $f_6^{\star}=-0.0195$.
\qed
    \begin{figure}\label{fig::2}
\centering
\scalebox{0.5}{
\includegraphics[trim=10 220 10 230,clip]{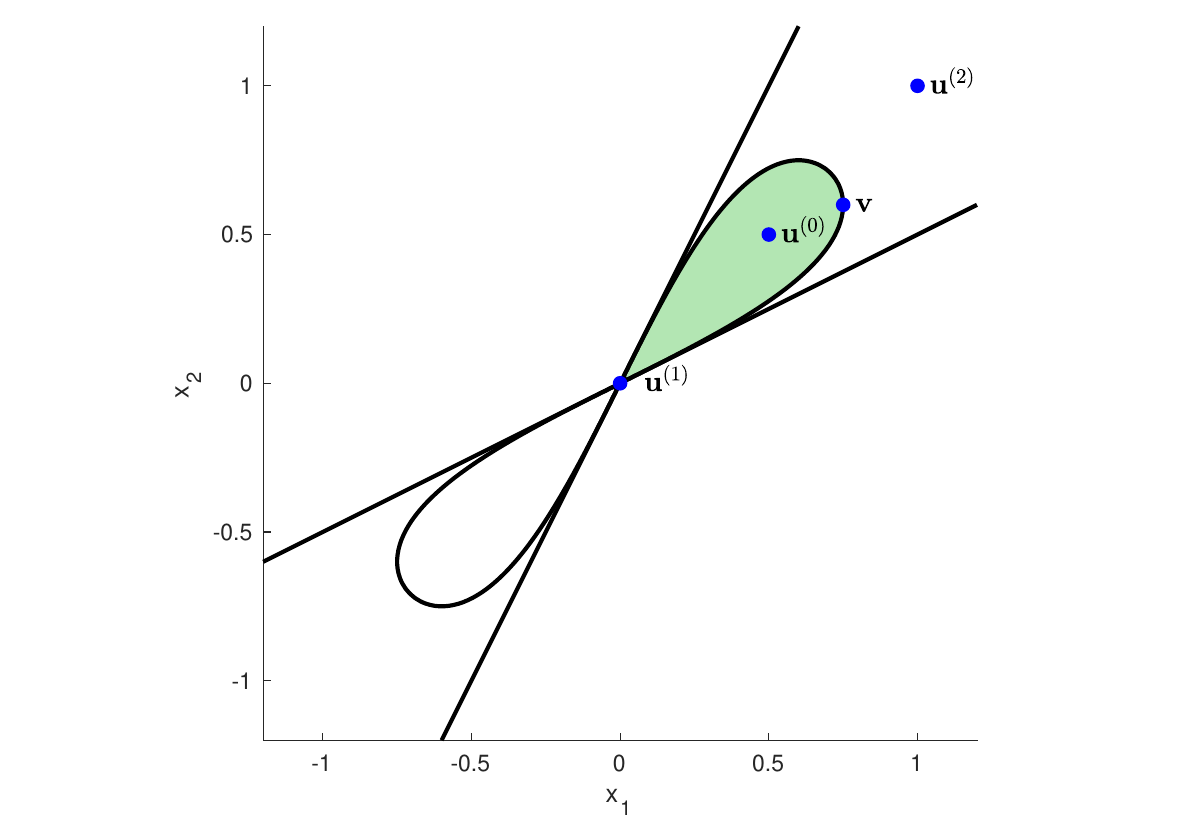}}
\caption{The feasible set $\K$ of Problem \eqref{eq::P2}.}
\end{figure}
}
\end{example}

\subsection{Sparse SDP relaxations}
    If the matrix $G(\bx)$ exhibits a block-diagonal structure, i.e., it holds that
    $G(\bx)=\diag(G_1(\bx), \ldots, G_t(\bx))$ for some $G_i(\bx)\in\bS[\bx]^{q_i}$, $q_i\in\N$, 
    $i\in[t]$, one can leverage the characterization of $\mathcal{H}(G)$ provided in Proposition 
    \ref{prop::diag}. Applying this to the relaxations $(\mathbb{H}_m)$ 
    allows the PSD matrices in the associated SDP problems 
    to be decomposed into smaller matrices, which reduces the computational burden. 
    
    Furthermore, if each monomial in $f(\bx)$ and each block in $G(\bx)$ 
    depends on overlapping variable subsets, then one can construct the sparse relaxations
    \begin{flalign*}
    \text{($\mathbb{H}^{\sps}_m$)} &&& f^{\sps}_m:=\sup_{r\in\RR}\ r \ \ 
    \text{s.t. }\  f(\bx)-r\in\sum_{i=1}^t\mH_m(G_i). &&
    \end{flalign*}
    
The asymptotic convergence of the optimal values $\{f^{\sps}_m\}$ as $m\to\infty$
can be established using Theorem \ref{th::main3}.
\begin{theorem}\label{th::sparseSDP}
    Suppose that Assumptions \ref{assump::vs} and \ref{assump::vs2} hold. Let
    $f=\sum_{i=1}^t f_i\in\RR[\bx]$ with some $f_i\in\RR[\bx(I_i)]$, $i\in[t]$. 
    Then, $f_m^{\sps} \uparrow f^{\star}$ as $m\to\infty$.     
\end{theorem}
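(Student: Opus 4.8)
The plan is to mirror the argument used for the dense relaxations $\{(\mathbb{H}_m)\}$, replacing the role of Theorems \ref{th::main1} and \ref{th::main2} by the sparse Positivstellensatz of Theorem \ref{th::main3}. I would first establish monotonicity together with the upper bound $f_m^{\sps}\le f^{\star}$, and then pin down the limit from below by invoking Theorem \ref{th::main3}.

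First I would observe that $\mH_m(G_i)\subseteq\mH_{m+1}(G_i)$ for every $i\in[t]$, whence $\sum_{i=1}^t\mH_m(G_i)\subseteq\sum_{i=1}^t\mH_{m+1}(G_i)$; thus the feasible region of $(\mathbb{H}_m^{\sps})$ enlarges with $m$ and $\{f_m^{\sps}\}$ is nondecreasing. For the upper bound, note that for every $\bx\in\K$ each diagonal block satisfies $G_i(\bx(I_i))\succeq 0$ (and, in the nonlinear case, $I_{q_i}-G_i(\bx(I_i))\succeq 0$ by Assumption \ref{assump::vs2}), so every element of $\mH_m(G_i)$ is nonnegative on $\K$. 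Hence if $f(\bx)-r=\sum_{i=1}^t h_i(\bx)$ with $h_i\in\mH_m(G_i)$, then $f(\bx)-r\ge 0$ on $\K$, which forces $r\le f^{\star}$ and therefore $f_m^{\sps}\le f^{\star}$ for all $m$.

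The core step is the matching lower bound. Fixing $\varepsilon>0$, I note that $f(\bx)-f^{\star}+\varepsilon>0$ on $\K$. Using the given decomposition $f=\sum_{i=1}^t f_i$ with $f_i\in\RR[\bx(I_i)]$, I would absorb the constant into the first summand, writing $f-f^{\star}+\varepsilon=(f_1-f^{\star}+\varepsilon)+\sum_{i=2}^t f_i$, which is again a sparse decomposition since $f_1-f^{\star}+\varepsilon\in\RR[\bx(I_1)]$. Under Assumptions \ref{assump::vs} and \ref{assump::vs2}, Theorem \ref{th::main3} then gives $f-f^{\star}+\varepsilon\in\sum_{i=1}^t\mH(G_i)$. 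Since each $\mH(G_i)=\cup_{m}\mH_m(G_i)$ is an increasing union, choosing $m$ large enough that every summand lies in its $m$-th truncation yields $f-(f^{\star}-\varepsilon)\in\sum_{i=1}^t\mH_m(G_i)$; hence $r=f^{\star}-\varepsilon$ is feasible for $(\mathbb{H}_m^{\sps})$ and $f_m^{\sps}\ge f^{\star}-\varepsilon$.

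Combining the two bounds gives $f^{\star}-\varepsilon\le f_m^{\sps}\le f^{\star}$ for all sufficiently large $m$, and with monotonicity this establishes $f_m^{\sps}\uparrow f^{\star}$. The argument is essentially routine once Theorem \ref{th::main3} is available, and I do not expect a genuine obstacle; the only point demanding a little care is the bookkeeping in the lower-bound step---ensuring that the shifted polynomial $f-f^{\star}+\varepsilon$ still admits a sparse decomposition compatible with the index sets $\{I_i\}_{i=1}^t$, so that the hypotheses of Theorem \ref{th::main3} are met verbatim.
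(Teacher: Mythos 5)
Your proof is correct and is exactly the argument the paper intends: the paper omits the details, noting only that the convergence ``can be established using Theorem \ref{th::main3}'', and your write-up mirrors the paper's proof of the dense-case convergence theorem (monotonicity from $\mH_m(G_i)\subseteq\mH_{m+1}(G_i)$, the upper bound from nonnegativity of $\sum_{i=1}^t\mH_m(G_i)$ on $\K$, and the lower bound from Theorem \ref{th::main3} applied to $f-f^{\star}+\varepsilon$). Your extra care in absorbing the constant $-f^{\star}+\varepsilon$ into one summand so that the shifted polynomial still satisfies the sparse-decomposition hypothesis is a detail the paper glosses over, and it is handled correctly.
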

    
%     Under Assumptions \ref{assump::vs} and \ref{assump::vs2}, Theorem \ref{th::main3} 
%     guarantees the asymptotic convergence $f_m^{\sps} \uparrow f^{\star}$ as $m\to\infty$. 
     The asymptotic and exactness properties obtained for $(\mathbb{H}_m)$ extend analogously 
     to $(\mathbb{H}^{\sps}_m)$. We omit the details for simplicity.
     
     As mentioned in Remark \ref{rmk::sparse}, the resulting SDP problem for $(\mathbb{H}^{\sps}_m)$ 
     is of a much smaller size than that of $(\mathbb{H}_m)$. 
     This key advantage allows $(\mathbb{H}^{\sps}_m)$ to be applied to problems of 
     a significantly larger scale that possess the requisite sparse structure.

\begin{example}\label{ex::4}{\rm
    Consider the optimization problem  ($\mathbb{P}$) where 
    \[
    f(\bx)=\sum_{i=1}^n(x_i-1)^2,\quad G(\bx)=\diag(G_1(x_1, x_2),\ldots, G_{n-1}(x_{n-1}, x_n)),
    \]
    with 
    \[
    G_i(x_i, x_{i+1})=\left[
\begin{array}{cc}
  1-x_i   & x_{i+1} \\
   x_{i+1} &  1+x_i
\end{array}
\right],\quad i=1,\ldots,n-1.
    \]
    Equavilently, we have 
    \[
    \K=\{\bx\in\RR^n \colon 1-x_i^2-x_{i+1}^2\ge 0, \ i=1,\ldots,n-1\}.
    \]
    Since problem ($\mathbb{P}$) is convex, verification of the Karush–Kuhn–Tucker (KKT) optimality conditions yields the following results:
    \begin{enumerate}
        \item[(1)] For even $n$, the optimal value is $f^{\star}=\frac{n}{2}(3-2\sqrt{2})$, attained at 
        the minimizer $\bx^{\star}=\left(\frac{\sqrt{2}}{2},\ldots, \frac{\sqrt{2}}{2}\right)$;
        \item[(2)] For odd $n$, the optimal value is $f^{\star}=\frac{n+1}{2}(a-1)^2+\frac{n-1}{2}(b-1)^2$,
        attained at the minimizer 
        $\bx^{\star}=\left(a, b, a, b, \ldots, a, b, a\right)$,
        where $a$ and $b$ are positive and satisfy
        \[
        a=\frac{(n+1)b}{2b+n-1},\quad a^2+b^2=1.
        \]
    \end{enumerate}
    
Now by increasing the number $n$, we compare the computational performance of two relaxations  
$(\mathbb{H}_m)$ and $(\mathbb{H}^{\sps}_m)$. 
For $(\mathbb{H}_m)$, we use the characterization of $\mH(G)$ 
in Proposition~\ref{prop::diag} to reduce the computational burden. 
Numerically, we find that both of the relaxation $(\mathbb{H}_m)$ 
and $(\mathbb{H}^{\sps}_m)$ are exact for ($\mathbb{P}$) at the order $m=4$. 
Numerical results for $(\mathbb{H}_4)$ 
and $(\mathbb{H}^{\sps}_4)$ are presented in Table \ref{tab::2}, 
where the symbol `$-/-$' indicates that 
{\tt Mosek} runs out of memory. 

    \begin{table}[htb]\caption{Numerical results for the problem in Example \ref{ex::4}.}
\label{tab::2}
\centering
\begin{tabular}{cccccc} 
\midrule[0.8pt]
$(n, f^{\star})$  & $(3, 0.2367)$ & $(4, 0.3431)$ & $(5, 0.4167)$ & $(6, 0.5147)$& $(7, 0.5918)$\\
\midrule[0.4pt]
$f^{\star}_4/\text{time}$&$0.2367$/2.12s &$0.3431/$8.69s &$0.4167/$29.2s
&$0.5147/$106s &$0.5918/338s$\\
$f^{\sps}_4/\text{time}$&$0.2367/$0.90s &$0.3431/$0.97s &$0.4167/$1.10s
&$0.5147/$1.37s &$0.5918/$1.64s\\
\midrule[0.8pt]
\end{tabular}

\begin{tabular}{cccccc} 
\midrule[0.8pt]
$(n, f^{\star})$  & $(8, 0.6863)$ & $(9, 0.7653)$ & $(10, 0.8579)$ & $(15, 1.2827)$& $(20, 1.7157)$\\
\midrule[0.4pt]
$f^{\star}_4/\text{time}$&$0.6863$/54.5m &$-/-$ &$-/-$
&$-/-$ &$-/-$\\
$f^{\sps}_4/\text{time}$&$0.6863/$2.31s &$0.7653/$3.12s &$0.8579/$4.77s &$1.2828/$17.2s &$1.7158/$88.3s\\
\midrule[0.8pt]
\end{tabular}
\end{table}
    }
\end{example}

\subsection{Comparing with SOS-based SDP relaxations}

Applying the SOS-based Positivstellensatz in Theorem \ref{th::psatz} 
to the problem ($\mathbb{P}$), 
one can construct the following $m$-th SDP relaxation for ($\mathbb{P}$)
%\begin{equation}\label{eq::sos}
%\left\{
%\begin{aligned}
%    f^{\sos}_m:=\sup_{r,\sigma,\Sigma}&\ r \\
%    \text{s.t.}&\  f(\bx)-r=\sigma(\bx)+\left\langle \Sigma(\bx), G(\bx)\right\rangle,\\
%    &\ r\in\RR,\ \sigma\in\RR[\bx],\ \Sigma\in\bS[\bx]^q \text{ are SOS},\\
%    &\ \deg(\sigma), \deg(\left\langle \Sigma(\bx), G(\bx)\right\rangle)\le 2m.
%    \end{aligned}
%    \right.
%    \tag{P$m$}
%\end{equation}
\begin{flalign*}
\text{($\mathbb{P}_m$)} &&& \left\{
\begin{aligned}
    f^{\sos}_m:=\sup_{r,\sigma,\Sigma}&\ r \\
    \text{s.t.}&\  f(\bx)-r=\sigma(\bx)+\left\langle \Sigma(\bx), G(\bx)\right\rangle,\\
    &\ r\in\RR,\ \sigma\in\RR[\bx]\ \text{and}\ \Sigma\in\bS[\bx]^q \text{ are SOS},\\
    &\ \deg(\sigma), \deg(\left\langle \Sigma(\bx), G(\bx)\right\rangle)\le 2m.
    \end{aligned}
    \right. &&
\end{flalign*}
Under Assumption \ref{assump::archi}, by Theorem \ref{th::psatz}, we have 
$f^{\sos}_m\uparrow f^{\star}$ as $m\to\infty$. 

Now we 
%assume that $G(\bx)$ is linear and 
compare the size of the SDP relaxations of 
$(\mathbb{H}_{2m})$ and ($\mathbb{P}_m$), 
which use the same degree bound $2m$ in the representation of $f(\bx)-r$.
By equating two polynomials of degree at most $2m$, 
both $(\mathbb{H}_{2m})$ and ($\mathbb{P}_m$) have the same number $\binom{n+2m}{n}$ of 
equality constraints on their respective variables. 
The variables in the relaxation $(\mathbb{H}_{2m})$ are the nonnegative number $\lambda_0$
and $2m$ PSD matrices $\Lambda_k$, $k\in[2m]$. For each $k\in[2m]$, 
the size of $\Lambda_k$ is $q^k$ when $G(\bx)$ is linear and 
$(2q)^k$ when $G(\bx)$ is nonlinear. In comparison, the 
variables in the relaxation ($\mathbb{P}_m$) involve two PSD matrices of size 
$\binom{n+m}{n}$ and $q\binom{n+m}{n}$, associated with the SOS polynomial $\sigma(\bx)$ and 
the SOS polynomial matrix $\Sigma(\bx)$, respectively.  

The size of PSD matrices in both $(\mathbb{H}_{2m})$ and ($\mathbb{P}_m$)
grows exponentially, becoming prohibitively large as the order $m$ increases.
Although the exact convergence rate of $f_{2m}^{\star}$ to $f^{\star}$
remains unknown, we anticipate--based on the slow convergence observed in LP relaxations 
\cite{LasserreLP2002,LasserreLP2005} for scalar polynomial optimization derived from the 
Positivstellens\"atze of Handelman \eqref{eq::handelman} and Krivine–Stengle \eqref{eq::KS} 
--that the convergence $f_{2m}^{\star} \uparrow f^{\star}$ will also be slow.
Moreover, Huang and Nie \cite{HN2025} recently established the
finite convergence of ($\mathbb{P}_m$) under certain nondegeneracy and optimality
conditions. 
This suggests that SOS-based SDP relaxations ($\mathbb{P}_m$) are, in principle, 
superior to the relaxations $(\mathbb{H}_{2m})$.
However, it is important to observe that the size of PSD matrices 
in $(\mathbb{H}_{2m})$ depends only on the powers of $q$ and remains independent of 
the number $n$ of variables in ($\mathbb{P}$). In contrast, the sizes of two PSD 
matrices in ($\mathbb{P}_m$) approximately equal $n^m$ and $qn^m$.
Consequently,
for the problems ($\mathbb{P}$) with very large $n$ and relatively small $q$ 
that are significantly beyond the capability of the SDP relaxation ($\mathbb{P}_m$),
solving the alternatives $(\mathbb{H}_{2m})$ may still yield meaningful lower bounds of 
$f^{\star}$ in a reasonable time.

\begin{example}\label{ex::3}{\rm
   Consider the problem  ($\mathbb{P}$) where $n=\frac{q(q-1)}{2}$,
   $f(\bx)=-\sum_{i=1}^n x_i^2$ and 
   \[
   G(\bx)=[G_{ij}(\bx)]_{i,j\in[q]},\quad 
   G_{ij}(\bx)=\left\{
   \begin{array}{ll}
    1    &  \text{if}\ i=j,\\
    x_{\frac{(2q-i)(i-1)}{2}+j-i}    & \text{if}\ i<j,\\
    G_{ji}(\bx) & \text{if}\ i>j.\\
   \end{array}
   \right.
   \]
   For instance, when $n=3$, the problem becomes
   \[
   f^{\star}:=\min_{\bx\in\K} f(\bx)\quad\text{s.t. }\ \K=\left\{\bx\in\RR^3 \colon 
\left[
\begin{array}{ccc}
  1   & x_1 & x_2 \\
   x_1 & 1 & x_3\\
   x_2 &  x_3 & 1
\end{array}
\right]\succeq 0 \right\}. 
   \]
For any feasible point $\bx \in \K$, we have $x_i^2 \leq 1$ for all $i \in [n]$. 
Since the all-ones vector is feasible for ($\mathbb{P}$), it follows that $f^{\star} = -n$.
Now we apply the relaxations $(\mathbb{H}_{2m})$ and ($\mathbb{P}_m$) to ($\mathbb{P}$).
Like the relaxations $(\mathbb{H}_m)$, we implement ($\mathbb{P}_m$) using {\sf Yalmip}
and solve the resulting SDPs by {\tt Mosek}. 
To efficiently handle the representation of $f(\bx)-r$ required by ($\mathbb{P}_m$), 
we avoid the coefficient matching. 
Instead, we evaluate the polynomials at sufficiently many randomly generated generic 
points within the box $[-1, 1]^n$.

Note that we have $\deg(\Sigma)\le 2m-2$ in ($\mathbb{P}_m$).
Thus when $m=1$, the matrix $\Sigma$ in ($\mathbb{P}_m$) is constant. 
By comparing coefficients of $x_i^2$ ($i \in [n]$) in the representation of $f(\bx)-r$ 
in ($\mathbb{P}_m$), we observe that ($\mathbb{P}_m$) is infeasible for $m=1$. 
We therefore need to compute the second-order relaxation $(\mathbb{P}_2)$ in our experiments. 
However, $(\mathbb{P}_2)$ rapidly becomes numerically intractable as $q$ increases.
Thus, we introduce a variant $(\wt{\mathbb{P}}_1)$ of $(\mathbb{P}_1)$ 
where both $\sigma$ and $\Sigma$ in ($\mathbb{P}_m$) are constrained to degree 2, 
and denote by $\tilde{f}_{1}^{\sos}$ its optimal value. 
The size of the SDP problem $(\wt{\mathbb{P}}_1)$ lies between those of $(\mathbb{P}_1)$ 
and $(\mathbb{P}_2)$.  Numerically, we find that both of the relaxation $(\mathbb{H}_2)$ 
and $(\wt{\mathbb{P}}_1)$ are exact for ($\mathbb{P}$). 
Consequently, by increasing the number $q\in\N$ in ($\mathbb{P}$),
we compare the computational performance of two relaxations $(\wt{\mathbb{P}}_1)$
and $(\mathbb{H}_2)$. 
Note that as $q$ increases, the number $n$ of the variables becomes much larger than $q$. 
Numerical results are presented in Table \ref{tab::1}.
%where the symbol `$-/-$' indicates that  {\tt Mosek} runs out of memory. 

\qed
\begin{table}[htb]\caption{Numerical results for the problem in Example \ref{ex::3}.}
\label{tab::1}
\centering
\begin{tabular}{cccccc} 
\midrule[0.8pt]
$(q, n)$  & $(5, 10)$ & $(6, 15)$ & $(7, 21)$ & $(8, 28)$& $(9, 36)$\\
\midrule[0.4pt]
$\tilde{f}^{\sos}_1/\text{time}$&$-10.00$/2.12s &$-15.00/$14.4s &$-21.00/$120s
&$-28.00/$1h25m &$-/-$\\
$f^{\star}_1/\text{time}$&$-10.00/$0.91s &$-15.00/$1.28s &$-21.00/$1.67s
&$-28.00/$3.81s &$-36.00/$7.51s\\
\midrule[0.8pt]
\end{tabular}

\begin{tabular}{cccccc} 
\midrule[0.8pt]
$(q, n)$  & $(10, 45)$ & $(11, 55)$ & $(12, 66)$ & $(13, 78)$& $(14, 91)$\\
\midrule[0.4pt]
$\tilde{f}^{\sos}_1/\text{time}$&$-/-$ &$-/-$ &$-/-$
&$-/-$ &$-/-$\\
$f^{\star}_1/\text{time}$&$-45.00/$17.6s &$-55.00/$40.7s &$-66.00/$73.1s &$-78.00/$196s &$-91.00/$423s\\
\midrule[0.8pt]
\end{tabular}
\end{table}
   }
\end{example}

\section{Conclusions}\label{sec::con}
In this paper, we have established new non-SOS Positivstellensätze for polynomials positive on 
PMI-defined semialgebraic sets. By extending the classical theorems of Handelman and Krivine–Stengle 
to the matrix setting, we provided explicit representation forms for such polynomials without using 
sums of squares. 
Specifically, we showed that if is strictly positive on a compact PMI-defined set, 
then it can be written as a linear combination of Kronecker powers of the defining matrix 
(or its dilated form ) with PSD coefficient matrices. 
When the polynomial and the constraint matrix exhibit the correlative sparsity,
we proved that significantly simpler representations exist, 
drastically reducing the number and size of required PSD matrices.
By applying these algebraic certificates to polynomial optimization problems with PMI constraints, 
we constructed a new hierarchy of semidefinite programming relaxations. 
Although not exact in general, these relaxations remain computationally tractable even when 
the number of variables is large, provided the constaint matrix dimension is modest. 
For such problems, this offers a practical alternative to the traditional SOS-based hierarchies of 
SDP relaxations.

Several interesting questions remain for future work. 
It would be a primary challenge to derive degree bounds for the representations proposed 
in Theorems~\ref{th::main1} and~\ref{th::main2}.
Extending the present results to non-compact semialgebraic sets could broaden their applicability.
It would also be valuable to exploit additional sparsity structures 
(e.g., chordal sparsity \cite{zheng2019chordal}) in the polynomial matrix.
To tighten the proposed SDP relaxations in practice, one may incorporate bounded-degree SOS terms
\cite{LTY2017,WLT2018}, leading to a hybrid hierarchy.
Using computationally cheaper cones (e.g., diagonally dominant or scaled diagonally dominant matrices
\cite{AM2019}) 
instead of full PSD constraints in the proposed SDP relaxations to improve scalability while balancing 
solution quality.

\bibliographystyle{siamplain}
\bibliography{pcm}

\end{document}